\pgfplotsset{compat=1.15}
 \newtheorem{theorem}{Theorem}[section]
 \newtheorem{lemma}[theorem]{Lemma}
  \newtheorem{claim}[theorem]{Claim}
 \newtheorem{conjecture}[theorem]{Conjecture}
 \newtheorem{question}[theorem]{Question}
 \theoremstyle{definition}
\begin{document}
\title{Component behaviour and excess of random bipartite graphs near the critical point$^{\dagger}$}
\author{Tuan Anh Do$^{\ddagger}$, Joshua Erde$^{\ddagger}$, Mihyun Kang$^{\ddagger}$, and Michael Missethan$^{\ddagger}$}

\thanks{$^{\dagger}$An extended abstract of this paper has been published in the Proceedings of the European Conference on Combinatorics, Graph Theory and Applications (Eurocomb 21), Trends in Mathematics, Vol. 14 Springer, p. 325-330, 2021.}

\thanks{$^{\ddagger}$ 
 	Institute of Discrete Mathematics, 
 	Graz University of Technology, 
 	Steyrergasse 30,
 	8010 Graz,
 	Austria,  
 	{\tt \{do,erde,kang,missethan\}@math.tugraz.at}.
 	Supported by Austrian Science Fund (FWF): I3747, W1230, P36131}

\maketitle
\begin{abstract} 
The binomial random bipartite graph $G(n,n,p)$ is the random graph formed by taking two partition classes of size $n$ and including each edge between them independently with probability $p$. It is known that this model exhibits a similar phase transition as that of the binomial random graph $G(n,p)$ as $p$ passes the critical point of $\frac{1}{n}$. We study the component structure of this model near to the critical point. We show that, as with $G(n,p)$, for an appropriate range of $p$ there is a unique `giant' component and we determine asymptotically its order and excess. We also give more precise results for the distribution of the number of components of a fixed order in this range of $p$. These results rely on new bounds for the number of bipartite graphs with a fixed number of vertices and edges, which we also derive.
\end{abstract}

\section{Introduction}
\subsection{Background and motivation}
It was shown by Erd\H{o}s and R\'{e}nyi \cite{E-R} that a `phase transition' occurs in the uniform random graph model $G(n,m)$ when $m$ is around $\frac{n}{2}$. Standard arguments on the asymptotic equivalence of the two models imply that a similar phenomenon occurs in the binomial random graph model $G(n,p)$ when $p$ is around $\frac{1}{n}$. More precisely, when $p=\frac{1-\epsilon}{n}$ for a fixed $\epsilon>0$, with high probability\footnote{With probability tending to one as $n \to \infty$.} (whp for short) every component of $G(n,p)$ has order at most $O(\log n)$; when $p=\frac{1}{n}$, whp the order of the largest component is $\Theta\left(n^\frac{2}{3}\right)$; and when $p=\frac{1+\epsilon}{n}$, whp $G(n,p)$ contains a unique `giant component' $L_1\left(G(n,p)\right)$ of order $\Omega(n)$. 

 Whilst it may seem at first that the component behaviour of the model $G(n,p)$ exhibits quite a sharp `jump' at this point, subsequent investigations, notably by Bollob\'{a}s \cite{Bollobas} and {\L}uczak \cite{Luczak}, showed that in fact, if one chooses the correct parameterisation for $p$, this change can be seen to happen quite smoothly. In particular, {\L}uczak's work implies the following result in the \emph{weakly supercritical regime}. Throughout the paper let $L_i(G)$ denote the $i$th largest component of a graph $G$ for $i \in \mathbb{N}$. We use the standard Landau notation for asymptotic orders.
\begin{theorem}[\cite{Luczak}]\label{t:giantLuczak}
Let $\epsilon=\epsilon(n)>0$ be such that $\epsilon^3 n \rightarrow \infty$ and $\epsilon=o(1)$, and let $p=\frac{1+\epsilon}{n}$. Then whp 
\[
\left|L_1\left(G(n,p)\right)\right| = (1+o(1)) 2\epsilon n \qquad \text{and} \qquad \left|L_2\left(G(n,p)\right)\right| \leq n^{\frac{2}{3}}.
\]
\end{theorem}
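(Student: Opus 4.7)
The plan is to combine a first-moment bound forbidding medium-sized components with a concentration argument for the number of vertices outside the giant. Fix a threshold $k_0 := C \epsilon^{-2} \log(1/\epsilon)$ for a suitably large constant $C$; observe that $\epsilon^3 n \to \infty$ forces $k_0 = o(n^{2/3})$, so once I show that whp only the giant exceeds order $k_0$, the bound $|L_2| \leq n^{2/3}$ is immediate.

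To bound $|L_2|$, I would estimate, for each $k \in [k_0, n/2]$, the expected number of components of order $k$ by summing over excess $\ell \geq 0$ a classical bound on the number of connected graphs on $k$ labelled vertices with excess $\ell$, multiplied by $\binom{n}{k} p^{k-1+\ell}(1-p)^{k(n-k)+\binom{k}{2}-(k-1+\ell)}$. Using the expansion $(1+\epsilon)e^{-\epsilon} = \exp(-\epsilon^2/2 + O(\epsilon^3))$, the dominant factor becomes $\exp(-k\epsilon^2/2)$, and the sum tends to $0$ by the choice of $k_0$. A standard union-bound argument separately rules out two components of order exceeding $n/2$, so whp every component other than $L_1$ has order at most $k_0$.

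For $|L_1|$, I would study $Y := \sum_{v} \mathbf{1}\{v \text{ lies in a component of order } < k_0\}$ and split $Y = Y_{\text{tree}} + Y_{\text{complex}}$ according to whether the containing component is a tree. The tree contribution is
\[
\mathbb{E}[Y_{\text{tree}}] = \sum_{k=1}^{k_0-1} k \binom{n}{k} k^{k-2} p^{k-1} (1-p)^{k(n-k) + \binom{k}{2} - (k-1)},
\]
which after Stirling becomes $n \sum_{k < k_0} (1+o(1))(2\pi k^3)^{-1/2} [(1+\epsilon)e^{-\epsilon}]^k$. Comparing this with the solution $\rho = 2\epsilon - \tfrac{8}{3}\epsilon^2 + O(\epsilon^3)$ of the fixed-point equation $1-\rho = e^{-(1+\epsilon)\rho}$ for the survival probability of a Poisson$(1+\epsilon)$ branching process yields $\mathbb{E}[Y_{\text{tree}}] = (1 - 2\epsilon + o(\epsilon)) n$. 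A second-moment computation, with off-diagonal pair terms controlled by a two-tree analogue of the above formula, gives $Y_{\text{tree}} = \mathbb{E}[Y_{\text{tree}}] + o(\epsilon n)$ whp. For $Y_{\text{complex}}$, each additional unit of excess contributes a factor of $O(kp) = O(1)$ over the tree count, but the extra $\epsilon$-savings from the $\exp(-k\epsilon^2/2)$ tail together with the choice of $k_0$ force $\mathbb{E}[Y_{\text{complex}}] = o(\epsilon n)$. Combining, $n - Y = 2\epsilon n + o(\epsilon n)$ whp, and by the previous paragraph this mass must concentrate in a single giant component.

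The principal obstacle is extracting the correct constant $2$ in $|L_1| = (1+o(1))\, 2\epsilon n$: the sum $\sum_{k} k^{-3/2}[(1+\epsilon)e^{-\epsilon}]^k$ must be controlled to accuracy $o(\epsilon)$, which I would handle via a careful comparison with the tree generating function $T(x) = \sum_{k \geq 1} k^{k-1} x^k / k!$ and the implicit relation defining $\rho$. Equally delicate is the uniform treatment of complex small components up to $k_0 \asymp \epsilon^{-2}\log(1/\epsilon)$, which demands sharp bounds on the number of connected graphs of given order and given excess, uniformly over a wide range of excess values---precisely the type of enumerative input that, in the bipartite setting addressed later in this paper, has to be established from scratch.
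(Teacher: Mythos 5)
The paper does not actually prove this statement — it is cited from \L{}uczak and serves as background; so I can only compare your proposal against the approach the paper uses for its bipartite analogue (Theorem~1.5 / \texttt{t:giant}) and against the standard \L{}uczak argument they are adapting. Your proposal is in the right spirit, but it has two genuine gaps.

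\textbf{The choice of $k_0$.} You take $k_0 = C\epsilon^{-2}\log(1/\epsilon)$ and assert $k_0 = o(n^{2/3})$, but this fails near the lower boundary of the regime. If $\epsilon^3 n = \omega$ with $\omega \to \infty$ slowly (say $\omega = \log\log n$), then $\epsilon^{-2} = n^{2/3}\omega^{-2/3}$ while $\log(1/\epsilon) \approx \tfrac{1}{3}\log n$, giving $k_0 \approx \tfrac{C}{3}\, n^{2/3} (\log n)\, \omega^{-2/3} \gg n^{2/3}$. The correct scale, visible in Theorem~\ref{t:trees}, is $\epsilon^{-2}\log(\epsilon^3 n)$: then $k_0 \asymp n^{2/3}\omega^{-2/3}\log\omega = o(n^{2/3})$ under the hypothesis $\epsilon^3 n \to \infty$ alone.

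\textbf{The uniqueness of the large component.} The more serious gap is the claim that a first-moment bound summed over all excesses $\ell\geq 0$ makes the expected number of components of order $k$ tend to $0$ uniformly over $k\in[k_0, n/2]$, so that ``whp every component other than $L_1$ has order at most $k_0$.'' This cannot be correct: the giant itself is a complex component of order $\Theta(\epsilon n)$ and excess $\Theta(\epsilon^3 n)$, so for $k$ in this range the expectation is bounded away from $0$ (indeed it is $\Theta(1)$). Concretely, using $\hat{C}(k,\ell)\lesssim c^{\ell}\ell^{-\ell/2}k^{k+(3\ell-1)/2}$, the per-excess ratio is roughly $\left(\tfrac{k^{3/2}}{n\sqrt{\ell}}\right)^{\ell}e^{O(\ell\epsilon)}$, which only decays geometrically in $\ell$ for $k\lesssim n^{2/3}$; beyond that scale the sum over $\ell$ blows up and the first moment stops controlling anything. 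The correct argument (and the one the paper carries out for $G(n,n,p)$ in Section~\ref{s:giant}) is in several stages: (a) rule out tree and unicyclic components of order $\gg \epsilon^{-2}\log(\epsilon^3 n)$ by first moment; (b) rule out complex components of order $\leq n^{2/3}$ by first moment; (c) show the number of vertices in components of order $\geq n^{2/3}$ is concentrated around $2\epsilon n$; (d) use a sprinkling (two-round exposure) argument, partitioning the ``large'' vertices into blocks and showing that adding a small amount of extra randomness connects all blocks. Your proposal is missing step~(d), and steps (a)--(c) cannot substitute for it: concentration of the total mass of large vertices together with absence of medium components does not force a single component, since two components each of order $\approx \epsilon n$ are compatible with both. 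Until the sprinkling step is supplied, the conclusions $|L_1|\approx 2\epsilon n$ and $|L_2|\leq n^{2/3}$ do not follow. The rest of the proposal — the branching-process fixed-point calculation with $\rho = 2\epsilon - \tfrac{8}{3}\epsilon^2 + O(\epsilon^3)$, the second-moment control of $Y_{\text{tree}}$, and the enumerative bound on complex excess — is on the right track and consistent with the paper's methods.
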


Furthermore, {\L}uczak's work allowed him to give a precise estimate for the excess of $L_1\left(G(n,p)\right)$ (the \emph{excess} of a connected graph is the difference between the number of edges and vertices).
The excess is in some way a broad measure of the complexity of the giant component, determining its density, which has important consequences, for example in terms of the length of the longest cycle in (see for example \cite{Luczakcycle}), or the genus of the giant component (see for example \cite{Kang}).

\begin{theorem}[\cite{Luczak}]\label{t:excessLuczak}
Let $\epsilon=\epsilon(n)>0$ be such that $\epsilon^3 n \rightarrow \infty$ and $\epsilon=o(1)$, and let $p=\frac{1+\epsilon}{n}$. Then whp
\[
\text{excess}\left(L_1\left(G(n,p)\right)\right) = (1+o(1)) \frac{2}{3}\epsilon^3 n.
\]
\end{theorem}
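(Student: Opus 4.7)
The plan is to use the edge-counting identity
\[
\operatorname{excess}(L_1) \;=\; \bigl|E(G(n,p))\bigr| - |L_1| - \bigl|E\bigl(G(n,p)[V \setminus L_1]\bigr)\bigr|,
\]
which holds because $L_1$ is a component and so no edges of $G(n,p)$ cross between $L_1$ and its complement. Combined with Theorem~\ref{t:giantLuczak}, which controls $|L_1|$, this reduces the task to estimating two Binomially-distributed edge counts with sufficient precision.

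For the first count, $|E(G(n,p))|$ is $\mathrm{Bin}\bigl(\binom{n}{2},p\bigr)$ with mean $\binom{n}{2}p$. For the second, conditioning on the vertex set $S = V(L_1)$, the induced subgraph on $V \setminus S$ is distributed as $G(n-|S|,p)$ conditioned on the subcritical event that it contains no component of size exceeding $|S|$. Since the effective density $(n-|S|)p = (1-2\epsilon)(1+\epsilon) = 1 - \epsilon - 2\epsilon^2$ is bounded below $1$ by $\Theta(\epsilon)$, the complement is subcritical and the conditioning holds with probability $1-o(1)$, so $|E(G(n,p)[V \setminus L_1])|$ is (conditionally) approximately $\mathrm{Bin}\bigl(\binom{n-|L_1|}{2},p\bigr)$.

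Substituting and simplifying, with $\rho = |L_1|/n$,
\[
\binom{n}{2}p - |L_1| - \binom{n-|L_1|}{2}p \;=\; n\bigl[\rho\epsilon - \tfrac{1+\epsilon}{2}\rho^2\bigr] + O(\epsilon).
\]
Inverting the branching-process fixed-point equation $1 - \rho = e^{-(1+\epsilon)\rho}$ yields the expansion $\rho = 2\epsilon - \tfrac{8}{3}\epsilon^2 + \tfrac{28}{9}\epsilon^3 + O(\epsilon^4)$, and a direct calculation then shows that the $\Theta(\epsilon n)$ and $\Theta(\epsilon^2 n)$ contributions cancel exactly, leaving $\tfrac{2}{3}\epsilon^3 n + O(\epsilon^4 n) = (1+o(1))\tfrac{2}{3}\epsilon^3 n$, as claimed.

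The main obstacle is precisely this cancellation. Theorem~\ref{t:giantLuczak} controls $|L_1|$ only up to additive error $o(\epsilon n)$, whereas the cancellation requires $|L_1| = 2\epsilon n - \tfrac{8}{3}\epsilon^2 n + o(\epsilon^2 n)$ whp; analogously one needs the two edge counts controlled to within $o(\epsilon^3 n)$, which is stronger than what a naive Chernoff bound gives in the full range $\epsilon^3 n \to \infty$. The saving grace is that the fluctuations of $|E(G(n,p))|$ and $|E(G(n,p)[V \setminus L_1])|$ are heavily correlated and largely cancel in their difference, so the required precision must be obtained by a joint analysis (for instance, via an exploration process that simultaneously tracks $|L_1|$ and the number of back-edges encountered during the exploration). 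A more robust alternative which sidesteps the cancellation entirely is to work with the $2$-core $K$ of $L_1$: since iteratively pruning vertices of degree one removes an equal number of edges and vertices, $\operatorname{excess}(L_1) = \operatorname{excess}(K)$, and a configuration-model / branching-process analysis establishes $|V(K)| = (1+o(1))\,2\epsilon^2 n$ and $|E(K)| = (1+o(1))(2\epsilon^2 + \tfrac{2}{3}\epsilon^3)\,n$ whp, giving the theorem directly.
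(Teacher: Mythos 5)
The statement you are proving is cited from \L{}uczak and is not reproved in the paper; what the paper does prove is the analogous bipartite result (Theorem~\ref{t:excess}), and it does so — following \L{}uczak — by a \emph{multi-round exposure} (sprinkling) argument: one first establishes a weak $O(\epsilon^3 n)$ bound on the excess via enumerative estimates, then starts from a supercritical $p_1 \ll p$ where the excess is $o(\epsilon^3 n)$ and tracks the incremental change $\Delta_i$ directly at each sprinkling step, approximating the resulting sum by an integral. This scheme never performs any subtraction of two comparable quantities, so the cancellation you worry about simply does not arise.

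Your edge-counting decomposition and the Taylor expansion $\rho = 2\epsilon - \tfrac{8}{3}\epsilon^2 + \tfrac{28}{9}\epsilon^3 + O(\epsilon^4)$, leading to $\rho\epsilon - \tfrac{1+\epsilon}{2}\rho^2 = \tfrac{2}{3}\epsilon^3 + O(\epsilon^4)$, are correct, and you correctly diagnose the obstruction: Theorem~\ref{t:giantLuczak} controls $|L_1|$ only to $o(\epsilon n)$ whereas the cancellation requires $o(\epsilon^2 n)$, and the $\pm\Theta(\sqrt{n})$ fluctuations of $|E(G(n,p))|$ exceed $\epsilon^3 n$ throughout much of the range $\epsilon^3 n\to\infty$. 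But the proposed repair by a ``joint exploration'' is stated, not carried out; as presented it is a plan rather than a proof.

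The deeper problem is with the fallback you call ``more robust''. The estimates $|V(K)| = (1+o(1))\,2\epsilon^2 n$ and $|E(K)| = (1+o(1))(2\epsilon^2 + \tfrac{2}{3}\epsilon^3)n$, even if true, carry additive errors of order $o(\epsilon^2 n)$ each. Since $\epsilon = o(1)$, the target quantity $\tfrac{2}{3}\epsilon^3 n$ is itself $o(\epsilon^2 n)$, so the subtraction $|E(K)|-|V(K)|$ yields only $\operatorname{excess}(L_1) = o(\epsilon^2 n)$, not $(1+o(1))\tfrac{2}{3}\epsilon^3 n$. The $2$-core route does not ``sidestep the cancellation entirely''; it merely shifts it down one order of magnitude, and you would need both $2$-core counts with \emph{relative} error $o(\epsilon)$ — or, better, a direct estimate of $|E(K)|-|V(K)|$ — to recover the theorem. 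This is exactly why \L{}uczak, and the present paper in Section~\ref{s:excess}, avoid any such subtraction and instead accumulate the excess additively through sprinkling rounds, using the Harris/FKG inequality (Lemma~\ref{l:Harris}) to control the conditioning in each round.
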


{\L}uczak also gave a finer picture of the distribution of the components in $G(n,p)$ in the \textit{weakly subcritical} and weakly supercritical regimes. In what follows, a \emph{tree, unicyclic, and complex} component is a component which has no, exactly one and more than one cycle, respectively.

\begin{theorem}[\cite{Luczak}]\label{t:finerLuczak}
Let $\epsilon = \epsilon(n)$ be such that $|\epsilon|^3 n  \rightarrow \infty$ and $\epsilon = o(1)$, let $p=\frac{1 + \epsilon}{n}$, let
$
\delta = \epsilon - \log(1+\epsilon)
$, and
let $\alpha=\alpha(n) > 0$ be an arbitrary function. Then the following hold in $G(n,p)$
\begin{enumerate}[(i)]
\item\label{i:treeLuczak}
With probability $1-e^{-\Omega(\alpha)}$ there are no tree components of order larger than
$$
\frac{1}{\delta} \left( \log\left( |\epsilon|^3 n\right) - \frac{5}{2} \log \log \left( |\epsilon|^3 n\right) + \alpha \right).
$$
\item\label{i:unicyclicLuczak}
With probability $1-e^{-\Omega(\alpha)}$ there are no unicyclic components of order larger than $\frac{\alpha}{\delta}$.
\item\label{i:complexLuczak}
If $\epsilon < 0$, then whp there are no complex components. 
\item\label{i:complex2Luczak}
If $\epsilon>0$, then with probability $1 - O\left(\left(\epsilon^3 n \right)^{-1}\right)$ there are no complex components of order smaller than $n^{\frac{2}{3}}$. 
\end{enumerate}
\end{theorem}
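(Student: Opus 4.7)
The plan is to apply the first moment method separately to each component type and order, relying on exact or asymptotic enumeration of connected labelled graphs by excess. Let $X_{k,\ell}$ denote the number of components of $G(n,p)$ with $k$ vertices and excess $\ell$, so that
$$\mathbb{E}[X_{k,\ell}] = \binom{n}{k} C(k,\ell)\, p^{k-1+\ell}(1-p)^{k(n-k)+\binom{k}{2}-(k-1+\ell)},$$
where $C(k,\ell)$ counts connected labelled graphs on $k$ vertices with excess $\ell$. The general recipe for each part is: substitute a formula or upper bound for $C(k,\ell)$, apply Stirling's approximation to the binomial coefficient, approximate $(1-p)^{\ldots}$ by $e^{-(1+\epsilon)k(1+o(1))}$, expose the key decay via the identity $(1+\epsilon)^k e^{-\epsilon k}=e^{-\delta k}$, sum over the relevant range of $k$ and $\ell$, and finally apply Markov's inequality.

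For part (i) I would use Cayley's formula $C(k,0)=k^{k-2}$, which yields
$$\mathbb{E}[X_{k,0}] = \Theta\!\left(\frac{n}{k^{5/2}}\right) e^{-\delta k}.$$
With the choice $k_0=\tfrac{1}{\delta}(\log(|\epsilon|^3 n)-\tfrac{5}{2}\log\log(|\epsilon|^3 n)+\alpha)$ and the expansion $\delta\sim\epsilon^2/2$, a direct substitution shows that $n\, k_0^{-5/2} e^{-\delta k_0}=\Theta(e^{-\alpha})$; the tail sum $\sum_{k\ge k_0}$ contributes only an additional factor of $\delta^{-1}$, which is absorbed into the same order. For part (ii) I would use the known asymptotic $C(k,1)\sim \tfrac{\sqrt{\pi}}{4} k^{k-1/2}$, giving $\mathbb{E}[X_{k,1}]=\Theta(k^{-1}) e^{-\delta k}$, so that $\sum_{k\ge \alpha/\delta}\mathbb{E}[X_{k,1}]=O(e^{-\alpha}/\alpha)=e^{-\Omega(\alpha)}$.

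For the complex cases I would use Wright-type bounds of the form $C(k,\ell)\le d_\ell\, k^{k-1+3\ell/2}$ with $d_\ell=O\bigl((c/\ell)^{\ell/2}\bigr)$, which lead to
$$\mathbb{E}[X_{k,\ell}]\lesssim d_\ell\cdot \frac{n}{k^{3/2}}\left(\frac{k^{3/2}}{n}\right)^{\ell} e^{-\delta k}.$$
For part (iii) ($\epsilon<0$) I would sum over $k\ge 3$ and $\ell\ge 2$: the inner sum in $\ell$ is dominated by $\ell=2$, and combined with the geometric tail in $k$ one shows the total is $o(1)$ under the hypothesis $|\epsilon|^3 n\to\infty$. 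For part (iv) ($\epsilon>0$, restricted to $k\le n^{2/3}$), the ratio $k^{3/2}/n\le n^{-1/2}$ is small, the sum over $\ell\ge 2$ is again dominated by $\ell=2$, and one obtains $O((\epsilon^3 n)^{-1})$, with the dominant contribution coming from $k$ of order $n^{2/3}$.

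The main obstacle is the delicate bookkeeping in part (i), where the $\tfrac{5}{2}\log\log$ correction in the threshold is engineered to exactly match the polynomial $k^{-5/2}$ factor from Stirling, so no constants can be lost along the way. A secondary difficulty is obtaining uniform bounds on $C(k,\ell)$ that are sharp enough for the complex cases --- this rests on Wright's asymptotic enumeration of connected graphs with given excess, and the decay $d_\ell=O((c/\ell)^{\ell/2})$ is precisely what makes the sum over $\ell\ge 2$ both convergent and small enough to produce the stated probability bounds.
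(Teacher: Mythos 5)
Your parts (i) and (ii) follow essentially the route the paper takes for the bipartite analogues (Theorems~\ref{t:trees}(ii) and~\ref{t:unicyclic}(ii)): exact enumeration, Stirling, the decay $e^{-\delta k}$, and Markov. The one piece you gloss over is that the formula $\mathbb{E}[X_{k}]=\Theta(n k^{-5/2})e^{-\delta k}$ is only valid for $k=o(n)$; for the full tail one needs an extra damping term of order $k^3/n^2$ (as in the paper's passage through~\eqref{e:largetreebound} and~\eqref{e:parabolaminimum}), and a case split on the size of $\alpha$. This is bookkeeping, not a missing idea.

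The Wright-type bound you use for the complex case, $C(k,\ell)\le d_\ell\, k^{\,k-1+3\ell/2}$ with $C(k,\ell)$ counting connected graphs with $k-1+\ell$ edges, is off by a full factor of $k$. Sanity check: at $\ell=0$ it gives $k^{k-1}$ rather than Cayley's $k^{k-2}$, and at $\ell=1$ it gives $k^{k+1/2}$ rather than the true $\sim\sqrt{\pi/8}\,k^{k-1/2}$. The correct exponent (matching the paper's~\eqref{e:complexgeneral} with excess $r=\ell-1$) is $k-2+3\ell/2$, i.e.\ $\hat C(k,r)=O\bigl(r^{-r/2}k^{\,k+(3r-1)/2}\bigr)$. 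This error is not cosmetic: carrying the spurious factor of $k$ through your estimate for $\mathbb{E}[X_{k,\ell}]$ and summing gives an expected number of complex components of order $1/(n\delta^{5/2})\asymp 1/(n|\epsilon|^5)$ instead of the required $1/(n|\epsilon|^3)$, and $1/(n|\epsilon|^5)$ is \emph{not} $o(1)$ under $|\epsilon|^3 n\to\infty$ (take $\epsilon=n^{-1/3+\eta}$ with $\eta$ small). So as written parts (iii) and (iv) do not close.

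Separately, for part (iii) the paper's proof of the bipartite analogue (Theorem~\ref{t:complex}(i)) takes a different and cleaner route: every complex component contains a connected subgraph with exactly two independent cycles (a ``theta'' or ``dumbbell''), so it suffices to show the expected number of such \emph{subgraphs} is small. That calculation produces decay $e^{-|\epsilon|k}$ rather than $e^{-\delta k}$, has typical scale $1/|\epsilon|$ rather than $1/\epsilon^2$, yields $O(1/(n|\epsilon|^3))$ directly, and entirely avoids controlling $C(k,\ell)$ for large $\ell$ or for $k$ comparable to $n$ --- both of which your first-moment-on-components sum would still need to handle even after correcting the exponent. (Minor: your unicyclic constant $\sqrt{\pi}/4$ should be $\sqrt{\pi/8}$; this is immaterial to the order.)
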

In this paper we investigate similar questions about the component structure of a different random graph model, the \emph{binomial random bipartite graph} $G(n,n,p)$, near to its critical point. The binomial random bipartite graph $G(n_1,n_2,p)$ is the random graph given by taking two partition classes $N_1$ and $N_2$ of sizes $n_1$ and $n_2$, respectively, and including each edge between $N_1$ and $N_2$ independently with probability $p$. For simplicity, we restrict our attention to the case where $n_1=n_2$. It is possible that similar techniques will work as long as the ratio $\frac{n_1}{n_2} = \Theta(1)$ is a fixed constant.

As in the case of $G(n,p)$, it is known, see for example \cite{Johansson}, that when $p=\frac{1-\epsilon}{n}$ for a fixed $\epsilon>0$, whp every component of $G(n,n,p)$ has order at most $O(\log n)$, and when $p=\frac{1+\epsilon}{n}$, whp $G(n,n,p)$ contains a unique `giant component' $L_1\left(G(n,n,p)\right)$ of order $\Omega(n)$. Hence, a phase transition occurs at $p=\frac{1}{n}$, as in $G(n,p)$.

There has been some interest in this model recently: Johannson \cite{Johansson} determined the critical point as described above in the general $G(n_1,n_2,p)$ model, Jing and Mohar \cite{Mohar} determined the genus of $G(n_1,n_2,p)$ in the dense regime, and Do, Erde and Kang \cite{Do} determined the genus of $G(n_1,n_2,p)$ in the sparse regime. 

This model can also be considered as a special case of the \emph{inhomogeneous random graphs} studied by Bollob\'{a}s, Janson and Riordan \cite{Bollobasphase}, who studied the phase transition in this much broader model. Whilst their results do not apply in the weakly supercritical regime, this regime was studied for a particular model of inhomogeneous random graphs, which again generalises the bipartite binomial random graph, namely the \emph{multi-type binomial random graph}, by Kang, Koch and Pach\'{o}n \cite{Kangmultitype}. In particular, it follows from their work that in the weakly supercritical regime there is a unique giant component, and they determine asymptotically its order.
\begin{theorem}[\cite{Kangmultitype}]\label{t:Kang}
Let $\epsilon=\epsilon(n)>0$ be such that $\epsilon^3 n \rightarrow \infty$ and $\epsilon=o(1)$, let $p=\frac{1+\epsilon}{n}$, and let $L_i = L_i\left(G(n,n,p)\right)$ for $i=1,2$. Then whp 
\begin{equation*}\label{e:giant}
|L_1\cap N_1| = (1+o(1)) 2\epsilon n \quad \text{and} \quad |L_1\cap N_2| = (1+o(1)) 2 \epsilon n.
\end{equation*} 
Furthermore,  whp  $|L_2| = o(\epsilon n)$.
\end{theorem}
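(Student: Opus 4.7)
The plan is to adapt the classical exploration-process argument used for $G(n,p)$ in the weakly supercritical regime (cf.\ \cite{Luczak}) to the bipartite setting. The key structural observation is that a breadth-first search of a component of $G(n,n,p)$ started at a vertex $v \in N_1$ alternates between the two parts: at each step one exposes all edges from the next vertex in the queue to the currently unexplored vertices of the opposite part. As long as $o(n)$ vertices on each side have been explored, the number of newly discovered neighbours is distributed as $\mathrm{Bin}(n-o(n),p)$ and is well approximated by $\mathrm{Po}(1+\epsilon)$. Consequently the exploration can be coupled, from above and below, with two two-type Galton--Watson trees in which vertices of each type produce $\mathrm{Po}\bigl((1\pm o(1))(1+\epsilon)\bigr)$ children of the opposite type. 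Such a process has mean-offspring matrix of spectral radius $1+\epsilon$, and its survival probability $\rho$ satisfies the familiar equation $1-\rho=e^{-(1+\epsilon)\rho}$, giving $\rho=(1+o(1))\,2\epsilon$ irrespective of the starting type. This identifies the ``target'' density of the giant on each side.

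The next step is a \emph{gap theorem}: for suitable thresholds $t_0 = \Theta\bigl(\epsilon^{-2}\log(\epsilon^3 n)\bigr)$ and $t_1 \gg t_0$ with $t_1 = o(\epsilon n)$, I would show that whp no component of $G(n,n,p)$ has order in $[t_0,t_1]$. The bound on small components follows from a union bound combined with exponential tail estimates for slightly subcritical two-type branching processes that dominate the exploration conditional on early termination. The bound ruling out intermediate components follows from a supermartingale argument applied to the queue-length process: once the exploration has reached size $t_0$ it has a uniformly positive drift and overshoots $t_1$ with probability $1-o(1/n)$.

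With the gap established, let $X_j$ denote the number of vertices of $N_j$ in components of order greater than $t_1$. The branching-process coupling gives $\mathbb{E}[X_j]=(1+o(1))\,2\epsilon n$ for $j\in\{1,2\}$. For the second moment, one runs the explorations from two distinct starting vertices in parallel; they are essentially independent until they collide, which happens with probability $o(1)$ conditional on both surviving beyond $t_1$. This gives $\mathrm{Var}(X_j)=o(\mathbb{E}[X_j]^2)$, and Chebyshev's inequality yields $X_j=(1+o(1))\,2\epsilon n$ whp on each side.

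Finally, to show that all these vertices lie in a single giant $L_1$ (and hence $|L_2|=o(\epsilon n)$), I would use a sprinkling argument. Write $p=p_1+p_2$ with $p_2=\delta/n$ and $\delta=o(\epsilon)$ chosen so that $\delta t_1^2/n\to\infty$ sufficiently fast, and run the analysis above for $G(n,n,p_1)$. A straightforward consequence of the branching-process coupling, applied type by type, is that whp every component of order greater than $t_1$ contains $\Omega(t_1)$ vertices on \emph{each} side. Hence for any two distinct large components $C,C'$, the number of potential bipartite edges across them is $\Omega(t_1^2)$, so $G(n,n,p_2)$ connects them with probability $1-\exp(-\Omega(\delta t_1^2/n))=1-o(n^{-2})$; a union bound over pairs completes the proof. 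I expect the main technical obstacle to be the gap theorem: the alternating two-type nature of the exploration complicates the supermartingale analysis, since one must keep track of the parity of the current step and ensure that the positive drift is preserved even when one side of the queue is temporarily exhausted.
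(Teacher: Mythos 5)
Your outline is a reasonable reconstruction of the branching-process route to Theorem~\ref{t:Kang}, and in fact it is essentially the approach of Kang, Koch and Pach\'{o}n \cite{Kangmultitype}, to whom the theorem is attributed --- the paper at hand does not reprove this statement, but instead proves the strictly stronger Theorem~\ref{t:giant}, which implies it, by a genuinely different method. The paper's route is enumerative: it derives explicit or asymptotic counts $C(i,j,\ell)$ for connected bipartite graphs with given order and excess (Theorems~\ref{t:scoins}, \ref{t:unicyliccomponents}, \ref{t:complexcomponents}), plugs these into \eqref{e:componentcount} to control $\mathbb{E}(X(i,j,\ell))$ and higher moments, pins down the number of vertices in \emph{small} tree and unicyclic components via a Bollob\'{a}s-style comparison between $G(n,n,\frac{1+\epsilon}{n})$ and $G(n,n,\frac{1-\epsilon'}{n})$ (Lemma~\ref{l:vertices}), and only then uses sprinkling to glue the large components into a single giant. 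That approach buys a much sharper conclusion: the order of $L_1$ to within $\tfrac{1}{50}n^{2/3}$, an explicit $1-O((\epsilon^3n)^{-1/6})$ error probability, the $(1\pm 2\sqrt{\epsilon})$ control on how $L_1$ splits across $N_1, N_2$, and ultimately the excess estimate of Theorem~\ref{t:excess}. Your exploration-process argument is cleaner for the weaker $(1+o(1))2\epsilon n$ statement and generalises naturally to other multi-type models, but it would not yield the error-probability or the $n^{2/3}$ precision that the rest of the paper relies on.

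Two caveats on your sketch. First, your supermartingale step for the gap theorem is indeed delicate: as you note, the queue in a bipartite BFS is not a simple random walk, and when one side is temporarily exhausted the increment distribution changes. The standard fix is to run the exploration in \emph{generations} rather than vertex-by-vertex, tracking the pair of active-set sizes $(a_1,a_2)$ on the two sides and using a Chernoff bound per generation; without that, the claimed uniform positive drift needs justification. Second, for the sprinkling step you assert that every component of order $>t_1$ has $\Omega(t_1)$ vertices on \emph{each} side ``as a straightforward consequence of the coupling.'' In the paper's framework this is not automatic and is handled by a separate argument (Lemma~\ref{l:balanced}, ruling out unbalanced large components); in your framework you would need to show that a surviving two-type Galton--Watson tree, stopped at size $t_1$, has a roughly $1{:}1$ ratio of types with probability $1-o(n^{-1})$, which holds but is not quite one line. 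Neither of these is a fatal gap, but both deserve to be made explicit.
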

In this paper we extend and strengthen the work in \cite{Johansson,Kangmultitype} on the component structure of $G(n,n,p)$ in the weakly supercritical regime.

\subsection{Main results}
 In this paper we prove the following analogues of Theorems \ref{t:giantLuczak}--\ref{t:finerLuczak} in the binomial random bipartite graph model.

Our first main result determines the existence and asymptotic order of the `giant' component in $G(n,n,p)$ near to the critical point. 
\begin{theorem}\label{t:giant}
Let $\epsilon=\epsilon(n)>0$ be such that $\epsilon^3 n \rightarrow \infty$ and $\epsilon=o(1)$, let $\epsilon'$ be defined as the unique positive solution to $(1-\epsilon')e^{\epsilon'} = (1+\epsilon)e^{-\epsilon}$, let $p=\frac{1+\epsilon}{n}$, and let $L_i = L_i\left(G(n,n,p)\right)$ for $i=1,2$. Then with probability $1-O\left(\left(\epsilon^3 n\right)^{-\frac{1}{6}}\right)$ we have
\[
\left| \left|L_1\right|- \frac{2(\epsilon + \epsilon')}{1+\epsilon}n \right| < \frac{1}{50}n^\frac{2}{3} \qquad \text{ and} \qquad
\left| 	L_2 \right|  \leq n^{\frac{2}{3}}.
\]
Furthermore, with probability $1-O\left(\left(\epsilon^3 n\right)^{-\frac{1}{6}}\right)$ we have that 
\[
|L_1 \cap N_1| = \left(1 \pm 2\sqrt{\epsilon}\right) |L_1 \cap N_2|.
\]
\end{theorem}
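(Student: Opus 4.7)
The plan is to adapt the enumerative approach of {\L}uczak from \cite{Luczak} to the bipartite setting, leveraging the new bounds on the number of connected bipartite graphs with a prescribed number of vertices and edges that form a major ingredient of this paper. As a first preliminary step, I would observe that the constant $\rho:=\frac{\epsilon+\epsilon'}{1+\epsilon}$ is the survival probability of the Galton--Watson branching process governing the exploration of a component of $G(n,n,p)$: rearranging the defining identity $(1-\epsilon')e^{\epsilon'}=(1+\epsilon)e^{-\epsilon}$ gives $1-\rho=e^{-(1+\epsilon)\rho}$, which is precisely the fixed-point equation for the survival probability of a $\mathrm{Poisson}(1+\epsilon)$ branching process. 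This identifies the target asymptotic $\frac{2(\epsilon+\epsilon')}{1+\epsilon}n$ as $2\rho n$, namely $\rho n$ vertices on each side of the bipartition.

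I would then, for each triple $(a,b,\ell)$, consider $X_{a,b}^{(\ell)}$, the number of components of $G(n,n,p)$ that occupy $a$ vertices of $N_1$ and $b$ vertices of $N_2$ and have excess $\ell$, whose first moment has the form
\[
\mathbb{E}\bigl[X_{a,b}^{(\ell)}\bigr] \;=\; \binom{n}{a}\binom{n}{b}\,C(a,b,\ell)\,p^{a+b-1+\ell}(1-p)^{ab-(a+b-1+\ell)+a(n-b)+b(n-a)},
\]
where $C(a,b,\ell)$ counts connected bipartite graphs on class sizes $a,b$ with excess $\ell$. For trees one plugs in $C(a,b,0)=a^{b-1}b^{a-1}$, and for $\ell\geq1$ one invokes the new enumeration bounds. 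A Stirling-type analysis of this first moment, combined with the identity from the first paragraph, should show that $\mathbb{E}[X_{a,b}^{(0)}]$ decays exponentially in $\delta k$ once $k=a+b$ exceeds a critical logarithmic threshold of order $\delta^{-1}\log(\epsilon^3 n)$, where $\delta=\epsilon-\log(1+\epsilon)\sim\epsilon^2/2$. Summing a similar first-moment estimate over triples with $n^{2/3}\leq a+b<\tfrac{1}{2}\rho n$ and all $\ell\geq0$ should show that whp no component of such intermediate order exists, which combined with Theorem~\ref{t:Kang} would yield $|L_2|\leq n^{2/3}$ and imply that every vertex in a component of order greater than $n^{2/3}$ belongs to the unique giant component $L_1$.

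Writing $Y_j$ for the number of vertices of $N_j$ in tree components of order at most $n^{2/3}$ and $Z_j$ for the corresponding count in small unicyclic and complex components, the next step would be a second-moment computation on $Y_j$ together with a first-moment bound on $Z_j$, expected to yield $Y_j+Z_j=(1-\rho)n\pm\tfrac{1}{100}n^{2/3}$ with probability at least $1-O\bigl((\epsilon^3 n)^{-1/6}\bigr)$; hence
\[
|L_1\cap N_j| \;=\; n-Y_j-Z_j \;=\; \rho n \,\pm\, \tfrac{1}{100}n^{2/3}, \qquad j=1,2,
\]
and summing over $j$ would give the first displayed estimate of the theorem. The balance statement would then follow because $|L_1\cap N_j|=\rho n\bigl(1\pm O(n^{-1/3}\epsilon^{-1})\bigr)$ and $n^{-1/3}\epsilon^{-1}\leq\sqrt{\epsilon}$ under the weakly supercritical hypothesis $\epsilon^3 n\to\infty$, so the two intersections lie in an interval whose endpoints differ by a factor within $1\pm 2\sqrt{\epsilon}$. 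The principal obstacle will be the second-moment estimate for $Y_j$: reaching the stated probability requires sharp control of the covariance between pairs of small bipartite tree components, and this is precisely where the new bipartite enumeration bounds will come in, in the form of a bipartite analogue of the covariance computation in \cite{Luczak}.
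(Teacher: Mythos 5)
Your broad plan---enumerative first and second moment estimates for small components, then an argument to conclude that the remaining vertices form a single giant---is the same strategy as the paper, and your identification of $\rho=\frac{\epsilon+\epsilon'}{1+\epsilon}$ as the Poisson survival probability is exactly the right interpretation of the constant. However, there are several gaps, some of which are serious.

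\textbf{The uniqueness step.} You propose to exclude components of order in $[n^{2/3},\tfrac{1}{2}\rho n]$ by a first-moment bound summed over all excesses $\ell\ge 0$ and then to invoke Theorem~\ref{t:Kang}. Two problems. First, the first-moment bound on complex components does not become small for orders $k$ of size $\Theta(\epsilon n)$: the dominant excess is $\ell\approx\epsilon^3 n$, and at that point the factor $(ck^3/(\ell n^2))^{\ell/2}$ is bounded but not decaying; the giant is itself a complex component living precisely in this regime, and nothing in a raw union bound distinguishes it from a hypothetical second complex component of comparable order. Second, Theorem~\ref{t:Kang} is only a whp statement; it contributes an unquantified $o(1)$ error to a theorem that demands an explicit $O\bigl((\epsilon^3n)^{-1/6}\bigr)$ failure probability. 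The paper avoids both difficulties by a two-round sprinkling argument: it applies the vertex-count estimates at two nearby parameters $p_1<p$, observes that the large-vertex sets differ by less than $n^{2/3}$, and then shows via the sprinkled edges (with an explicit failure bound coming from a balanced partition of $\mathscr{L}(G(n,n,p_1))$) that all large components merge into one.

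\textbf{The expectation of the tree-vertex count.} A second-moment bound on $Y_j$ controls concentration around its mean, but you have not explained how to establish $\mathbb{E}(Y_j)\approx(1-\rho)n$ in the first place. Evaluating the sum $\sum_k k\sum_{i+j=k}\mathbb{E}\bigl[X(i,j,-1)\bigr]$ directly to the required $o(n^{2/3})$ precision is delicate. The paper circumvents this with a duality trick: it compares term-by-term with the subcritical model $G\bigl(n,n,\frac{1-\epsilon'}{n}\bigr)$, using the exact identity $(1-\epsilon')e^{\epsilon'}=(1+\epsilon)e^{-\epsilon}$ so that the ratio of the corresponding summands is $\frac{1-\epsilon'}{1+\epsilon}\exp\bigl(O(\epsilon k^2/n)\bigr)$. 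Since almost every vertex in the subcritical model lies in a small tree component (Theorem~\ref{t:subcriticalsmallcomponents}), this pins down the expectation. Without this (or some equivalent device) the expectation computation is incomplete.

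\textbf{The balance estimate.} Even granting an error of $\pm\tfrac{1}{100}n^{2/3}$ on each $|L_1\cap N_j|$, the deduction $|L_1\cap N_1|=(1\pm 2\sqrt{\epsilon})|L_1\cap N_2|$ does not follow. The relative error you get is $O\bigl(n^{2/3}/(\epsilon n)\bigr)=O\bigl(n^{-1/3}\epsilon^{-1}\bigr)$, and the inequality $n^{-1/3}\epsilon^{-1}\le\sqrt{\epsilon}$ is equivalent to $\epsilon^3 n^{2/3}\ge 1$, which is strictly stronger than the hypothesis $\epsilon^3 n\to\infty$ (e.g.\ it fails if $\epsilon^3 n=\log n$). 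The paper instead proves the balance statement directly, by bounding the expected number of complex components of order $\Theta(\epsilon n)$ whose two sides differ by a factor of $1+2\sqrt{\epsilon}$, using the substructure-counting device from the balanced/unbalanced lemma; this gives the right probability bound without needing the absolute error on $|L_1\cap N_j|$ to be small relative to $\sqrt{\epsilon}\cdot\epsilon n$.
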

Note that $\epsilon'=\epsilon-\frac{2}{3}\epsilon^2+O(\epsilon^3)$. Hence, Theorem \ref{t:giant} gives a more precise bound on the order of $L_1$ than Theorem \ref{t:Kang}, as well as determining more precisely the distribution of the vertices of $L_1$ between the partition classes, and giving a better bound on the order of the second largest component. Moreover, with the help of this increased accuracy, we are able to determine asymptotically the excess of the giant component $L_1$. 
\begin{theorem}\label{t:excess}
Let $\epsilon=\epsilon(n)>0$ be such that $\epsilon^3 n \rightarrow \infty$ and $\epsilon=o(1)$, and let $p=\frac{1+\epsilon}{n}$. Then whp
\begin{equation*}\label{eq:excess}
\text{excess}\left(L_1\left(G(n,n,p)\right)\right) = (1+o(1)) \frac{4}{3}\epsilon^3 n.
\end{equation*} 
\end{theorem}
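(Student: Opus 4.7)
My plan is to derive $\text{excess}(L_1)$ from the identity
\[
\text{excess}(L_1) = |E(L_1)| - |V(L_1)| + 1,
\]
by estimating each term on the right-hand side with error $o(\epsilon^3 n)$. Theorem~\ref{t:giant} immediately gives $|V(L_1)| = \frac{2(\epsilon + \epsilon')}{1+\epsilon} n + O(n^{2/3})$, and from the defining relation $(1-\epsilon')e^{\epsilon'} = (1+\epsilon)e^{-\epsilon}$ one extracts the Taylor expansion $\epsilon' = \epsilon - \frac{2}{3}\epsilon^2 + \frac{4}{9}\epsilon^3 + O(\epsilon^4)$, whence $|V(L_1)| = 4\epsilon n - \frac{16}{3}\epsilon^2 n + \frac{56}{9}\epsilon^3 n + o(\epsilon^3 n)$.

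For the edge count I would use $|E(L_1)| = |E(G)| - |E(G\setminus L_1)|$. Here $|E(G)|\sim\mathrm{Bin}(n^2,p)$ concentrates sharply around $(1+\epsilon)n$. The core of the argument is a discrete duality principle for bipartite random graphs: conditional on $n_i^L := |L_1\cap N_i|$, the subgraph induced on the complement of $L_1$ is approximately distributed as $G(n - n_1^L, n - n_2^L, p)$ conditioned on containing no giant component, and since $(n - n_i^L)p = 1 - \epsilon' < 1$, this lies in the weakly subcritical regime. Using the new enumerative bounds for connected bipartite graphs by excess promised in the abstract, first-moment estimates should show that whp no non-giant component is complex, that the number of unicyclic non-giant components is $o(\epsilon^3 n)$, and that $|E(G\setminus L_1)|$ concentrates near $(n - n_1^L)(n - n_2^L) p = n\cdot\frac{(1-\epsilon')^2}{1+\epsilon}$, which expands as $n - 3\epsilon n + \frac{16}{3}\epsilon^2 n - \frac{68}{9}\epsilon^3 n + o(\epsilon^3 n)$. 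Subtracting gives $|E(L_1)| = 4\epsilon n - \frac{16}{3}\epsilon^2 n + \frac{68}{9}\epsilon^3 n + o(\epsilon^3 n)$. Plugging both expansions into the identity, the $\epsilon n$ and $\epsilon^2 n$ terms cancel and the $\epsilon^3 n$ coefficients combine to $\frac{68}{9} - \frac{56}{9} = \frac{4}{3}$, yielding $\text{excess}(L_1) = (1+o(1)) \frac{4}{3} \epsilon^3 n$.

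The main obstacle is the delicate precision demanded of the expansions: $|V(L_1)|$ and $|E(L_1)|$ both have leading terms of order $\epsilon n \gg \epsilon^3 n$, so their difference depends on corrections that must be controlled to relative accuracy $\epsilon^2$. In particular, the $O(n^{2/3})$ slack in Theorem~\ref{t:giant} is $o(\epsilon^3 n)$ only for $\epsilon = \omega(n^{-1/9})$, and the $O(\sqrt{\epsilon}\, n_i^L)$ imbalance between $n_1^L$ and $n_2^L$ guaranteed by Theorem~\ref{t:giant} propagates through the term $(n_1^L - n_2^L)^2 p/4$ into an error of order $\epsilon^3 n$ in $|E(G\setminus L_1)|$. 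Handling the full range $\epsilon^3 n \to \infty$ therefore requires sharper control on $n_i^L$, for instance via a two-type branching-process exploration run in parallel, or a reformulation that computes the excess through the kernel of $L_1$ and thereby sidesteps the need to estimate $|V(L_1)|$ and $|E(L_1)|$ separately. Equally non-trivial is verifying the duality principle at the level of edge-count concentration for bipartite graphs near criticality, which is where the new bipartite enumeration bounds become essential.
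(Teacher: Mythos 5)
Your plan to recover the excess from $|E(L_1)|-|V(L_1)|+1$ via Taylor expansions of $\epsilon'$ is numerically consistent (the coefficients $\frac{56}{9}$, $\frac{68}{9}$, and their difference $\frac{4}{3}$ all check out), and you correctly identify exactly why it cannot be pushed through as stated. Those obstacles are real, and they are not minor: the $O(n^{2/3})$ slack in Theorem~\ref{t:giant} swamps $\epsilon^3 n$ whenever $\epsilon = O(n^{-1/9})$, and the imbalance $|n_1^L - n_2^L| = O(\epsilon^{3/2}n)$ guaranteed by Theorem~\ref{t:giant} contributes an error of order $(\epsilon^{3/2}n)^2\cdot p = \Theta(\epsilon^3 n)$ through $(n-n_1^L)(n-n_2^L)p$, i.e.\ of the same magnitude as the answer you are trying to extract. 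Without sharper control on both $|L_1|$ and the split across $N_1,N_2$, the two large quantities $|E(L_1)|$ and $|V(L_1)|$ (each $\Theta(\epsilon n)$) cannot be estimated to the relative precision $\epsilon^2$ needed for their difference. There is a further issue you do not flag: your ``duality principle'' that $G\setminus L_1$ is distributed approximately as $G(n-n_1^L,n-n_2^L,p)$ conditioned on having no giant is exactly the symmetry rule which the paper leaves as a conjecture in Section~\ref{s:Discuss}, and the authors suggest that proving it in the bipartite setting would itself rely on Theorem~\ref{t:excess}. Using it here would be circular.

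The paper sidesteps all of this by never estimating $|E(L_1)|$ or $|V(L_1)|$ at all. It first shows via a first-moment computation with the complex-component bounds (Lemma~\ref{l:excess}) that $\text{excess}(L_1) = O(\epsilon^3 n)$ whp, and then runs a multi-round exposure: taking a geometric sequence $\epsilon_i = \omega^{0.2}n^{-1/3}(1+\omega^{-0.1})^{i-1}$, it tracks the increment $\Delta_i = a_{i+1}-a_i$ in the excess at each sprinkling step, bounding it above and below by concentrated binomials (using Harris's inequality on the way up, and bounding the spurious contribution from absorbed unicyclic components). Summing the increments and approximating by the integral of $\frac{(x+y)^2}{(1+x)^2}$, which has the closed-form antiderivative $\frac{x^2-y^2}{1+x}$, yields $\frac{4}{3}\epsilon^3 n$. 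This is essentially the ``reformulation that sidesteps the need to estimate $|V(L_1)|$ and $|E(L_1)|$ separately'' you speculate about at the end --- though via incremental sprinkling rather than through the kernel. Your proposal as written, however, has a genuine gap and would not yield the theorem over the whole range $\epsilon^3 n\to\infty$.
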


In addition, we can give a much more precise picture of the component structure of $G(n,n,p)$ near to the critical point in both the weakly subcritical and weakly supercritical regime. In what follows, let us write
\begin{equation}\label{e:definedelta}
\delta = \epsilon - \log(1+\epsilon).
\end{equation} 

Firstly, for the tree components, we show that whp there are no tree components of order significantly larger than $\frac{1}{\delta}\left(\log \left(|\epsilon|^3n\right) - \frac{5}{2}\log\log\left( |\epsilon|^3n\right)\right)$. Moreover, we show that the number of tree components of order around this tends to a Poisson distribution.

\begin{theorem}\label{t:trees} 
Let $\epsilon = \epsilon(n)$ be such that $|\epsilon|^3 n  \rightarrow \infty$ and $\epsilon = o(1)$, and let $p=\frac{1 + \epsilon}{n}$. 
\begin{enumerate}[(i)]
\item\label{i:treecomp}
Given $r_1,r_2\in \mathbb R^+$ with $r_1<r_2$ let $Y_{r_1,r_2}$ denote the number of tree components in $G(n,n,p)$ of orders between 
\[
\frac{1}{\delta}\left( \log\left( |\epsilon|^3 n\right) - \frac{5}{2} \log \log \left( |\epsilon|^3 n\right) + r_1\right) \,\,\,\,\,
\text{and} \,\,\,\,\,
\frac{1}{\delta}\left( \log\left( |\epsilon|^3 n\right) - \frac{5}{2} \log \log \left( |\epsilon|^3 n\right) + r_2\right),
\]
where $\delta$ is as in~\eqref{e:definedelta} and let  $\lambda=\lambda(r_1,r_2):= \frac{1}{\sqrt{\pi}}\left(e^{-r_1}-e^{-r_2}\right).$
Then $Y_{r_1,r_2}$ converges in distribution to $Po(\lambda)$.
\item\label{i:treecomp2}
With probability $1-e^{-\Omega(\alpha)}$, $G(n,n,p)$ contains no tree components of order larger than
\[
\frac{1}{\delta} \left( \log\left( |\epsilon|^3 n\right) - \frac{5}{2} \log \log \left( |\epsilon|^3 n\right) + \alpha \right)
\]
for any function $\alpha=\alpha(n) > 0$.
\end{enumerate}	
\end{theorem}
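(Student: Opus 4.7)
Write $X_k$ for the number of tree components of order $k$ in $G(n,n,p)$. Part (i) follows from estimating $\mathbb{E}[Y_{r_1,r_2}] = \sum_{k}\mathbb{E}[X_k]$ over the window in the statement and then invoking the method of factorial moments to promote convergence of the mean to Poisson convergence. Part (ii) follows from a Markov-type tail bound once a uniform estimate on $\mathbb{E}[X_k]$ is available.

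For the uniform estimate I would decompose by the split of the component between the partition classes. A tree component with $j$ vertices in $N_1$ and $k-j$ in $N_2$ contributes
\[
\binom{n}{j}\binom{n}{k-j}\, j^{k-j-1}(k-j)^{j-1}\, p^{k-1}(1-p)^{k(n-j)+j^2 - (k-1)}
\]
to $\mathbb{E}[X_k]$, using the classical formula $j^{k-j-1}(k-j)^{j-1}$ for the number of spanning trees of $K_{j,k-j}$ and the requirement that all bipartite edges incident to the component other than the $k-1$ tree edges are absent. Applying Stirling to the binomial coefficients and to $j^{k-j-1}(k-j)^{j-1}$ shows that, with $s := j - k/2$, the summand factors (to leading order) into a Gaussian in $s$ of variance $\Theta(k)$ times a factor depending only on $k$. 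A Laplace-type evaluation of the resulting integral together with the identity $(1+\epsilon)e^{-\epsilon} = e^{-\delta}$ produces a uniform asymptotic of the form $\mathbb{E}[X_k] = (1+o(1))\,C\,n\,k^{-5/2}e^{-k\delta}$ for an explicit constant $C$, valid throughout the range of $k$ relevant to both parts of the theorem. Substituting $k = k_0 + r/\delta$, where $k_0 = \delta^{-1}\bigl(\log(|\epsilon|^3 n) - \tfrac{5}{2}\log\log(|\epsilon|^3 n)\bigr)$, and converting the sum over $k$ to an integral over $r$ (using $\delta \sim \epsilon^2/2$) then yields $\mathbb{E}[Y_{r_1,r_2}] \to \lambda$, with the $\tfrac{1}{\sqrt\pi}(e^{-r_1}-e^{-r_2})$ shape arising from the Gaussian integration.

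For Poisson convergence in (i) I would compute the factorial moments $\mathbb{E}[(Y_{r_1,r_2})_t]$ for each fixed $t$, which count ordered $t$-tuples of vertex-disjoint tree components of prescribed orders in the given window. Since every such component has order $o(n)$, the joint existence probability factorises up to a $1+o(1)$ correction (losing only a negligible fraction of vertex choices to pairwise disjointness), giving $\mathbb{E}[(Y_{r_1,r_2})_t] \to \lambda^t$; this is the standard criterion for convergence to $\mathrm{Po}(\lambda)$. For part (ii), setting $k^{\alpha} = \delta^{-1}\bigl(\log(|\epsilon|^3 n)- \tfrac{5}{2}\log\log(|\epsilon|^3 n) + \alpha\bigr)$, the asymptotic above gives $\mathbb{E}[X_{k^{\alpha}}] = O(e^{-\alpha})$, and the geometric decay of $\mathbb{E}[X_k]$ in $k$ yields $\mathbb{E}\bigl[\sum_{k\ge k^{\alpha}} X_k\bigr] = O(e^{-\Omega(\alpha)})$, from which Markov's inequality finishes the proof. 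The main obstacle is the first step: the bipartite setup forces a nontrivial convolution over the split parameter $j$, and extracting the correct absolute constant in front of $nk^{-5/2}e^{-k\delta}$ requires careful bookkeeping of both the Hessian of $(k-j)\log j + j\log(k-j)$ at $j=k/2$ (which governs the Gaussian width) and the Stirling corrections to $\binom{n}{j}\binom{n}{k-j}$; once this estimate is in hand, the factorial-moment and tail-bound steps are largely routine.
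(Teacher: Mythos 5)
Your plan is essentially the paper's argument: use Scoins' formula $C(i,j,-1)=i^{j-1}j^{i-1}$ to write $\mathbb{E}[X_k]$ as a sum over the split parameter, evaluate the convolution by a Gaussian/Laplace approximation around $j=k/2$ (the paper packages this as Lemma~\ref{l:difference} after reparameterising over $d=j-i$), convert the $k$-sum to an integral in the window to obtain $\lambda$, and finish Part~(i) by factorial moments. The gain in precision from the $\Theta(\sqrt k)$-wide Gaussian over the split is exactly what gives the same $\tfrac{1}{\sqrt\pi}(e^{-r_1}-e^{-r_2})$ constant as in $G(n,p)$, and the near-independence argument for the higher factorial moments is correct and matches the paper.

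The one genuine gap is the claim that a single asymptotic $\mathbb{E}[X_k]=(1+o(1))\,C\,n\,k^{-5/2}e^{-k\delta}$ is ``valid throughout the range of $k$ relevant to both parts.'' This is false once $k$ is large. Expanding $\binom{n}{i}\binom{n}{j}(1-p)^{kn-ij-k+1}$ carefully produces, beyond the Gaussian in $i-j$, a factor
\[
\exp\!\Bigl(-\tfrac{i^3+j^3}{6n^2}+\tfrac{\epsilon ij}{n}+O(\tfrac kn)\Bigr),
\]
and since $\tfrac{\epsilon ij}{n}\le\tfrac{\epsilon k^2}{4n}$ can be comparable to, or even exceed, $\delta k\approx \tfrac{\epsilon^2 k}{2}$ once $k\gtrsim \epsilon n$, the exponent $-\delta k+\tfrac{\epsilon k^2}{4n}$ is not monotonically decreasing in $k$: your stated bound does not decay geometrically over the whole range $k\le n$ that Part~(ii) must cover (recall $\alpha$ can be any positive function, so $k$ can be as large as $n$). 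The argument is rescued only by the negative cubic term $-\tfrac{i^3+j^3}{6n^2}\le -\tfrac{k^3}{24n^2}$ coming from the falling factorials; one must show
\[
-\delta k+\tfrac{\epsilon k^2}{4n}-\tfrac{k^3}{24n^2}\le -\tfrac{\delta k}{5}\quad\text{for all }k\le n,
\]
which is a parabola-in-$k$ maximisation (the maximum is at $k=3\epsilon n$). You should incorporate this cubic correction explicitly and verify that inequality; otherwise the exponential tail bound in Part~(ii), and hence the Markov step, does not go through for the full range of $\alpha$. For Part~(i) your estimate is fine as written, since there $k=\Theta(\delta^{-1}\log(|\epsilon|^3 n))=o(n^{2/3})$ and all correction terms are $o(1)$.
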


Secondly, for the unicyclic components, we show that whp there are no unicyclic components of order significantly larger than $\frac{1}{\delta}$, and moreover, that the number of unicyclic components of order around this again tends to a Poisson distribution.
\begin{theorem}\label{t:unicyclic}
Let  $\epsilon = \epsilon(n)$ be such that $|\epsilon|^3 n  \rightarrow \infty$ and $\epsilon = o(1)$, and let $p=\frac{1 + \epsilon}{n}$. 
\begin{enumerate}[(i)]
\item\label{i:unicomp}
Given $u_1,u_2\in \mathbb R^+$ with $u_1<u_2$ let $Z_{u_1,u_2}$ denote 
the number of unicyclic components in $G(n,n,p)$ of orders between
\[
\frac{u_1}{\delta} \,\,\,\text{    and    }\,\,\,   \frac{u_2}{\delta}, 
\]
where $\delta$ is as in~\eqref{e:definedelta} and let $\nu=\nu(u_1,u_2):=\frac{1}{2}\int_{u_1}^{u_2}\frac{\exp(-t)}{t}dt.$
Then  $Z_{u_1,u_2}$ converges in distribution to $Po(\nu)$.
\item\label{i:unicomp2}
With probability $1-e^{-\Omega(\alpha)}$, $G(n,n,p)$ contains no unicyclic components of order larger than $\frac{\alpha}{\delta}$ for any function $\alpha=\alpha(n) > 1$.
\end{enumerate}	
\end{theorem}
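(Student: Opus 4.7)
I apply the method of factorial moments. For part (i) it suffices to show that for every fixed $r \ge 1$ the factorial moment $\mathbb{E}[(Z_{u_1,u_2})_r]$ converges to $\nu^r$; by a standard criterion this yields $Z_{u_1,u_2} \xrightarrow{d} \mathrm{Po}(\nu)$. Part (ii) will follow by Markov's inequality applied to the same first-moment estimate extended into the tail $k \ge \alpha/\delta$.

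\emph{First moment.} Letting $U(a,b)$ denote the number of connected unicyclic bipartite graphs on a given pair of vertex sets of sizes $a$ and $b$,
\begin{equation*}
\mathbb{E}[Z_{u_1,u_2}] \;=\; \sum_{\substack{(a,b)\\ a+b \in [u_1/\delta,\,u_2/\delta]}} \binom{n}{a}\binom{n}{b}\, U(a,b)\, p^{a+b}\, (1-p)^{a(n-b)+b(n-a)-(a+b)}.
\end{equation*}
Using the bipartite analogue of Cayley's formula together with the bipartite graph counting bounds developed earlier in the paper, Stirling's approximation, and the identity $(1+\epsilon)e^{-\epsilon} = e^{-\delta}$, the inner sum over $(a,b)$ with fixed $a+b=k$ should simplify to $(1+o(1))\tfrac{1}{2k}\,e^{-\delta k}$. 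Substituting $t = \delta k$ turns the outer sum into a Riemann sum for $\tfrac{1}{2}\int_{u_1}^{u_2}\tfrac{e^{-t}}{t}\,dt = \nu$.

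\emph{Higher moments.} For $\mathbb{E}[(Z_{u_1,u_2})_r]$ I sum over ordered $r$-tuples of pairwise disjoint bipartite vertex subsets $(S_1^{(i)}, S_2^{(i)})$. The joint probability that all $r$ sets are unicyclic components factorises into a product of single-component probabilities up to corrections from the shrinking binomial coefficients and from non-edges counted between distinct tuples. Since for fixed $r$ the total number of vertices used is $O(r/\delta) = O(\epsilon^{-1}) = o(n)$, these corrections contribute only a factor $1+o(1)$, giving $\mathbb{E}[(Z_{u_1,u_2})_r] = (1+o(1))\,\nu^r$, as required.

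\emph{Part (ii) and main obstacle.} Applying the first-moment estimate over the range $k \ge \alpha/\delta$ gives
\[
\mathbb{E}\bigl[\#\{\text{unicyclic components of order} \ge \alpha/\delta\}\bigr] \;\le\; \sum_{k \ge \alpha/\delta} \tfrac{C}{k}\,e^{-\delta k} \;=\; e^{-\Omega(\alpha)},
\]
and Markov's inequality yields (ii). The main obstacle is establishing the per-size asymptotic $(1+o(1))\tfrac{1}{2k}e^{-\delta k}$ for the inner sum uniformly in $k$, across both the Poisson window in (i) and the tail in (ii). This requires an accurate estimate of $U(a,b)$ for all admissible $(a,b)$, together with a Laplace-type argument showing that the sum over $a+b=k$ is dominated by nearly balanced splits $a \approx b \approx k/2$; this is precisely where the new bipartite counting bounds advertised in the introduction do the essential work.
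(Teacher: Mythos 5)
Your overall strategy (factorial moments, a per-size asymptotic for $\mathbb{E}[Z_{u_1,u_2}]$, and a Riemann-sum limit) matches the paper exactly, and your higher-moment argument and Markov step are the same in spirit. However, the ``main obstacle'' you flag at the end is a genuine gap, and the way you propose to close it is not quite right.

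First, the accurate asymptotic for $C(i,j,0)$ derived in the paper (Theorem~\ref{t:unicyliccomponents}) only holds for \emph{balanced} pairs, $\tfrac12 \le i/j \le 2$. Your Laplace-type argument for the inner sum over $i+j=k$ would require a comparably precise estimate of $C(i,j,0)$ for \emph{all} splits, including very skewed ones, and that is exactly what is unavailable. The paper's resolution is not enumerative but probabilistic: Lemma~\ref{l:balanced} shows that with probability $1-e^{-\Omega(s_1)}$ there are no unbalanced non-tree components of order $\ge s_1 = u_1/\delta$ at all, so one may replace $Z_{u_1,u_2}$ by its balanced version $Z'_{u_1,u_2}$, restrict the inner sum to $B_k$, and then use the asymptotic for $C(i,j,0)$ on that range together with the Gaussian summation lemma (Lemma~\ref{l:difference}). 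Without some version of this reduction, the ``counting bounds advertised in the introduction'' do not, on their own, give you what you need for skewed $(a,b)$.

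Second, the claimed per-size asymptotic $(1+o(1))\tfrac{1}{2k}e^{-\delta k}$ ``uniformly in $k$ across both the Poisson window and the tail'' is false for $k$ on the scale of $\epsilon n$. The exact formula for $\mathbb{E}(X(i,j,0))$ carries correction terms $\exp(\tfrac{\epsilon ij}{n} - \tfrac{i^3+j^3}{6n^2} + \dots)$, and once $k \asymp \epsilon n$ the term $\tfrac{\epsilon k^2}{4n}$ is of the same order as $\delta k$ and cannot be absorbed into a $(1+o(1))$ factor. The paper therefore does not prove a uniform asymptotic; for the tail it only needs the one-sided bound $\mathbb{E}(Z'_{\ge\alpha}) = O(\sum_{k\ge \alpha/\delta} k^{-1} e^{-\delta k/5})$, obtained from the parabola estimate $-\delta k + \tfrac{\epsilon k^2}{4n} - \tfrac{k^3}{24n^2} \le -\tfrac{\delta k}{5}$, and then the exponential-integral bound $E_1(x) \le e^{-x}\log(1+1/x)$ to get $e^{-\Omega(\alpha)}$. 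Your tail bound $\sum_{k\ge\alpha/\delta}\tfrac{C}{k}e^{-\delta k}$ is in the right spirit, but you need the weaker exponent $e^{-\delta k/5}$ (or an explicit restriction $k = o(\epsilon n)$ plus a separate argument for larger $k$) to make the step sound.

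Subject to filling these two steps, the rest of your outline (the substitution $t=\delta k$ turning the sum into $\tfrac12\int_{u_1}^{u_2}\tfrac{e^{-t}}{t}\,dt$, and the factorial-moment factorisation for fixed $r$, using that $O(r/\delta)=o(n)$ vertices are involved) is the same as the paper's proof.
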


Finally, we show that there are whp no complex components of order at most $n^{\frac{2}{3}}$, and in fact no complex components at all in the weakly subcritical regime.

\begin{theorem}\label{t:complex}
Let $\epsilon = \epsilon(n)$ be such that $|\epsilon|^3 n  \rightarrow \infty$ and $\epsilon = o(1)$, and let $p=\frac{1 + \epsilon}{n}$. 
\begin{enumerate}[(i)]
\item\label{i:complexcomp}
If $\epsilon < 0$, then with probability $1 - O\left(\left(|\epsilon|^3 n\right)^{-1}\right)$, $G(n,n,p)$ contains  no complex components.
\item\label{i:complexcomp2}
If $\epsilon > 0$, then with probability $1 - O\left(\left(\epsilon^3 n\right)^{-1}\right)$, $G(n,n,p)$ contains no complex components of order at most $n^{\frac{2}{3}}$.
\end{enumerate}		
\end{theorem}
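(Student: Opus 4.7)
Both parts will follow from a first-moment calculation. Let $X_{k_1, k_2, \ell}$ denote the number of components of $G(n,n,p)$ with $k_1$ vertices in $N_1$, $k_2$ vertices in $N_2$ and $k_1+k_2+\ell$ edges (so excess $\ell$), and let $C(k_1, k_2, \ell)$ be the number of connected bipartite graphs on prescribed partition classes of those sizes with $k_1+k_2+\ell$ edges. Since complex components are exactly those with $\ell \geq 1$, the union bound gives
\begin{equation*}
\mathbb{E}[X_{k_1, k_2, \ell}] \,\leq\, \binom{n}{k_1}\binom{n}{k_2}\, C(k_1, k_2, \ell)\, p^{k_1+k_2+\ell}\, (1-p)^{k_1(n-k_2)+k_2(n-k_1)},
\end{equation*}
and by Markov's inequality it suffices to bound $\sum_{\ell \geq 1}\sum_{k_1,k_2\geq 1}\mathbb{E}[X_{k_1,k_2,\ell}]$ by $O((|\epsilon|^3 n)^{-1})$ in part~(i), and the analogous sum restricted to $k_1+k_2<n^{2/3}$ by $O((\epsilon^3 n)^{-1})$ in part~(ii).

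The next step is to substitute the bipartite enumeration bounds on $C(k_1,k_2,\ell)$ promised in the abstract, together with Stirling's formula and the expansion $\log(1+\epsilon)=\epsilon-\epsilon^2/2+O(\epsilon^3)$. Standard manipulations factor each expectation into an exponential $e^{-\delta(k_1+k_2)}$, with $\delta$ as in~\eqref{e:definedelta} satisfying $\delta \asymp \epsilon^2 > 0$ for both signs of $\epsilon$, a polynomial prefactor in $k_1, k_2$, and an ``excess factor'' whose leading dependence in $\ell$ is a power of $k_1 k_2/n$. Reducing the inner sum at fixed $k = k_1+k_2$ to the near-symmetric regime $k_1 \approx k_2 \approx k/2$ leaves a double sum over $k$ and $\ell$.

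In part~(i) the factor $e^{-\delta k}$ concentrates the sum near $k \asymp \epsilon^{-2}$, where the excess factor contributes a power of $1/(|\epsilon|^4 n)$, so summing the resulting geometric series over $\ell \geq 1$ produces the required bound $O((|\epsilon|^3 n)^{-1})$. In part~(ii) the exponential still decays in $k$ but the dominant contribution shifts; the restriction $k < n^{2/3}$ keeps the excess factor of order $n^{-\ell/3}$ at the boundary, and a saddle-point analysis near $k \asymp \epsilon^{-2}$ combined with a boundary estimate at $k \asymp n^{2/3}$, using $\epsilon^3 n \to \infty$, bounds the total by $O((\epsilon^3 n)^{-1})$. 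The main obstacle will be extracting enumeration bounds on $C(k_1,k_2,\ell)$ that are sharp simultaneously in the vertex and excess parameters so that the double sum converges uniformly over all relevant $k$ and geometrically in $\ell$, particularly near the cutoff $k \approx n^{2/3}$ in part~(ii) where the exponential damping in $k$ is weakest.
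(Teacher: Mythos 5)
Your plan for Part (ii) matches the paper's proof quite closely: reduce to balanced pairs, apply the bipartite enumeration bound on $C(i,j,\ell)$ (the paper's Theorem~\ref{t:complexcomponents}), and run the first-moment argument over $k \le n^{2/3}$. The paper implements the ``reduce to balanced'' step via Lemma~\ref{l:balanced}\eqref{i:unbalancedcomplex}, which shows that with probability $1-O(n^{-1})$ there are no unbalanced complex components at all, and then uses Theorem~\ref{t:complexexpectation} together with Lemma~\ref{l:difference} to evaluate the sum over balanced $(i,j)$. So for (ii) you have the right argument, modulo spelling out that reduction.

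For Part (i) your approach genuinely diverges from the paper, and this divergence matters. You propose running the same first-moment calculation with the $C(i,j,\ell)$ bounds over all orders $k\le 2n$. The paper instead observes that every complex component contains a connected subgraph of excess exactly two, and that any such subgraph is a path with one extra edge from each endpoint back into the path. Counting these substructures directly (choosing a set of path vertices in each class, an ordering of the path, and the two return edges) gives $\mathbb{E}(Q) \le \frac{1}{|\epsilon|^3 n}$ with no enumeration lemmas at all. This is both shorter and more robust: it needs neither the bound on $C(i,j,\ell)$ (which the paper only proves for balanced $\frac12 \le i/j \le 2$) nor any balanced-versus-unbalanced dichotomy. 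Your route has a real gap precisely there: Theorem~\ref{t:complexcomponents} is proved only under the balance constraint, and the balanced/unbalanced reduction Lemma~\ref{l:balanced} is stated only for $\epsilon > 0$; you would have to re-derive an analogue for $\epsilon < 0$ before ``reducing to the near-symmetric regime'' as you suggest. (Incidentally, at the peak $k\asymp \epsilon^{-2}$ the excess factor for $\ell=1$ scales as $(k^3/n^2)^{1/2}\asymp 1/(|\epsilon|^3 n)$, not $1/(|\epsilon|^4 n)$; your final answer is right but the intermediate bookkeeping is off.) With those gaps filled your calculation for (i) would likely go through, but the paper's excess-2-subgraph argument is the cleaner move.
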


\subsection{Key proof ideas} 
As opposed to previous results concerning the phase transition in the binomial random bipartite graph, such as \cite{Johansson}, \cite{Kangmultitype} and \cite{Bollobasphase}, which analyse this model by comparison to branching processes, our approach is at heart based on enumerative methods, following the work of Bollob\'{a}s \cite{Bollobas} and {\L}uczak \cite{Luczak}. That is, we first derive estimates for the number of connected bipartite graphs with a fixed number of vertices and edges, and use these to bound the expectation, and higher order moments, of the number of components of various types in $G(n,n,p)$. This will allow us to describe the distribution of the \emph{small} components in $G(n,n,p)$, and in particular Theorems \ref{t:trees}--\ref{t:complex} will follow from such considerations. Furthermore, we can also bound quite precisely the number of vertices contained in \emph{large} components in $G(n,n,p)$, those of order at least $n^{\frac{2}{3}}$. It can then be shown using a standard sprinkling argument that whp there is a unique large component $L_1$ containing all these vertices. Given the order of $L_1$, we can again use these enumerative estimates to give a weak bound on its excess, which we can then bootstrap to an asymptotically tight bound via a multi-round exposure argument. This turns out to be quite a delicate argument, and in particular we make use of a correlation inequality of Harris. The main difficulties here, as opposed to the case of $G(n,p)$, come from the fact that the components of a fixed order can be split in various different ways across the partition classes, making the combinatorial expressions for the expected number of such components much harder to estimate or evaluate.

In order to derive these estimates for the number of connected bipartite graphs with a fixed number of vertices and edges we will use some standard enumerative tools, as well as the so-called \emph{core and kernel} method used by Bollob\'{a}s \cite{Bollobassparse} and {\L}uczak \cite{Luczak}. We will find that it is much easier to count the bipartite graphs whose partition classes have relatively equal sizes, which we call \emph{balanced}, and in this case we obtain effective bounds. Since these enumerative results translate directly into bounds on the moments of the number of components with a fixed number of vertices and edges in $G(n,n,p)$, we will often have to split such calculations into two parts depending on whether these components are balanced or not. In the latter case it is then necessary to obtain tighter probabilistic bounds to account for the weaker enumerative bounds.

The benefit in working directly with these enumerative results is in the increased accuracy, allowing for much finer control over the structure of $G(n,n,p)$ in the weakly supercritical regime. For this reason, these estimates may be useful in order to apply similar methods to study the structure of $G(n,n,p)$ in this regime in more detail. For example, in the case of $G(n,p)$, {\L}uczak \cite{Luczakcycle} used similar ideas to describe the distribution of cycles in $G(n,p)$ in this regime, and more recently, using some of these ideas, Dowden, Kang and Krivelevich \cite{Kang} were able to determine asymptotically the genus of $G(n,p)$ in this regime. It is possible that similar ideas could be applied to $G(n,n,p)$, for example to study the distribution of cycles, the length of the longest cycle, or the genus in this model.

\subsection{Overview of the paper}
The rest of the article is organised as follows. In Section \ref{pre}, we collect some preliminary results which are used later in the paper. In Section \ref{s:compt} we derive bounds for the expected number of components of $G(n,n,p)$ with a fixed order and excess, which form the foundation of many of the calculations in this paper. These bounds depend on good estimates for the number of bipartite graphs with a fixed number of vertices and edges, whose proofs we give in Section \ref{s:counting}. In Section \ref{s:components}, we use these estimates to study the distribution of components in $G(n,n,p)$ and prove Theorems \ref{t:trees}--\ref{t:complex}. Then, in Section \ref{s:giant}, using the previous results, we investigate the size of the largest components and prove Theorem \ref{t:giant}. Using this, we then determine the excess of the giant component and prove Theorem \ref{t:excess} in Section \ref{s:excess}. Finally, in Section \ref{s:Discuss}, we discuss possible extensions of our results, formulate a conjecture, and give some open problems.

\section{Preliminaries}\label{pre}
Unless stated otherwise, all the asymptotics in this paper are taken as $n \rightarrow \infty$. In particular, we write 
\[
f(n) \approx g(n) \text{ if } f = (1+o(1))g; \qquad f(n) \lesssim g(n) \text{ if } f \leq (1+o(1)) g;\text{ and } \qquad  f(n)\gtrsim g(n) \text{ if } f\geq (1+o(1))g.
\]
Furthermore, we write that $f(n) \gg g(n)$ if $f(n) \geq C g(n)$ for an implicit large constant $C$. We write $\mathbb{N}$ for the set of positive integers, so that in particular $0 \not\in \mathbb{N}$.

We will often need the following elementary estimates on the size of the falling factorial, which hold for any $i, n \in \mathbb{N}$ with $i\leq n$
\begin{align}\label{e:fallingfactorial}
		(n)_i:= \prod_{j=0}^{i-1} (n-j) = n^i\exp\left(-\frac{i(i-1)}{2n}-\frac{i(i-1)(2i-1)}{12n^2} + O\left(\frac{i^4}{n^3}\right)\right),
\end{align}
and also
\begin{align}\label{e:fallingfactorialbound}
		(n)_i&\leq  n^i\exp\left(-\frac{(i-1)^2}{2n}-\frac{i(i-1)(2i-1)}{12n^2} \right)\leq  n^i\exp\left(-\frac{(i-1)^2}{2n} \right).
\end{align}

The following result of Spencer
\cite{Spencer} is a useful tool for relating integrals and sums.
\begin{lemma}[{\cite[Theorem 4.3]{Spencer}}]\label{l:spencer}
Let $a < b$ be integers, let $f(x)$ be an integrable function in $[a-1,b+1]$, and let $S := \sum_{i=a}^b f(i)$ and $I := \int_a^b f(x) dx$. Let $M$ be such that $|f(x)| \leq M$ for all $x \in [a-1,b+1]$ and suppose that $[a-1,b+1]$ can be broken into at most $r$ intervals such that $f(x)$ is monotone on each. Then
\[
|S-I| \leq 6rM.
\]
\end{lemma}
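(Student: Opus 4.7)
The plan is to prove the bound by fixing a partition of $[a-1,b+1]$ into at most $r$ monotone pieces, then comparing $f(i)$ to $\int_i^{i+1} f(x)\,dx$ on each unit sub-interval, distinguishing sub-intervals that lie inside a single monotone piece from those that straddle a breakpoint between pieces.

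First I would write $I = \sum_{i=a}^{b-1} \int_i^{i+1} f(x)\,dx$, so that
\[
S - I \;=\; f(b) \;+\; \sum_{i=a}^{b-1}\!\left(f(i) - \int_i^{i+1} f(x)\,dx\right).
\]
I would then call an index $i \in \{a,\ldots,b-1\}$ \emph{good} if the unit interval $[i,i+1]$ lies inside a single piece of the chosen monotone partition, and \emph{bad} otherwise. Since the $r$ pieces have at most $r-1$ internal breakpoints, at most $r-1$ indices are bad, and for each of them the trivial estimate $\bigl|f(i) - \int_i^{i+1} f(x)\,dx\bigr| \leq |f(i)| + M \leq 2M$ contributes at most $2(r-1)M$ in total.

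For a good index $i$, the monotonicity of $f$ on $[i,i+1]$ forces $\int_i^{i+1} f(x)\,dx$ to lie between $f(i)$ and $f(i+1)$, so $\bigl|f(i) - \int_i^{i+1} f(x)\,dx\bigr| \leq |f(i) - f(i+1)|$. The key observation is that within each fixed monotone piece the good indices form a contiguous range $\{n_1,\ldots,n_2\}$, and since $f$ is monotone on $[n_1,n_2+1]$ the differences $f(i) - f(i+1)$ all have the same sign; hence they telescope to $|f(n_1) - f(n_2+1)| \leq 2M$ per piece, giving at most $2rM$ across all $r$ pieces. Adding the boundary term $|f(b)| \leq M$ yields
\[
|S - I| \;\leq\; M + 2rM + 2(r-1)M \;=\; (4r-1)M \;\leq\; 6rM,
\]
as required.

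The argument is essentially elementary and I do not foresee a serious obstacle; the only point requiring care is fixing the partition into monotone pieces at the outset so that the good/bad dichotomy is unambiguous, and checking that the good indices really do form a contiguous block in each piece (so that the telescoping is lossless and one does not double-count the boundary-straddling intervals).
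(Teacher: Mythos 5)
Your proof is correct and self-contained. The paper does not supply its own argument for this lemma --- it is simply quoted from Spencer --- so there is nothing in the text to compare against, but the decomposition you use (write $I$ as a sum of unit integrals, absorb $f(b)$ separately, classify unit intervals as good or bad according to whether they straddle a breakpoint of a fixed monotone decomposition, use the trivial $2M$ bound on bad intervals and a telescoping bound within each monotone piece) is exactly the standard elementary route, and all the details check out: bad indices number at most $r-1$ (one per internal breakpoint of the decomposition), good indices within a piece form a contiguous range so the telescoping is lossless, and $\bigl|f(i) - \int_i^{i+1} f\bigr| \le |f(i) - f(i+1)|$ on good intervals follows from $\int_i^{i+1} f$ lying between $f(i)$ and $f(i+1)$ by monotonicity. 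Your tally $M + 2rM + 2(r-1)M = (4r-1)M$ in fact improves slightly on the stated $6rM$, which is harmless. The only cosmetic remark is that you never actually use the extension of the hypotheses to $[a-1,a]$ and $[b,b+1]$; your argument only needs boundedness and the monotone decomposition on $[a,b]$, which of course follows from the stated hypotheses on the larger interval.
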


Often, when calculating certain expected values, we will need an asymptotic expression for sums of the following form, whose proof we relegate to Appendix \ref{s: Appendix A}.
\begin{lemma}\label{l:difference}
Let $m\geq 0$ be constant and let $L=L(n)$ and $k=k(n)$  be such that $L+1 \leq k \leq n$, $L = \omega(1)$ and $k = o(n)$. Then
\[
S:=\sum_{d=-L}^{L}\frac{1}{(k^2-d^2)^m}\left(\frac{k-d}{k+d}\right)^{d}\exp\left(-\frac{d^2}{2n}\right) \approx \sqrt{\frac{\pi}{2}}k^{\frac{1}{2}-2m}.
\]
\end{lemma}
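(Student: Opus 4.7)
The plan is to approximate $S$ by a Gaussian integral, exploiting the concentration of the summand around $d=0$. Using $\log\frac{1-u}{1+u} = -2u - \frac{2u^3}{3} + O(u^5)$ valid for $|u|<1$, one obtains
\begin{equation*}
d \log\frac{k-d}{k+d} = -\frac{2d^2}{k} - \frac{2d^4}{3k^3} + O\!\left(\frac{d^6}{k^5}\right) \quad \text{and} \quad (k^2-d^2)^{-m} = k^{-2m}\left(1+O\!\left(\frac{d^2}{k^2}\right)\right),
\end{equation*}
which yields a clean Gaussian form for the summand as long as $|d|$ is not too large.

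I would pick a truncation $M=\phi\sqrt{k}$ with $\phi=\phi(n)\to\infty$ slowly enough that $\phi^4/k=o(1)$, $\phi^2 k/n=o(1)$ and $\phi^2/k=o(1)$; such a $\phi$ exists because $k\to\infty$ (forced by $L=\omega(1)$ and $L+1\leq k$) and $k=o(n)$. On the range $|d|\leq M$ the three error contributions above, together with $d^2/(2n)=o(1)$, are all uniformly $o(1)$, so
\begin{equation*}
f(d) := \frac{1}{(k^2-d^2)^m}\left(\frac{k-d}{k+d}\right)^d\exp\!\left(-\frac{d^2}{2n}\right) = (1+o(1))\, k^{-2m}\exp\!\left(-\frac{2d^2}{k}\right)
\end{equation*}
uniformly. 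Applying Lemma~\ref{l:spencer} to $g(x)=e^{-2x^2/k}$ on $[-M,M]$ (bounded by $1$, monotone on each of $[-M,0]$ and $[0,M]$) and then substituting $u=\sqrt{2/k}\,x$ yields $\sum_{|d|\leq M} e^{-2d^2/k} = (1+o(1))\sqrt{\pi k/2}$, so the main block contributes $(1+o(1))\sqrt{\pi/2}\,k^{1/2-2m}$.

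For the tail $M<|d|\leq L$ I would split at $|d|=k/2$. For $M<|d|\leq k/2$ we have $(k^2-d^2)^{-m}\leq(3k^2/4)^{-m}=O(k^{-2m})$, and the elementary bound $\log\frac{1-u}{1+u}\leq -2u$ for $u\in(0,1)$ (the higher-order terms being negative) gives $(\frac{k-d}{k+d})^d\leq e^{-2d^2/k}$; a standard Gaussian tail bound then yields $\sum_{M<|d|\leq k/2} f(d) = O\!\left(k^{1/2-2m}\,\phi^{-1}e^{-2\phi^2}\right)=o(k^{1/2-2m})$. The range $|d|>k/2$ contributes at most $k\cdot k^{-m}e^{-k/2}$, which is superexponentially small. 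Combining everything gives $S\approx\sqrt{\pi/2}\,k^{1/2-2m}$.

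The main difficulty is purely a bookkeeping one: $\phi$ must be picked so that all three error sources (the $d^4/k^3$ term from the logarithm expansion, the rational factor $(1-d^2/k^2)^{-m}$, and the factor $e^{-d^2/(2n)}$) are simultaneously $o(1)$ on $|d|\leq M$, while $M$ is still large enough that the truncated Gaussian retains essentially all of its mass. The hypotheses $k\to\infty$ and $k=o(n)$ are both used essentially in arranging these compatible choices.
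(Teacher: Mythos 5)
Your proof is correct and reaches the result by the same Laplace-type Gaussian approximation as the paper, but the place where Lemma~\ref{l:spencer} is invoked is genuinely different, and the swap buys you something. The paper applies Spencer directly to the full summand $g(y)$, which forces it to verify unimodality of $g$ on $[-L-1,L+1]$ by explicitly computing $h'(y),h''(y),h'''(y)$ for $h=\log g$; it then Taylor-expands $h$ to second order inside $I=\int_{-L}^L g$, truncating at $|y|=k^{3/5}$. You instead first expand $\log\frac{k-d}{k+d}$ and the rational prefactor to replace the summand by $(1+o(1))\,k^{-2m}e^{-2d^2/k}$ uniformly on the central block $|d|\leq\phi\sqrt{k}$, and only then invoke Spencer --- now on the elementary function $e^{-2x^2/k}$, whose monotonicity on the two half-lines is free. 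This eliminates the derivative bookkeeping entirely. Your tail estimates are structurally parallel to the paper's (both rest on $d\log\frac{k-d}{k+d}\leq -2d^2/k$ plus a Gaussian tail), though you split once more at $|d|=k/2$ to control the singularity of $(k^2-d^2)^{-m}$ near $|d|=k$, which is a sensible precaution. One subtlety, which the paper shares and which is harmless in every application (there $L\asymp k$): the hypothesis $L=\omega(1)$ alone does not guarantee $L\geq\phi\sqrt{k}$, which your central-block argument needs; the paper's proof has the same latent issue, since extending the truncated Gaussian integral over $[-R,R]$ with $R=\min\{k^{3/5},L\}$ to $(-\infty,\infty)$ really requires $R/\sqrt{k}\to\infty$, not merely $R\to\infty$.
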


We will use the following Chernoff type bounds on the tail probabilities of the binomial distribution, see e.g., \cite[Appendix A]{Alon}.
\begin{lemma}\label{l:chernoff}
Let $n \in \mathbb{N}$, let $p \in [0,1]$, and let $X \sim \text{Bin}(n,p)$. Then for every positive $a$ with $a \leq \frac{np}{2}$,
\[
\mathbb{P}\left(\left|X -np \right| > a\right) < 2 \exp\left(-\frac{a^2}{4np} \right).
\]
\end{lemma}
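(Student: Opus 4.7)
The plan is to prove this via the standard exponential Markov (Chernoff) method, then verify that the optimized bound simplifies to $\exp(-a^2/(4np))$ in the stated range $a \leq np/2$. Since the statement is symmetric and standard, the obstacle is not the argument itself but the clean handling of the constant in the exponent.

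First, I would handle the upper tail $\mathbb{P}(X \geq np+a)$. For any $t > 0$, Markov's inequality applied to $e^{tX}$ gives
\[
\mathbb{P}(X \geq np+a) \leq e^{-t(np+a)} \mathbb{E}[e^{tX}] = e^{-t(np+a)}\bigl(1-p+pe^{t}\bigr)^{n} \leq \exp\!\bigl(np(e^{t}-1) - t(np+a)\bigr),
\]
where the last step uses $1+x \leq e^x$. The right-hand side is minimized at $t = \log(1 + a/(np))$, yielding
\[
\mathbb{P}(X \geq np+a) \leq \exp\!\bigl(-np\,h(a/(np))\bigr), \qquad h(x) := (1+x)\log(1+x) - x.
\]

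Next, I would show that $h(x) \geq x^2/4$ for $x \in [0, 1/2]$, which will cover the hypothesis $a \leq np/2$. Writing out the Taylor series $h(x) = x^2/2 - x^3/6 + x^4/12 - \cdots$, one checks term by term (grouping consecutive terms) that $h(x) \geq x^2/2 - x^3/6 = (x^2/6)(3-x)$, which is at least $x^2/4$ on $[0, 1/2]$. Substituting $x = a/(np)$ yields $\mathbb{P}(X \geq np+a) \leq \exp(-a^2/(4np))$.

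For the lower tail, an entirely analogous computation with $t < 0$ and the function $h_{-}(x) = (1-x)\log(1-x) + x$ (for $x \in [0,1]$) gives $\mathbb{P}(X \leq np-a) \leq \exp(-np\,h_{-}(a/(np)))$, and since $h_{-}(x) \geq h(x) \geq x^2/4$ on $[0,1/2]$ (the lower-tail function is actually larger), the same bound $\exp(-a^2/(4np))$ holds. Combining the two tails via a union bound produces the factor of $2$ and completes the proof. The only delicate step is confirming the constant $1/4$ in the exponent across the range $a \leq np/2$; a larger constant (such as the $1/3$ or $1/2$ appearing in some textbook formulations) would require a slightly tighter analysis of $h$, but $1/4$ follows from the elementary Taylor estimate above.
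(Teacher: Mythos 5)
The paper does not prove this lemma; it simply cites it as a known Chernoff-type bound from Alon and Spencer (see the text immediately preceding the statement). Your proof is the standard exponential-moment argument: bound the moment generating function by $\exp(np(e^t-1))$, optimize $t$, reduce to the entropy-type function $h(x)=(1+x)\log(1+x)-x$ (resp.\ $h_-(x)=(1-x)\log(1-x)+x$), and then show $h(x)\geq x^2/4$ on $[0,1/2]$ via the alternating series $h(x)=x^2/2-x^3/6+\cdots$. This is correct and complete. Two small remarks: (a) the lemma asserts a strict inequality; your argument does deliver it because $x^2/2 - x^3/6 = (x^2/6)(3-x) > x^2/4$ strictly for $0<x<3/2$, but you may want to say so explicitly; (b) for the lower tail one also needs $a \leq np$ so that $1-a/(np)\geq 0$, which is automatic from $a\leq np/2$.
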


We will also need to use the following correlation inequality, which follows from an inequality of Harris \cite{Harris}, which is itself a special case of the FKG-inequality, see for example \cite[Section, 6]{Alon}.
 
\begin{lemma}\label{l:Harris}
If $A$ is an increasing event and $B$ is a decreasing event of bipartite graphs, then in $G(n,n,p)$
\[
\mathbb{P}(A | B) \leq \mathbb{P}(A).
\]
\end{lemma}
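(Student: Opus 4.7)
The plan is to derive this directly from the standard form of Harris' inequality, which states that any two increasing events on a product of independent Bernoulli variables are positively correlated. The model $G(n,n,p)$ has sample space $\{0,1\}^{N_1 \times N_2}$ equipped with product measure $p^{|E|}(1-p)^{n^2 - |E|}$, so this setup applies directly to pairs of monotone events defined on bipartite graphs on the bipartition $(N_1,N_2)$.

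First I would note that if $B$ is a decreasing event, then $B^c$ is an increasing event. Applying Harris to the two increasing events $A$ and $B^c$ yields
\[
\mathbb{P}(A \cap B^c) \geq \mathbb{P}(A)\,\mathbb{P}(B^c).
\]
Rewriting the left-hand side as $\mathbb{P}(A) - \mathbb{P}(A \cap B)$ and the right-hand side as $\mathbb{P}(A)\bigl(1-\mathbb{P}(B)\bigr)$ and rearranging gives $\mathbb{P}(A \cap B) \leq \mathbb{P}(A)\,\mathbb{P}(B)$. Assuming $\mathbb{P}(B) > 0$ (the statement is vacuous otherwise, since conditioning is undefined), dividing through by $\mathbb{P}(B)$ yields the claimed inequality
\[
\mathbb{P}(A \mid B) \leq \mathbb{P}(A).
\]

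There is essentially no obstacle here: the only thing to verify is that the notions of increasing and decreasing event used in the lemma coincide with the ones in the cited form of Harris' inequality, namely monotonicity with respect to the coordinatewise partial order on $\{0,1\}^{N_1 \times N_2}$. Since adding (resp.\ removing) an edge of a bipartite graph on $(N_1,N_2)$ is exactly flipping one coordinate from $0$ to $1$ (resp.\ from $1$ to $0$), the two definitions agree, and the lemma follows.
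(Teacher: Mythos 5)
Your proof is correct. The paper does not spell out a proof of this lemma---it simply cites Harris' inequality and notes it is a special case of FKG---and your derivation (apply Harris to the increasing events $A$ and $B^c$, then rearrange) is exactly the standard argument being invoked, together with the routine observation that $G(n,n,p)$ is a product of independent Bernoullis indexed by $N_1 \times N_2$.
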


Finally, we will also need the following lemma, which gives a useful criterion for when a sequence of random variables converges in distribution to a Poisson distribution.
 \begin{lemma}[\cite{Janson}]\label{l:Poisson}
 If $X_1,X_2,\cdots$ are random variables with finite 
moments such that $\mathbb{E}\left((X_n)_k\right)\to \lambda^k$ as $n\to \infty$ for every positive integer $k$, where $(X_n)_k$ is the $k$th factorial moment of $X_n$ and $\lambda \geq 0$ is a constant, then $X_n$ converges in distribution to $Po(\lambda)$.
\end{lemma}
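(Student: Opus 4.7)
The plan is to apply the classical method of moments, targeted at the Poisson distribution. First, I would observe that for $Y \sim Po(\lambda)$ a direct computation (summing $y(y-1)\cdots(y-k+1)\lambda^y e^{-\lambda}/y!$ over $y\ge k$ and reindexing) gives $\mathbb{E}((Y)_k) = \lambda^k$ for every $k \in \mathbb{N}$. Hence the hypothesis of the lemma rephrases as: every factorial moment of $X_n$ converges to the corresponding factorial moment of $Po(\lambda)$.

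Next, I would promote this to convergence of the ordinary integer moments. Using the Stirling expansion $X^k = \sum_{j=0}^{k} S(k,j)(X)_j$, where $S(k,j)$ is the Stirling number of the second kind, linearity of expectation and the fact that the sum is finite give
\[
\mathbb{E}(X_n^k) = \sum_{j=0}^{k} S(k,j)\, \mathbb{E}((X_n)_j) \;\longrightarrow\; \sum_{j=0}^{k} S(k,j)\, \lambda^j = \mathbb{E}(Y^k),
\]
for every fixed $k$. In particular $\mathbb{E}(X_n) \to \lambda$, so by Markov's inequality the sequence $(X_n)$ is tight and every subsequence has a further subsequence that converges in distribution to some limit $X$. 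A standard uniform integrability argument, using boundedness of one extra moment (e.g.\ $\mathbb{E}(X_n^{k+1})$ is bounded, which we have just established), shows that along any such subsequence $\mathbb{E}(X_n^k)$ converges to $\mathbb{E}(X^k)$, so the subsequential limit $X$ has the same integer moments as $Y$.

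Finally, I would invoke moment determinacy: since the moment generating function $\mathbb{E}(e^{tY}) = \exp(\lambda(e^t-1))$ is finite for every $t \in \mathbb{R}$, the Poisson law is uniquely determined by its moments, so every subsequential limit $X$ must equal $Y$ in distribution; hence the whole sequence $X_n$ converges in distribution to $Po(\lambda)$. The only mildly delicate point in this argument is the moment-determinacy/uniform-integrability step — it is what rules out ``mass escaping to infinity'' and forces the limiting law to be exactly $Po(\lambda)$ rather than some other distribution with matching moments; once this is noted, the proof is essentially immediate from the hypothesis.
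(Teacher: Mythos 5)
The paper does not supply a proof of this lemma; it is cited verbatim as a standard result from the reference \cite{Janson} (the textbook of Janson, {\L}uczak and Ruci\'{n}ski), so there is no internal proof to compare against. Your argument is the standard method-of-moments proof and it is essentially correct: converting factorial moments to ordinary moments via the Stirling-number identity $X^k=\sum_{j=0}^{k}S(k,j)(X)_j$, observing that the Poisson law has all moments finite and an everywhere-finite moment generating function and is therefore determined by its moments, and then using tightness plus uniform integrability to force every subsequential weak limit to have the Poisson moments and hence to be $Po(\lambda)$.

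One small technical point worth tightening: you invoke Markov's inequality on $\mathbb{E}(X_n)\to\lambda$ to get tightness, but Markov requires $X_n\geq 0$, which is not explicitly in the hypotheses (though it holds in every application in this paper, where $X_n$ counts components). It is cleaner to deduce tightness from the bounded second moment $\mathbb{E}(X_n^2)=\mathbb{E}((X_n)_2)+\mathbb{E}(X_n)\to\lambda^2+\lambda$ via Chebyshev, which works regardless of sign. Alternatively, one can bypass the tightness/subsequence machinery entirely by a direct argument: the convergence of factorial moments plus a Bonferroni-type inclusion--exclusion estimate shows $\mathbb{P}(X_n=m)\to e^{-\lambda}\lambda^m/m!$ for every fixed $m$, which is arguably the more ``combinatorial'' route usually presented in random graph texts. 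Both approaches are valid; yours simply leans on the general moment-determinacy theorem rather than on a Poisson-specific expansion.
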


\section{Component structure of $G(n,n,p)$}\label{s:compt} 
One of the main ways in which we derive information about the distribution of the components in $G(n,n,p)$ is by calculating various moments of the number of components with particular properties, and in particular the expected value.

Given $i,j \in \mathbb{N}$ and $\ell \in \mathbb{Z}$, let $X(i,j,\ell)$ denote the number of components in $G(n,n,p)$ with $i$ vertices in $N_1$, $j$ vertices in $N_2$, and $i+j+\ell$ edges. Letting $i+j=k$, we have
\begin{equation}\label{e:componentcount}
\mathbb{E}\left(X(i,j,\ell)\right) = \binom{n}{i}\binom{n}{j} C(i,j,\ell) p^{k + \ell} (1-p)^{kn -ij -k -\ell},
\end{equation}
where $C(i,j,\ell)$ is the number of connected bipartite graphs with $i$ vertices in one partition class, $j$ in the second, and $i+j + \ell$ many edges. Hence, in order to understand the quantities $\mathbb{E}\left(X(i,j,\ell)\right)$, it is important to know how the quantities $C(i,j,\ell)$ behave. 

In this section we state some bounds for $C(i,j,\ell)$, which we will prove later in Section \ref{s:counting}, and derive some consequences of these bounds, using \eqref{e:componentcount}, for the expected number of tree, unicyclic and complex components in $G(n,n,p)$.

The following estimates are useful to this end. Using the fact that $1+x = e^{x + O\left(x^2\right)}$ for any $x=o(1)$, we see that for any $i+j=k \leq n$, $c > 0$, and $\epsilon = o(1)$,
\begin{equation}\label{e:common}
   \left(1-\frac{1+\epsilon}{n}\right)^{kn-cij + O(k)}=\exp\left(-(1+\epsilon) k+\frac{(1+\epsilon)cij}{n} + O\left(\frac{k}{n}\right) \right).
\end{equation}

Throughout this section, unless stated otherwise, we let $\epsilon = \epsilon(n)$ be such that $|\epsilon|^3 n  \rightarrow \infty$ and $\epsilon = o(1)$, and let $p=\frac{1 + \epsilon}{n}$. We will also refer to $\delta$ as defined in~\eqref{e:definedelta}, i.e.,
\begin{equation*}
\delta = \epsilon - \log(1+\epsilon) \approx \frac{\epsilon^2}{2}.
\end{equation*}

\subsection{Tree components}
Let us write $\hat{C}(i,\ell)$ for the number of (not-necessarily bipartite) connected graphs with $i$ vertices and $i+\ell$ many edges. It is a classic result of Cayley that the number of trees on $i$ vertices, in other words $\hat{C}(i,-1)$, is $i^{i-2}$. The following result of Scoins \cite{Scoins} gives an analogue for bipartite trees.

\begin{theorem}[\cite{Scoins}]\label{t:scoins}
For any $i,j \in \mathbb{N}$ we have $C(i,j,-1) = i^{j-1}j^{i-1}$.
\end{theorem}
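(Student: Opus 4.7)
The plan is to identify $C(i,j,-1)$ with the number of spanning trees of the complete bipartite graph $K_{i,j}$ and to compute the latter by Kirchhoff's Matrix-Tree theorem. Indeed, any connected bipartite graph with partition classes of sizes $i$ and $j$ and exactly $i+j-1$ edges is necessarily a tree and hence a spanning subgraph of $K_{i,j}$; so $C(i,j,-1)$ is exactly $\tau(K_{i,j})$, the number of spanning trees of $K_{i,j}$.

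The first step is to invoke Kirchhoff's theorem in the form $\tau(G)=\frac{1}{N}\prod_{k=2}^{N}\lambda_k$, where $0=\lambda_1\le\lambda_2\le\cdots\le\lambda_N$ are the eigenvalues of the Laplacian $L(G)=D-A$ of a graph $G$ on $N$ vertices. Applied to $K_{i,j}$, this reduces the problem to diagonalising the block matrix $L(K_{i,j})$, whose diagonal is constantly $j$ on the rows indexed by $N_1$ and constantly $i$ on the rows indexed by $N_2$, and whose off-diagonal blocks are all-ones blocks of the appropriate sizes.

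Next I would write down the spectrum of $L(K_{i,j})$ by inspection. The all-ones vector spans the kernel. Any vector supported on $N_1$ whose coordinates sum to zero is annihilated by $A$ and hence is an eigenvector of $L$ with eigenvalue $j$, giving an eigenspace of dimension $i-1$; symmetrically, one obtains a $(j-1)$-dimensional eigenspace with eigenvalue $i$ supported on $N_2$. The remaining eigenvalue, $i+j$, is realised by the vector that equals $j$ on $N_1$ and $-i$ on $N_2$, as a short direct calculation confirms. A dimension count $1+(i-1)+(j-1)+1=i+j$ shows that these account for all eigenvalues of $L(K_{i,j})$.

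Combining the two steps then gives
\[
C(i,j,-1)\;=\;\tau(K_{i,j})\;=\;\frac{1}{i+j}\cdot j^{i-1}\cdot i^{j-1}\cdot(i+j)\;=\;i^{j-1}j^{i-1},
\]
as required. Every ingredient here is standard, so I do not anticipate any real obstacle; the only thing requiring genuine care is the eigenvector bookkeeping, but it follows immediately from the block structure of $L(K_{i,j})$. A purely combinatorial alternative would be a two-sided Prüfer-style encoding, building a bijection between spanning trees of $K_{i,j}$ and pairs in $[i]^{j-1}\times[j]^{i-1}$ via iterated smallest-leaf removal and recording the opposite-side neighbour at each step; this is notationally heavier (one must argue that vertices of $N_1$ appear exactly $j-1$ times and those of $N_2$ exactly $i-1$ times in the code) and seems to offer no real advantage over the spectral proof above.
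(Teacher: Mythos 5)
Your argument is correct and complete. The reduction of $C(i,j,-1)$ to $\tau(K_{i,j})$ is immediate from the definitions (a connected graph on $i+j$ vertices with $i+j-1$ edges is a tree, and the bipartition is fixed), your Laplacian spectrum $\{0,\, j^{(i-1)},\, i^{(j-1)},\, i+j\}$ is right, and the Matrix-Tree computation $\frac{1}{i+j}\cdot i^{j-1}j^{i-1}(i+j)$ gives the claimed count. Note, however, that the paper does not prove this statement at all: it simply cites Scoins's 1962 paper (whose original argument is a generating-function/Pr\"ufer-style count, closer to the bijective alternative you sketch at the end). So there is no in-paper proof to compare against; your spectral proof is a clean, self-contained substitute for the citation.
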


As a consequence, we can derive an asymptotic formula for the expected number of tree components in $G(n,n,p)$.

\begin{theorem} \label{t:treeexpectation}
For any $i=i(n), j = j(n) \in \mathbb{N}$ satisfying $k:=i+j\leq n$, we have 
\begin{equation}\label{e:treeexpectation}
\mathbb{E}\left(X(i,j,-1)\right) \approx \frac{n}{2\pi(ij)^{\frac{3}{2}}} e^{-\delta k} \left(\frac{i}{j}\right)^{j-i} \exp\left(-\frac{(i-j)^2}{2n}-\frac{i^3+j^3}{6n^2} +\frac{\epsilon ij}{n} + O\left(\frac{k}{n}\right) + O\left(\frac{i^4+j^4}{n^3}\right) \right).
\end{equation}
\end{theorem}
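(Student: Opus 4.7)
The plan is to substitute the closed formula $C(i,j,-1) = i^{j-1} j^{i-1}$ from Theorem \ref{t:scoins} directly into \eqref{e:componentcount} and expand each factor to the required precision. Starting from
$$\mathbb{E}(X(i,j,-1)) = \binom{n}{i}\binom{n}{j}\, i^{j-1} j^{i-1}\, p^{k-1}(1-p)^{kn-ij-k+1},$$
I would first expand the two binomials using Stirling's formula $i! \approx \sqrt{2\pi i}\,(i/e)^i$ together with \eqref{e:fallingfactorial} applied to $(n)_i$ and $(n)_j$, obtaining
$$\binom{n}{i}\binom{n}{j} \approx \frac{1}{2\pi\sqrt{ij}}\left(\frac{ne}{i}\right)^{i}\left(\frac{ne}{j}\right)^{j}\exp\!\left(-\frac{i^2+j^2}{2n}-\frac{i^3+j^3}{6n^2}+O\!\left(\tfrac{k}{n}\right)+O\!\left(\tfrac{i^4+j^4}{n^3}\right)\right),$$
where the lower-order terms in \eqref{e:fallingfactorial} of the form $i^2/n^2$ are absorbed into $O(k/n)$ using $i,j \le k \le n$.

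Next I would use the telescoping identity
$$\left(\frac{ne}{i}\right)^{i}\left(\frac{ne}{j}\right)^{j} i^{j-1} j^{i-1} = \frac{(ne)^{k}}{ij}\left(\frac{i}{j}\right)^{j-i},$$
and combine with $p^{k-1} = (1+\epsilon)^{k-1}/n^{k-1}$, producing the target prefactor $\frac{n}{2\pi(ij)^{3/2}}\cdot e^{k}\cdot(1+\epsilon)^{k-1}\cdot(i/j)^{j-i}$. I would then expand $(1-p)^{kn-ij-k+1}$ via \eqref{e:common} with $c=1$, which contributes $\exp(-(1+\epsilon)k + (1+\epsilon)ij/n + O(k/n))$. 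The key algebraic simplification is then twofold. First, the three exponential $k$-factors collapse to
$$e^{k}(1+\epsilon)^{k-1}e^{-(1+\epsilon)k} = (1+\epsilon)^{-1}\exp\!\bigl(-k(\epsilon-\log(1+\epsilon))\bigr) = (1+o(1))\,e^{-\delta k},$$
using $\epsilon = o(1)$ in the prefactor. Second, combining $-\frac{i^2+j^2}{2n}$ with the $+\frac{ij}{n}$ contribution from $(1-p)$ completes the square to $-\frac{(i-j)^2}{2n}$, while the residual $\frac{\epsilon ij}{n}$ is retained as in the statement.

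There is no serious obstacle here; the argument is an asymptotic bookkeeping exercise, and the only things to watch are that every intermediate remainder is absorbed into $O(k/n)+O((i^4+j^4)/n^3)$, and that the completion of the square cleanly identifies the $(i-j)^2/(2n)$ term. The slight care needed is in the identification of the $\delta$-factor: it is precisely the assumption $\epsilon = o(1)$ (so $(1+\epsilon)^{-1}=1+o(1)$) that lets us absorb the remaining constant into the $\approx$ symbol, which is exactly the regime in which the theorem is stated.
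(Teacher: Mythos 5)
Your proposal is correct and follows essentially the same route as the paper: substitute Scoins' formula into \eqref{e:componentcount}, expand the binomials via Stirling and \eqref{e:fallingfactorial}, apply \eqref{e:common} to the $(1-p)$-factor, and collect the $e^{-\delta k}$ term and the $(i-j)^2/(2n)$ term by completing the square. The only cosmetic difference is that you spell out the telescoping cancellation and the collapse of the $k$-exponential factors as intermediate displays, whereas the paper performs these silently between \eqref{e:treescount}, \eqref{e:treesinitial}, and the final formula; the bookkeeping of the lower-order remainders into $O(k/n)+O((i^4+j^4)/n^3)$ is handled the same way.
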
 

\begin{proof}
By Theorem \ref{t:scoins} and \eqref{e:componentcount}, together with Stirling's formula, we have
\begin{align}
    \mathbb{E}\left(X(i,j,-1)\right) &= \binom{n}{i}\binom{n}{j} C(i,j,-1) p^{k -1} (1-p)^{kn -ij -k +1} \nonumber\\
     &=\frac{(n)_i}{i!}\frac{(n)_j}{j!}i^{j-1}j^{i-1}p^{k-1}(1-p)^{kn-ij-k+1} \label{e:treescount}\\
     &\approx\frac{e^k}{2\pi(ij)^{\frac{3}{2}}}\left(\frac{i}{j}\right)^{j-i}\frac{(n)_i(n)_j}{n^{k-1}} (1+\epsilon)^{k}\left(1-\frac{1+\epsilon}{n}\right)^{kn-ij-k+1}. \label{e:treesinitial}
 \end{align}
Hence, by \eqref{e:treesinitial}, \eqref{e:fallingfactorial} and \eqref{e:common}, we obtain
\begin{equation*}
\mathbb{E}\left(X(i,j,-1)\right) \approx \frac{n}{2\pi(ij)^{\frac{3}{2}}} e^{-\delta k} \left(\frac{i}{j}\right)^{j-i} \exp\left(-\frac{(i-j)^2}{2n}-\frac{i^3+j^3}{6n^2} +\frac{\epsilon ij}{n} + O\left(\frac{k}{n}\right) + O\left(\frac{i^4+j^4}{n^3}\right) \right).
\end{equation*}
\end{proof}

\subsection{Unicyclic components}
We will derive in Section \ref{s:counting} the following expression for the number of unicyclic connected bipartite graphs.
\begin{theorem}\label{t:unicyliccomponents}
For any $i,j \in \mathbb{N}$ we have 
\[
C(i,j,0) = \frac{1}{2}  i^{j-1} j^{i-1} \sum_{r=2}^{\min\{i,j\}} \frac{(i)_r(j)_r}{i^r j^r}\left(i + j-r\right),
\]
and so in particular, for any $i=i(n),j=j(n) \in \mathbb{N}$  satisfying $i,j \rightarrow \infty$ and $\frac{1}{2}\leq \frac{i}{j}\leq 2$ we have
\[
C(i,j,0)\approx \sqrt{\frac{\pi}{8}} \sqrt{i+j} i^{j-\frac{1}{2}} j^{i-\frac{1}{2}}.
\]
\end{theorem}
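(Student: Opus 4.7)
The proof of Theorem~\ref{t:unicyliccomponents} will split into two parts: first the exact combinatorial identity, and then the asymptotic estimate. For the exact formula, the plan is to decompose any connected bipartite graph of excess~$0$ by its unique cycle. Since the graph is bipartite, this cycle has even length $2r$ for some $r$ with $2 \leq r \leq \min\{i,j\}$ and contains exactly $r$ vertices from each partition class. The number of ways to choose an ordered cycle with $r$ vertices from each side of $(N_1,N_2)$ is $\frac{(i)_r(j)_r}{2r}$: pick and order the $r$ cycle vertices from each class, then quotient by the $2r$ cyclic rotations and reflections. Removing the cycle edges leaves a spanning bipartite rooted forest whose roots are the $2r$ cycle vertices and whose remaining vertices consist of $i-r$ from $N_1$ and $j-r$ from $N_2$.

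The key enumerative input is a bipartite analogue of the classical rooted forest formula: the number of spanning forests of $K_{i,j}$ with a prescribed set of $r$ roots in each partition class equals $r(i+j-r)\, i^{j-r-1} j^{i-r-1}$. This extends Scoins' theorem (Theorem~\ref{t:scoins}) and can be obtained, for instance, by applying the all-minors matrix-tree theorem to the Laplacian of $K_{i,j}$ (whose nonzero spectrum consists of $j$ with multiplicity $i-1$, $i$ with multiplicity $j-1$, and $i+j$ once), or equivalently by a Pr\"ufer-style bijection adapted to the bipartite setting. Combining the cycle count with the forest count and summing over $r$ yields
\[
C(i,j,0) = \sum_{r=2}^{\min\{i,j\}} \frac{(i)_r(j)_r}{2r}\cdot r(i+j-r)\, i^{j-r-1} j^{i-r-1} = \frac{1}{2} i^{j-1} j^{i-1} \sum_{r=2}^{\min\{i,j\}} \frac{(i)_r(j)_r}{i^r j^r}(i+j-r),
\]
which is the claimed identity.

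For the asymptotic, under the assumption $1/2 \leq i/j \leq 2$ and $i,j \to \infty$, the plan is to perform a Gaussian estimate on the sum. Using \eqref{e:fallingfactorialbound} together with the expansion $\log(1-s/i) = -s/i - s^2/(2i^2) + O(s^3/i^3)$ term-by-term, one finds
\[
\frac{(i)_r(j)_r}{i^r j^r} = \exp\!\left(-\frac{r^2}{2}\left(\frac{1}{i}+\frac{1}{j}\right) + O\!\left(\frac{r^3}{i^2}+\frac{r^3}{j^2}\right)\right),
\]
so the summand concentrates on $r = O\bigl(\sqrt{ij/(i+j)}\bigr)$, where $i+j-r = (1+o(1))(i+j)$. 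Applying Spencer's lemma (Lemma~\ref{l:spencer}) to pass from the sum to the integral $\int_0^\infty e^{-r^2(i+j)/(2ij)}\,dr = \sqrt{\pi ij/(2(i+j))}$ yields
\[
\sum_{r=2}^{\min\{i,j\}} \frac{(i)_r(j)_r}{i^r j^r}(i+j-r) \approx (i+j)\sqrt{\frac{\pi ij}{2(i+j)}} = \sqrt{\frac{\pi(i+j)ij}{2}},
\]
and substituting into the exact formula gives $C(i,j,0) \approx \sqrt{\pi/8}\,\sqrt{i+j}\, i^{j-\frac{1}{2}} j^{i-\frac{1}{2}}$, as required.

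The main obstacle I expect is the precise justification of the bipartite rooted forest count: the combinatorial coefficient $r(i+j-r)$ needs to be pinned down exactly rather than up to a lower-order factor, and the bipartite constraint makes the standard Pr\"ufer-style bijection less immediate than in the non-bipartite case. The remaining steps, namely the summation identity and the Gaussian estimate, are comparatively routine once the exact combinatorial identity is available; one should only be careful at the tail near $r = \min\{i,j\}$, where the Gaussian approximation degenerates but the summand itself is negligibly small and absorbed into the error term.
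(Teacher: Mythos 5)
Your proof takes essentially the same approach as the paper's: the same cycle--forest decomposition with cycle count $\frac{(i)_r(j)_r}{2r}$ and rooted bipartite forest count $r(i+j-r)\,i^{j-r-1}j^{i-r-1}$, followed by the same Gaussian estimate concentrating the sum near $r = O\bigl(\sqrt{ij/(i+j)}\bigr)$. The only departure is that you sketch an independent derivation of the forest count (via the all-minors matrix-tree theorem or a Pr\"ufer bijection), whereas the paper simply invokes Moon's formula (Lemma~\ref{l:bipartiteforest}) with $s=t=r$.
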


We note for comparison that it is known that 
 \[
 \hat{C}(i,0) \approx \sqrt{\frac{\pi}{8}}i^{i-\frac{1}{2}},
 \]
 see \cite[Corollary 5.19]{BollobasBook}. We can derive as a consequence an asymptotic formula for the expected number of unicyclic components in $G(n,n,p)$ which are appropriately balanced across the partition classes.

\begin{theorem}\label{t:unicyclicexpectation} 
For any $i=i(n), j = j(n) \in \mathbb{N}$ satisfying $i,j \rightarrow \infty$, $k:=i+j\leq n$, and $\frac{1}{2} \leq \frac{i}{j} \leq 2$, we have
\begin{equation}\label{e:unicyclicexpectation}
\mathbb{E}\left(X(i,j,0)\right) \approx \frac{\sqrt{k}}{4\sqrt{2 \pi}ij} e^{-\delta k} \left(\frac{i}{j}\right)^{j-i} \exp\left(-\frac{(i-j)^2}{2n}-\frac{i^3+j^3}{6n^2} +\frac{\epsilon ij}{n} + O\left(\frac{k}{n}\right) + O\left(\frac{i^4+j^4}{n^3}\right) \right).
\end{equation}
\end{theorem}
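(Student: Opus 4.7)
The plan is to follow the same path as the proof of Theorem \ref{t:treeexpectation}, using the unicyclic analogue of Cayley's formula in place of Scoins's theorem. Starting from the identity \eqref{e:componentcount} with $\ell = 0$, we have
\[
\mathbb{E}(X(i,j,0)) = \binom{n}{i}\binom{n}{j} C(i,j,0) \, p^k (1-p)^{kn - ij - k},
\]
and the hypotheses $i,j \to \infty$ and $\tfrac{1}{2} \leq \tfrac{i}{j} \leq 2$ put us precisely in the regime where the second (asymptotic) part of Theorem \ref{t:unicyliccomponents} applies, so we may substitute $C(i,j,0) \approx \sqrt{\pi/8}\sqrt{k}\, i^{j-1/2} j^{i-1/2}$.

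First I would apply Stirling's formula to $i!$ and $j!$ and regroup powers of $i$ and $j$. Writing $\binom{n}{i}\binom{n}{j} = \frac{(n)_i(n)_j}{i!j!}$ and combining $i^{j-1/2} j^{i-1/2}$ with the $i^{-i} j^{-j}$ coming from Stirling produces, as in the tree case, the factor $\tfrac{1}{\sqrt{ij}}\left(\tfrac{i}{j}\right)^{j-i}$ (because $i^{j-i-1/2} j^{i-j-1/2} = \tfrac{1}{\sqrt{ij}}(i/j)^{j-i}$). Collecting this with the $\sqrt{k}$ factor, the $e^k$ from Stirling, and the numerical constant $\sqrt{\pi/8}/(2\pi) = 1/(4\sqrt{2\pi})$ gives the target prefactor $\frac{\sqrt{k}}{4\sqrt{2\pi}\, ij}\left(\frac{i}{j}\right)^{j-i}$, multiplied by $e^k (n)_i (n)_j p^k (1-p)^{kn-ij-k}$.

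The remaining product is handled by the same two estimates used in the tree case. I would first invoke \eqref{e:fallingfactorial} and the identity $i^2+j^2 = (i-j)^2 + 2ij$ to obtain
\[
\frac{(n)_i(n)_j}{n^k} = \exp\!\left(-\frac{(i-j)^2}{2n} - \frac{ij}{n} - \frac{i^3+j^3}{6n^2} + O\!\left(\frac{k}{n}\right) + O\!\left(\frac{i^4+j^4}{n^3}\right)\right),
\]
and then apply \eqref{e:common} with $c=1$ to write $(1-p)^{kn-ij-k} = \exp(-(1+\epsilon)k + \tfrac{(1+\epsilon)ij}{n} + O(k/n))$. Combining these with $e^k(1+\epsilon)^k = \exp((1+\epsilon)k - \delta k)$, the full $k$ terms $(1+\epsilon)k$ cancel and only $-\delta k$ survives, while the $(1+\epsilon)ij/n - ij/n$ collapses to $\epsilon ij/n$. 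Assembling everything produces precisely \eqref{e:unicyclicexpectation}.

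The argument is essentially a parallel calculation, with no serious obstacle; the only subtlety worth noting is that the balancedness hypothesis $\tfrac{1}{2}\leq i/j \leq 2$ is used \emph{solely} to license the asymptotic form of $C(i,j,0)$ from Theorem \ref{t:unicyliccomponents}, whereas Stirling's formula and the estimates \eqref{e:fallingfactorial}, \eqref{e:common} are valid throughout the stated range $k \leq n$ and $i,j \to \infty$ without any balancedness assumption.
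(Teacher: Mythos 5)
Your proposal is correct and follows essentially the same path as the paper's proof: substitute the asymptotic estimate for $C(i,j,0)$ from Theorem~\ref{t:unicyliccomponents} into~\eqref{e:componentcount}, apply Stirling to obtain the $\frac{e^k\sqrt{k}}{4\sqrt{2\pi}\,ij}\left(\frac{i}{j}\right)^{j-i}$ prefactor, and then combine~\eqref{e:fallingfactorial} and~\eqref{e:common} so that the $(1+\epsilon)k$ terms cancel and $-\delta k + \epsilon ij/n$ remains. Your remark that the balancedness hypothesis is used only to invoke the asymptotic form of $C(i,j,0)$ is also accurate.
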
 
\begin{proof}
By Theorem \ref{t:unicyliccomponents} and \eqref{e:componentcount}, together with Stirling's formula,  if $\frac{1}{2} \leq \frac{i}{j} \leq 2$, then
\begin{align}
    \mathbb{E}\left(X(i,j,0)\right) &= \binom{n}{i}\binom{n}{j} C(i,j,0) p^{k} (1-p)^{kn -ij -k} \approx \frac{(n)_i}{i!}\frac{(n)_j}{j!}\sqrt{\frac{\pi}{8}}\sqrt{k}i^{j-\frac{1}{2}}j^{i-\frac{1}{2}}p^{k}(1-p)^{kn-ij-k} \nonumber\\
     &\approx\frac{e^k\sqrt{ k}}{4\sqrt{2 \pi }ij} \left(\frac{i}{j}\right)^{j-i}\frac{(n)_i(n)_j}{n^k} (1+\epsilon)^{k}\left(1-\frac{1+\epsilon}{n}\right)^{kn-ij-k}. \label{e:unicyclicinitial}
 \end{align}
Hence, by \eqref{e:unicyclicinitial}, \eqref{e:fallingfactorial} and \eqref{e:common}, we get
\begin{equation*}
\mathbb{E}\left(X(i,j,0)\right) \approx \frac{\sqrt{k}}{4\sqrt{2 \pi}ij} e^{-\delta k} \left(\frac{i}{j}\right)^{j-i} \exp\left(-\frac{(i-j)^2}{2n}-\frac{i^3+j^3}{6n^2} +\frac{\epsilon ij}{n} +  O\left(\frac{k}{n}\right) + O\left(\frac{i^4+j^4}{n^3}\right)\right).
\end{equation*}
\end{proof}

\subsection{Complex components} 
In Section \ref{s:counting} we will also prove the following upper bound on the number of connected bipartite graphs with a fixed excess which are appropriately balanced across the partition classes.

\begin{theorem}\label{t:complexcomponents}
There is a constant $c>0$ such that for any $i,j,\ell \in \mathbb{N}$ with $\ell \leq ij - i -j$ and $\frac{1}{2}\leq \frac{i}{j}\leq 2$,
\[
C(i,j,\ell)\leq i^{j}j^{i}(i+j)^{\frac{3\ell-1}{2}}\left(\frac{c}{\ell}\right)^\frac{\ell}{2}.
\]
\end{theorem}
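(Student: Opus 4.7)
The approach I would take is the \emph{core-and-kernel method} of Bollob\'as and {\L}uczak, adapted to the bipartite setting. Given a connected bipartite graph $G$ with part sizes $i, j$ and excess $\ell \geq 1$, iteratively deleting vertices of degree $1$ produces the $2$-core $H$, which is bipartite, connected, and still has excess $\ell$. Suppressing the degree-$2$ vertices of $H$ yields the \emph{kernel} $K$: a bipartite multigraph of minimum degree at least $3$ and excess $\ell$. Write $i_K, j_K$ for the sizes of its two parts, $m = i_K + j_K$, and $q = m + \ell$ for its edge count. The minimum-degree condition $2q \geq 3m$ then gives $m \leq 2\ell$ and $q \leq 3\ell$. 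The edges of $K$ correspond to internally-disjoint paths of $H$, and $G$ is recovered from $H$ by attaching a rooted bipartite forest at the vertices of $V(H)$.

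I would then bound $C(i,j,\ell)$ by summing, over all admissible $(i_K, j_K, i_H, j_H)$ (where $i_H, j_H$ are the part sizes of $H$), a product of four factors. These are: (a) the number of kernels $K$ with the prescribed part sizes and $q$ edges; (b) the number of choices of positive path lengths $d_1, \dots, d_q$ summing to $|E(H)| = i_H + j_H + \ell$, subject to bipartite parity (odd length when the kernel edge joins opposite parts, even otherwise); (c) the number of assignments of the $i_H - i_K$ and $j_H - j_K$ internal path vertices to the prescribed positions; and (d) the number of spanning rooted bipartite forests on $[i] \cup [j]$ whose roots lie in $V(H)$, for which I would use a bipartite analogue of the generalised Cayley formula together with Scoins' tree enumeration (Theorem~\ref{t:scoins}). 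The balance hypothesis $\tfrac12 \leq i/j \leq 2$ enters decisively in this last step, allowing the forest count to be bounded symmetrically in $i$ and $j$, and producing the leading $i^j j^i$ factor.

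Combining and simplifying the four bounds, the factor $(i+j)^{(3\ell - 1)/2}$ in the target arises from the $q \leq 3\ell$ edge-position choices in the kernel, offset by a $(i+j)^{-m/2}$ correction produced via Stirling in the internal-vertex multinomial; the savings $(c/\ell)^{\ell/2}$ emerges from applying Stirling to factorials of $\ell$-sized parameters (e.g.\ $q!$ in the kernel count and a configuration-model-style correction of order $(2q)!/(2^q q!)$). The main obstacle I expect is step (d), the bipartite rooted-forest enumeration, since classical Cayley-type identities do not transfer verbatim to the bipartite case: one must stratify by the number of tree-vertices of each part attached at each vertex of $V(H)$, and then control the resulting sum. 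A secondary difficulty is that the bipartite parity constraints in step (b) eliminate roughly half of the compositions per path, which must be shown to cost only a multiplicative constant rather than an $e^{\Omega(\ell)}$ loss that would spoil the final bound. Once these are handled, the outer sum over $i_K, j_K, i_H, j_H$ is a convergent geometric-type series, yielding the claimed inequality.
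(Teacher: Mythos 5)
Your outline follows essentially the same core-and-kernel decomposition the paper uses, and the broad structure — kernel, subdivisions of kernel edges, attached rooted bipartite forest, Stirling/sum estimates — is the right one. However, there is a concrete error at the kernel step: suppressing the degree-$2$ vertices of the bipartite $2$-core does \emph{not} produce a bipartite multigraph. When a degree-$2$ vertex $v\in J$ with both neighbours $u,w\in I$ is suppressed, the replacement edge $uw$ joins two vertices of $I$; iterating, the kernel can acquire loops, parallel edges between same-side vertices, and odd cycles (for instance, two even cycles of $H$ sharing a single vertex collapse to a one-vertex kernel with two loops). If you enumerate kernels as bipartite multigraphs with part sizes $i_K, j_K$, you undercount the kernels that actually occur, and the resulting inequality would not be a valid upper bound on $C(i,j,\ell)$. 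The paper's construction instead counts all connected (not necessarily bipartite) multigraphs on the chosen kernel vertex set via a stars-and-bars bound, and invokes bipartiteness only at the subdivision step, where it is simply dropped as a constraint so that the overcount still bounds $C(i,j,\ell)$ from above.

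Two smaller remarks. The rooted bipartite forest count you identify as the main obstacle in step (d) is in fact available in closed form via Moon's formula (Lemma~\ref{l:bipartiteforest}), so no delicate stratification is required. The parity constraint you flag in step (b) is also a non-issue for an \emph{upper} bound: imposing it can only reduce the count, so one simply discards it rather than quantifying the loss. Finally, your sketch implicitly assumes $\ell$ is comparable to $i+j$; to cover the full range $\ell\le ij-i-j$, the paper treats $\ell>i+j$ separately with the elementary bound $C(i,j,\ell)\le\binom{ij}{i+j+\ell}$.
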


We note for comparison that it is known that there is an absolute constant $c$ such that
\begin{align}
\hat{C}(i,\ell) \leq c \ell^{-\frac{\ell}{2}}i^{i+\frac{3\ell-1}{2}},
\label{e:complexgeneral}
\end{align}
see \cite[Corollary 5.21]{BollobasBook}.

As before, using these bounds we can give an upper bound on the expected number of components with a fixed excess which are appropriately balanced across the partition classes.

\begin{theorem} \label{t:complexexpectation}
There is a constant $c>0$ such that for any $i=i(n),j=j(n),\ell=\ell(n) \in \mathbb{N}$ satisfying $\ell \leq ij-i-j$, $k:=i+j \leq n$, and $\frac{1}{2} \leq \frac{i}{j} \leq 2$, we have  
\begin{align}
\mathbb{E}\left(X(i,j,\ell)\right) 
&\leq \frac{1}{\sqrt{ijk}}  \left(\frac{i}{j}\right)^{j-i}  \left(\frac{ck^3}{\ell n^2}\right)^\frac{\ell}{2} \exp \left(-\delta k + \frac{\epsilon k^2}{4n}  -\frac{(i-j)^2}{2n} + O\left(\frac{k}{n}\right) + \ell \log (1+\epsilon) + \frac{\ell(1+\epsilon)}{n} \right). \label{e:complexexpectation}
\end{align}
\end{theorem}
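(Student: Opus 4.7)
My plan is to substitute the counting bound from Theorem~\ref{t:complexcomponents} into the basic identity \eqref{e:componentcount} and carry out the asymptotic simplification using Stirling's formula together with \eqref{e:fallingfactorialbound} and \eqref{e:common}. Starting from $\mathbb{E}(X(i,j,\ell)) = \binom{n}{i}\binom{n}{j} C(i,j,\ell) p^{k+\ell}(1-p)^{kn-ij-k-\ell}$ with the bound $C(i,j,\ell) \leq i^j j^i k^{(3\ell-1)/2} (c/\ell)^{\ell/2}$ from Theorem~\ref{t:complexcomponents}, I apply Stirling to write $i!j! \approx 2\pi\sqrt{ij}\,i^i j^j e^{-k}$ and use the identity $i^j j^i/(i^i j^j) = (i/j)^{j-i}$. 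The $\ell$-dependent polynomial factors then regroup as $k^{(3\ell-1)/2}(c/\ell)^{\ell/2}/n^{\ell} = k^{-1/2}(ck^3/(\ell n^2))^{\ell/2}$, which combined with $(2\pi\sqrt{ij})^{-1}$ yields the stated prefactor $\frac{1}{\sqrt{ijk}}(i/j)^{j-i}(ck^3/(\ell n^2))^{\ell/2}$ after absorbing the $2\pi$ into the constant $c$.

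Next I collect the exponential contributions. By \eqref{e:fallingfactorialbound}, $(n)_i(n)_j/n^k \leq \exp\bigl(-\frac{(i-1)^2+(j-1)^2}{2n}\bigr)$, contributing $-\frac{i^2+j^2}{2n} + O(k/n)$ to the log. Splitting $(1-p)^{kn-ij-k-\ell} = (1-p)^{kn-ij+O(k)} \cdot (1-p)^{-\ell}$, \eqref{e:common} handles the first factor as $\exp\bigl(-(1+\epsilon)k + \frac{(1+\epsilon)ij}{n} + O(k/n)\bigr)$, while $(1-p)^{-\ell} = \exp\bigl(\frac{\ell(1+\epsilon)}{n} + O(\ell/n^2)\bigr)$. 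Together with $p^{k+\ell} = (1+\epsilon)^{k+\ell}/n^{k+\ell}$ contributing $k\log(1+\epsilon) + \ell\log(1+\epsilon)$, and the $e^k$ from Stirling, the $k$-linear terms collapse to $-\delta k$ via $\delta = \epsilon - \log(1+\epsilon)$, while the $\ell$-linear terms yield the desired $\ell \log(1+\epsilon) + \frac{\ell(1+\epsilon)}{n}$.

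The essential algebraic step is then combining the quadratic contributions $-\frac{i^2+j^2}{2n} + \frac{(1+\epsilon)ij}{n}$ by means of the identity $-\frac{i^2+j^2}{2} + ij = -\frac{(i-j)^2}{2}$, which leaves $-\frac{(i-j)^2}{2n} + \frac{\epsilon ij}{n}$. Finally, AM-GM gives $ij \leq k^2/4$, so $\frac{\epsilon ij}{n} \leq \frac{\epsilon k^2}{4n}$, and the balance condition $\frac{1}{2} \leq i/j \leq 2$ (which forces $ij \in [2k^2/9, k^2/4]$) guarantees that this substitution costs at most an absorbable amount relative to the other quadratic terms. Collecting everything then yields the claimed exponent $-\delta k + \frac{\epsilon k^2}{4n} - \frac{(i-j)^2}{2n} + O(k/n) + \ell\log(1+\epsilon) + \frac{\ell(1+\epsilon)}{n}$. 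Since the heavy combinatorial lifting is entirely in Theorem~\ref{t:complexcomponents}, I do not expect any genuine obstacle here: the only pitfall is careful bookkeeping of the $\ell$-dependent factors (both the $(1-p)^{-\ell}$ correction and the $(1+\epsilon)^\ell$ factor), which must be tracked separately from the $(i,j)$-dependent terms so that the final exponent has the clean separated form stated in \eqref{e:complexexpectation}.
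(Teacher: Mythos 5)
Your proposal is correct and reproduces the paper's proof essentially step for step: substitute the bound from Theorem~\ref{t:complexcomponents} into \eqref{e:componentcount}, apply Stirling together with \eqref{e:fallingfactorialbound} and \eqref{e:common} (splitting off the $(1-p)^{-\ell}$ factor, as you carefully do), combine the quadratic terms via $-\tfrac{i^2+j^2}{2}+ij=-\tfrac{(i-j)^2}{2}$, and finish with $ij\le k^2/4$. The only cosmetic slip is the closing appeal to the balance hypothesis $\tfrac12\le i/j\le 2$ to justify the substitution $\tfrac{\epsilon ij}{n}\le\tfrac{\epsilon k^2}{4n}$ — that step is unconditional AM--GM (given $\epsilon\ge 0$, which is implicit throughout), and the balance hypothesis is needed only so that Theorem~\ref{t:complexcomponents} applies to bound $C(i,j,\ell)$; likewise the factor $\tfrac{1}{2\pi}$ from Stirling is smaller than $1$ and can simply be dropped rather than absorbed into $c$.
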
 

\begin{proof}
By Theorem \ref{t:complexcomponents} and \eqref{e:componentcount}, together with Stirling's formula, if $\frac{1}{2} \leq \frac{i}{j} \leq 2$ and $\ell \leq ij-i-j$, then there is an absolute constant $c$ such that,
\begin{align}
    \mathbb{E}\left(X(i,j,\ell)\right) &= \binom{n}{i}\binom{n}{j} C(i,j,\ell) p^{k+\ell} (1-p)^{kn -ij -k-\ell} \nonumber\\
     &\leq \frac{(n)_i}{i!}\frac{(n)_j}{j!}i^{j}j^{i}(i+j)^{\frac{3\ell-1}{2}}\left(\frac{c}{\ell}\right)^\frac{\ell}{2}p^{k+\ell}(1-p)^{kn-ij-k-\ell} \nonumber\\
     &\leq \frac{e^{k}}{\sqrt{ijk}} \left(\frac{i}{j}\right)^{j-i}\left(\frac{ck^3}{\ell n^2}\right)^\frac{\ell}{2}\frac{(n)_i(n)_j}{n^k} (1+\epsilon)^{k+\ell}\left(1-\frac{1+\epsilon}{n}\right)^{kn-ij-k-\ell}. \label{e:complexinitial}
 \end{align}
 Hence, by \eqref{e:complexinitial}, \eqref{e:fallingfactorialbound} and \eqref{e:common}, we see that
\begin{align*}
\mathbb{E}\left(X(i,j,\ell)\right) &\leq \frac{1}{\sqrt{ijk}} \left(\frac{i}{j}\right)^{j-i}  \left(\frac{ck^3}{\ell n^2}\right)^\frac{\ell}{2} \exp \left(-\delta k + \frac{\epsilon ij}{n}  -\frac{(i-j)^2}{2n} + O\left(\frac{k}{n}\right) + \ell \log (1+\epsilon) + \frac{\ell(1+\epsilon)}{n} \right) \nonumber\\
&\leq \frac{1}{\sqrt{ijk}} \left(\frac{i}{j}\right)^{j-i}  \left(\frac{ck^3}{\ell n^2}\right)^\frac{\ell}{2} \exp \left(-\delta k + \frac{\epsilon k^2}{4n}  -\frac{(i-j)^2}{2n} + O\left(\frac{k}{n}\right) + \ell \log (1+\epsilon) + \frac{\ell(1+\epsilon)}{n} \right).
\end{align*}
\end{proof} 

\subsection{More about components}
Since we only have good estimates for $C(i,j,\ell)$ when $i$ and $j$ are comparable in size, it will be useful to show that the expected number of components of a given excess and order is dominated by the contribution from those which are `evenly spread' across the partition classes, and we should perhaps expect by the symmetry in the model that this is the case for most components. For the most part, we are able to get away with considering a relatively weak notion of `evenly spread'.

We say a component $C$ of $G(n,n,p)$ is \emph{balanced} if $|C \cap N_1| \leq 2 |C \cap N_2|$ and $|C \cap N_2| \leq 2|C \cap N_1|$, and \emph{unbalanced} otherwise. The following lemma will be useful for simplifying certain calculations, which roughly says that we do not expect there to be any large unbalanced components in $G(n,n,p)$.

\begin{lemma}\label{l:balanced} 
Let $\epsilon = \epsilon(n)>0$ be such that $\epsilon^3n \rightarrow \infty$ and $\epsilon=o(1)$, let $p=\frac{1 + \epsilon}{n}$, and let $\alpha=\alpha(n) \rightarrow \infty$ be an increasing function. 
\begin{enumerate}[(i)]
\item\label{i:unbalanced} With probability $1 - O\left(n^{-1} \right)$, $G(n,n,p)$ contains no unbalanced components of order $\geq 2000 \log n$.
\item\label{i:non-treeunbalanced} With probability $1 - e^{-\Omega(\alpha)}$, $G(n,n,p)$ contains no unbalanced non-tree components of order $\geq \alpha$.
\item\label{i:unbalancedcomplex} With probability $1 - O\left(n^{-1} \right)$, $G(n,n,p)$ contains no unbalanced complex components.
 \end{enumerate}	
\end{lemma}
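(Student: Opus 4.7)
The plan is to prove each of (i), (ii), (iii) by a first-moment argument, controlling the expected number of unbalanced components of the relevant type and size. By symmetry we may restrict attention to components with $|L\cap N_1|=i\geq 2j=2|L\cap N_2|$, and set $k=i+j$, so that $i-j\geq k/3$. The main upper bound we rely on is the spanning-tree-plus-isolation estimate
\[
\sum_{\ell\geq -1} \mathbb{E}(X(i,j,\ell))\;\leq\; B(i,j)\;:=\;\binom{n}{i}\binom{n}{j}\,i^{j-1}j^{i-1}\,p^{k-1}(1-p)^{kn-2ij},
\]
which follows by bounding the probability that a fixed vertex subset of sizes $(i,j)$ is a component by (expected number of spanning trees in the subset)$\,\cdot\,$(probability of no external edges), using the count $i^{j-1}j^{i-1}$ of spanning trees of $K_{i,j}$ from Theorem \ref{t:scoins}. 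Applying Stirling yields
\[
B(i,j)\;\lesssim\;\frac{n}{2\pi(ij)^{3/2}}\Bigl(\frac{j}{i}\Bigr)^{i-j}\exp\!\Bigl(-\delta k+\frac{2(1+\epsilon)ij}{n}\Bigr),
\]
and since $i\geq 2j$ forces $(j/i)^{i-j}\leq 2^{-(i-j)}\leq 2^{-k/3}$, the bound decays exponentially in $k$.

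For part (i) I would sum $B(i,j)$ over all unbalanced pairs with $k\geq 2000\log n$, noting that the constraints $i\leq n$, $j\leq n$, $i\geq 2j$ force $k\leq 3n/2$. I split into $k\leq\sqrt n$ (where $e^{2ij/n}=1+o(1)$ and the per-$k$ bound $O(nk^{-2}2^{-k/3})$ sums to $O(n^{1-(2000\log 2)/3})=o(n^{-1})$) and $\sqrt n\leq k\leq 3n/2$, in which I write $(a,b)=(i/n,j/n)$ and expand all ingredients via Stirling to obtain $\log B(i,j)\leq nf(a,b)+o(n)$ where
\[
f(a,b)=H(a)+H(b)+a\log b+b\log a-(a+b)+2ab,
\]
with $H(x)=-x\log x-(1-x)\log(1-x)$. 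A calculus check gives $f(a,b)\leq -\tfrac{1}{3}(a-b)\log 2$ on $\{b\leq a/2\}$, so $B(i,j)\leq e^{-\Omega(k)}$, and the sum over $O(n^2)$ pairs and $k\geq\sqrt n$ still yields $o(n^{-1})$.

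Parts (ii) and (iii) follow by the same scheme with sharper first-moment bounds reflecting the extra edge(s). For non-tree components, counting (spanning tree, extra edge) pairs gives
\[
\sum_{\ell\geq 0}\mathbb{E}(X(i,j,\ell))\;\leq\;B(i,j)\cdot ijp\;\leq\;B(i,j)\cdot\frac{(1+\epsilon)k^2}{4n},
\]
and the polynomial factor $k^2/n$ is swamped by $2^{-k/3}$; summing over $k\geq\alpha$ gives a geometric tail bounded by $e^{-\Omega(\alpha)}$, proving (ii). For complex components, counting (spanning tree, two extra edges) pairs introduces a further factor $(ijp)^2/2=O(k^4/n^2)$; summing the resulting per-$k$ bound $O(k^2/(n\cdot 2^{k/3}))$ over all relevant $k\geq 5$ (and handling the contribution of small $k$ directly, where $B(i,j)=O(n)$ combined with the $1/n^2$ factor gives $O(1/n)$ per pair) yields the $O(n^{-1})$ bound for (iii).

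The main difficulty is the regime $k=\Theta(n)$ in part (i), where the asymptotic expansion in Theorem \ref{t:treeexpectation} is inaccurate (the error $O((i^4+j^4)/n^3)$ inside the exponential is $\Theta(n)$). Handling this requires the Stirling-based entropy function $f(a,b)$ and a careful verification that $f$ is negative enough on the unbalanced region: along the boundary $a=2b\to 0$ one computes $f(2b,b)=-b\log 2+\tfrac{3}{2}b^2+O(b^3)$, while in the interior both $a,b$ are bounded away from $0$ and compactness gives $f$ uniformly bounded away from $0$ (with, for example, $f(2/3,1/3)\approx -0.15$). Monotonicity of $f$ in $a$ for fixed $b$, checked by computing $\partial f/\partial a$, extends the estimate from the boundary $a=2b$ to the full unbalanced region $a\geq 2b$. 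Once this is in place, all three parts reduce to routine first-moment computations.
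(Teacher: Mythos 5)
Your argument is essentially the paper's: a first-moment bound on unbalanced components via the expected number of (spanning tree, excess edges) substructures that are otherwise isolated, Scoins' count $i^{j-1}j^{i-1}$ of spanning trees of $K_{i,j}$, and the key observation that $\left(\frac{i}{j}\right)^{j-i}\le 2^{-k/3}$ on the unbalanced region $\{j\ge 2i\}$, with parts (ii) and (iii) differing only in the number of extra edges attached to the spanning tree. Where you genuinely diverge is in the regime $k=\Theta(n)$. The paper bounds $ij\le 2k^2/9$ and then asserts
\[
\frac{2(1+2\epsilon)k^2}{9n}-\frac{k\log 2}{3}\le k\left(\frac{2(1+2\epsilon)}{9}-\frac{\log 2}{3}\right)\le -\frac{k}{1000},
\]
a step that uses $k\le n$; however unbalanced components can have order up to $\tfrac{3}{2}n$, and for $k$ roughly in the range $\left(\frac{3n\log 2}{2},\frac{3n}{2}\right]$ the left-hand side above is in fact positive, so that link of the paper's chain does not close. (The slack comes from discarding the factor $e^{-(i-j)^2/(2n)}$ and from the Gaussian bound $(n)_j\le n^j e^{-(j-1)^2/(2n)}$, which is a factor $e^{\Theta(n)}$ larger than the truth when $j$ is close to $n$.) Your entropy-function treatment with $f(a,b)=H(a)+H(b)+a\log b+b\log a-(a+b)+2ab$ is a clean way to handle this; I have checked your claimed inequality $f(a,b)\le -\tfrac{1}{3}(a-b)\log 2$ on $\{b\le a/2\}$ at several points, including $f(2/3,1/3)\approx -0.15$ and $f(1,1/2)=-1/2$, and it appears correct, though the calculus verification you allude to would of course need to be written out in full. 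The remaining small-$k$ and part (ii)/(iii) computations in your proposal are routine and match the paper's.
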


\begin{proof}
Every unbalanced component of order $k$ with excess at least $\ell$ must contain a spanning tree (of order $k$) together with $\ell+1$ extra edges which is otherwise disconnected from the rest of the graph. Hence, $G(n,n,p)$ contains a component of order $k$ and excess at least $\ell$ if and only if $G(n,n,p)$ contains such a substructure. Let us denote by $Y(k,\ell)$ the number of such substructures. It follows that if $Y(k,\ell)=0$, then $G(n,n,p)$ contains no components of order $k$ with excess at least $\ell$.

In order to count the expected size of $Y(k,\ell)$, we note that we can specify such a substructure $S$ by choosing $i$ vertices in the first partition class and $j$ vertices in the second, such that that $i+j=k$ and either $j \geq 2i$ or $i \geq 2j$, choosing one of the $i^{j-1}j^{i-1}$ possible bipartite spanning trees on these vertices, and then choosing one of the at most $\binom{ij}{\ell+1}$ possible sets of $\ell+1$ extra edges. Note that the number of non-edges from these $k$ vertices to the other vertices in $G(n,n,p)$ is $i(n-j) + j(n-i) = kn - 2ij$ (see Figure \ref{f:excessell}).

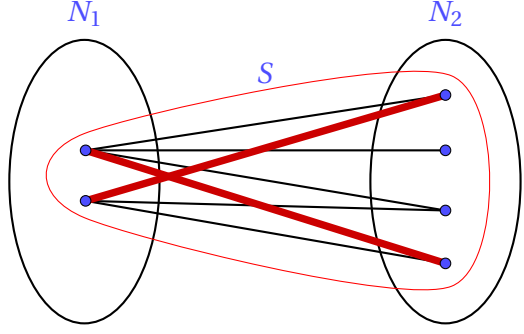
\begin{figure}
    \centering
\definecolor{ccqqqq}{rgb}{0.8,0.,0.}
\definecolor{ududff}{rgb}{0.30196078431372547,0.30196078431372547,1.}
\begin{tikzpicture}[line cap=round,line join=round,>=triangle 45,x=1.0cm,y=1.0cm, scale=0.8]
\clip(3.5,2.6) rectangle (12.5,8);
\draw [rotate around={90.:(5.,5.)},line width=0.8pt] (5.,5.) ellipse (2.357329608818574cm and 1.2477992164658225cm);
\draw [rotate around={90.:(11.,5.)},line width=0.8pt] (11.,5.) ellipse (2.357329608818563cm and 1.2477992164658167cm);
\draw [line width=0.8pt] (11.,3.64)-- (5.02,4.68);
\draw [line width=0.8pt] (5.02,4.68)-- (11.,4.52);
\draw [line width=0.8pt] (11.,4.52)-- (5.04,5.52);
\draw [line width=0.8pt] (5.02,5.52)-- (11.,5.52);
\draw [line width=0.8pt] (5.02,5.52)-- (11.,6.44);
\draw [line width=2.8pt,color=ccqqqq] (5.02,4.68)-- (11.,6.44);
\draw [line width=2.8pt,color=ccqqqq] (5.02,5.52)-- (11.,3.64);
\begin{scriptsize}
\draw [fill=ududff] (5.02,5.52) circle (2.5pt);
\draw [fill=ududff] (5.02,4.68) circle (2.5pt);
\draw [fill=ududff] (11.,6.44) circle (2.5pt);
\draw [fill=ududff] (11.,5.52) circle (2.5pt);
\draw [fill=ududff] (11.,4.52) circle (2.5pt);
\draw [fill=ududff] (11.,3.64) circle (2.5pt);
\draw[color=ududff] (5,7.8) node {\large $N_1$};
\draw[color=ududff] (11,7.8) node {\large $N_2$};
\draw[color=ududff] (8,6.8) node {\large $S$};
\draw [red] plot [smooth cycle] coordinates {(5,5.8) (5,4.4) (11.1,3.26) (11.1,6.76)};
\end{scriptsize}
\end{tikzpicture}
    \caption{A substructure $S$ (in the proof of Lemma \ref{l:balanced}) with $i=2$ vertices in $N_1$ and $j=4$ vertices in $N_2$ containing a spanning tree (whose edges are drawn with thin edges) and $\ell=2$ excess edges (which are drawn with thick edges), where none of the $kn-2ij$ edges from $V(S)$ to the rest of the graph are in $G(n,n,p)$.}
    \label{f:excessell}
\end{figure}

It follows that we can bound
\begin{align*}
		    \mathbb{E}\left(Y(k,\ell)\right) \leq \sum_{(i,j) \in U_k}\binom{n}{i}\binom{n}{j} i^{j-1}j^{i-1} \binom{ij}{\ell+1} p^{k+\ell}(1-p)^{kn-2ij}, 
		    \end{align*}
where $U_k = \left\{(i,j) \in \mathbb{N}^2 \colon i+j=k \text{ and } i \geq 2j \text{ or } j \geq 2i\right\}$, and we note that $|U_k|\leq k$.

Therefore, using \eqref{e:fallingfactorialbound}, \eqref{e:common} and Stirling's approximation, we can bound the expected number by
		    
\begin{align*}
		   \mathbb{E}(Y(k,\ell)) &\leq  n^{-\ell}e^{-\delta k } \sum_{(i,j) \in U_k}\frac{(ij)^{\ell - \frac{1}{2}}}{2\pi}\left(\frac{i}{j}\right)^{j-i} \exp\left(-\frac{i^2+j^2}{2n} +\frac{(1+\epsilon)2ij}{n}  + O\left(\frac{k}{n}\right) \right) \\
		    &\leq n^{-\ell} \sum_{(i,j) \in U_k} (ij)^{\ell - \frac{1}{2}} \left(\frac{i}{j}\right)^{j-i} \exp \left(\frac{ij(1+2\epsilon)}{n} + O(1) \right),
		   	\end{align*}
since $ e^{x} <1$ for $x<0$, $-i^2-j^2 + 2ij < 0$ and $i,j \leq k \leq n$.
		   	
However, if $j \geq 2i$ and $i+j=k$, then $j \geq \frac{2k}{3}$ and so $j-i \geq \frac{j}{2} \geq \frac{k}{3}$, and $ij\leq \frac{2k^2}{9}$. It follows that $\left(\frac{i}{j}\right)^{j-i} \leq \left(\frac{1}{2}\right)^{\frac{k}{3}}$. A similar calculation holds if $i \geq 2j$.  Hence the expected number of such substructures is at most
		\begin{align*}
	    \mathbb{E}(Y(k,\ell)) \leq n^{-\ell}\exp \left(\frac{2(1+2\epsilon)k^2}{9n} - \frac{k \log 2  }{3}+ O(1) \right)\sum_{(i,j) \in U_k}  (ij)^{\ell - \frac{1}{2}}
	    \leq n^{-\ell}e^{-\frac{k}{1000}}\sum_{(i,j) \in U_k}  (ij)^{\ell - \frac{1}{2}},
		\end{align*}
		since $\frac{2(1+2\epsilon)k^2}{9n} - \frac{k \log 2  }{3} +O(1)\leq k\left(\frac{2(1+2\epsilon)}{9} - \frac{ \log 2  }{3}\right) + O(1) \leq -\frac{k}{1000}$  when $\epsilon$ is sufficiently small.
		
Hence, if we let $Y_{\geq r}(\ell) = \sum_{k\geq r} Y(k,\ell)$, then with $r=2000 \log n$
	    \[
	    \mathbb{E}\left(Y_{\geq r}(-1)\right) \leq n \sum_{k\geq r} e^{-\frac{k}{1000}} \sum_{(i,j) \in U_k}  (ij)^{-\frac{3}{2}} \leq n \sum_{k\geq r} e^{-\frac{k}{1000}} =  O\left( \frac{1}{n} \right).
	    \]
Hence, by Markov's inequality, with probability $1 - O\left( n^{-1} \right)$, $Y_{\geq r}(-1)=0$ and in particular there are no unbalanced components of order at least $2000 \log n$.
		
Similarly, if $\alpha = \alpha(n) \rightarrow \infty$ is an increasing function, then
\[
\mathbb{E}\left(Y_{\geq \alpha}(0)\right) \leq \sum_{k\geq \alpha} e^{-\frac{k}{1000}} \sum_{(i,j) \in U_k}  (ij)^{-\frac{1}{2}} \leq  \sum_{k\geq \alpha} \sqrt{k} e^{-\frac{k}{1000}} =  O\left( e^{-\frac{\alpha}{2000}}\right),
\]
and so, again by Markov's inequality, with probability $1 -  e^{-\Omega(\alpha)}$, there are no unbalanced components of order at least $\alpha$ with excess greater than zero, and so in particular no unicyclic components of order at least $\alpha$.

Finally, we see that
\[
\mathbb{E}(Y_{\geq 1}(1)) \leq \frac{1}{n}\sum_{k\geq 1} e^{-\frac{k}{1000}} \sum_{(i,j) \in U_k} (ij)^{\frac{1}{2}} \leq  \frac{1}{n}\sum_{k\geq 1} k^2 e^{-\frac{k}{1000}} = O \left( \frac{1}{n} \right),
\]
and so as before with probability $1 - O \left( n^{-1}\right)$ there are no unbalanced complex components.
\end{proof} 

For most applications the rather coarse notion of balanced is enough for our purposes, but in one case we will need to restrict our attention to components which are much more evenly distributed over the partition classes. We say a component $C$ of $G(n,n,p)$ is \emph{$\epsilon$-uniform} if $\big||C \cap N_1| - |C \cap N_2|\big| < \epsilon^{\frac{1}{4}}\sqrt{n}$.

\begin{lemma}\label{l:uniform}
Let $\epsilon = \epsilon(n) > 0$ be such that $\epsilon^3 n \rightarrow \infty$ and $\epsilon = o (1)$, and let $p=\frac{1 + \epsilon}{n}$. Then with probability $1- o\left( n^{-1}\right)$, $G(n,n,p)$ contains no non-$\epsilon$-uniform tree components of order at most $n^\frac{2}{3}$.
\end{lemma}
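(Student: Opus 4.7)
The plan is to bound, via Theorem~\ref{t:treeexpectation} and the first-moment method, the expected number $N$ of tree components of $G(n,n,p)$ that are not $\epsilon$-uniform and have order at most $n^{2/3}$, and then conclude by Markov's inequality. Setting $D := \epsilon^{1/4}\sqrt{n}$, such a component is counted by $X(i,j,-1)$ for some pair $(i,j)$ with $k := i+j \leq n^{2/3}$ and $d := i-j$ satisfying $|d| \geq D$.

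The first step is to re-express Theorem~\ref{t:treeexpectation} in terms of $k$ and $d$. Substituting $i = (k+d)/2$ and $j = (k-d)/2$ gives $ij = (k^2-d^2)/4$ and $(i/j)^{j-i} = \left(\frac{k-d}{k+d}\right)^d$, and the Taylor estimate $\log\frac{1-x}{1+x} \leq -2x$ for $0 \leq x < 1$ yields $\left(\frac{k-d}{k+d}\right)^d \leq \exp(-2d^2/k)$ for every admissible $d$. Dropping the negative $-(i^3+j^3)/(6n^2)$ factor, Theorem~\ref{t:treeexpectation} becomes the upper bound
\[
\mathbb{E}(X(i,j,-1)) \lesssim \frac{n}{(k^2-d^2)^{3/2}} \exp\left(-\delta k + \frac{\epsilon k^2}{4n} - \frac{2d^2}{k} - \frac{d^2}{2n} + O\left(\frac{k}{n} + \frac{k^4}{n^3}\right)\right).
\]
Since $\epsilon^3 n \to \infty$ forces $\epsilon n \gg n^{2/3} \geq k$, the shift $\epsilon k^2/(4n)$ is absorbed into $\delta k/2$, and the $O$-terms are $O(n^{-1/3})$.

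Next I would estimate, for each fixed $k$, the tail sum $\sum_{|d| \geq D}$ by splitting on whether $|d| \leq k/2$ or not. In the first regime $(k^2-d^2)^{3/2} \gtrsim k^3$, and there are at most $k$ valid values of $d$, each contributing a factor $\exp(-2d^2/k) \leq \exp(-2D^2/k)$; the key observation is that for every $k \leq n^{2/3}$,
\[
\frac{D^2}{k} \geq \frac{\epsilon^{1/2} n}{n^{2/3}} = \epsilon^{1/2} n^{1/3} \gg n^{1/6},
\]
since $\epsilon \gg n^{-1/3}$, so $\exp(-2D^2/k)$ is super-polynomially small. In the second regime $|d| > k/2$, writing $s := k-|d| \geq 2$ gives $(k^2-d^2)^{-3/2} \leq (sk)^{-3/2}$ and $\exp(-2d^2/k) \leq \exp(-k/2)$, so the convergent series $\sum_{s \geq 2} s^{-3/2}$ bounds the $d$-sum by $O(n k^{-3/2} e^{-k/2})$.

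Summing the two regimes over $k$ with $D+2 \leq k \leq n^{2/3}$ yields
\[
\mathbb{E}(N) = O\!\left(n e^{-n^{1/6}}\right) + O\!\left(n D^{-3/2} e^{-D/2}\right),
\]
which is super-polynomially small since $D = \epsilon^{1/4}\sqrt{n} \gg n^{5/12}$; in particular $\mathbb{E}(N) = o(n^{-1})$, and Markov's inequality completes the proof. The main technical care is handling the singularity of $(k^2-d^2)^{-3/2}$ as $|d| \to k-2$; this is where the second regime is essential, and the exponential decay of $\left(\frac{k-d}{k+d}\right)^d$ (at least $e^{-k/2}$ in that range) dominates comfortably.
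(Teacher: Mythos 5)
Your proof is correct and uses essentially the same first-moment argument as the paper: both apply Theorem~\ref{t:treeexpectation}, exploit the super-exponential decay of $(i/j)^{j-i}$ for non-$\epsilon$-uniform pairs of order at most $n^{2/3}$ (you write this as $e^{-2d^2/k}\le e^{-2\sqrt{\epsilon}n^{1/3}}\le e^{-n^{1/6}}$, the paper as $(i/j)^{j-i}\le e^{-\sqrt{\epsilon}n^{1/3}}$), and conclude via Markov. Your split into the regimes $|d|\le k/2$ and $|d|>k/2$ to tame the $(k^2-d^2)^{-3/2}$ singularity is a bit more elaborate than necessary --- the paper just bounds $\sum_{(i,j)\in U_k}(ij)^{-3/2}$ crudely by the number of terms times the largest one, which already suffices since the $e^{-n^{1/6}}$ factor swallows any polynomial loss --- but both give the required super-polynomially small expectation.
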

\begin{proof}
As in the previous lemma, let us write $U_k = \left\{(i,j) \in \mathbb{N}^2 \colon i+j=k, |i-j| \geq \epsilon^{\frac{1}{4}}\sqrt{ n}\right\}$ for the pairs $(i,j)$ representing non-$\epsilon$-uniform components. Note that, if $(i,j) \in U_k$ and $k \leq n^{\frac{2}{3}}$, then
\[
\left(\frac{i}{j}\right)^{j-i} \leq \left(1-\frac{\epsilon^{\frac{1}{4}}\sqrt{n}}{ n^\frac{2}{3}}\right)^{\epsilon^{\frac{1}{4}}\sqrt{n}} \leq e^{-\sqrt{\epsilon} n^{\frac{1}{3}}}.
\]

Then, using \eqref{e:treeexpectation}, we can bound the expected number of non-$\epsilon$-uniform tree components of order at most $n^\frac{2}{3}$ by
      \begin{align*}
         \sum_{k=1}^{n^\frac{2}{3}} \sum_{(i,j) \in U_k} \mathbb{E}\left(X(i,j,-1)\right)
         &\leq \sum_{k=1}^{n^\frac{2}{3}} \sum_{(i,j) \in U_k} \frac{n}{2\pi(ij)^{\frac{3}{2}}} \left(\frac{i}{j}\right)^{j-i} \exp\left(\frac{\epsilon ij}{n} + o(1) \right)\\
          &\leq \sum_{k=1}^{n^\frac{2}{3}}n e^{\frac{\epsilon k^2}{4n}-\sqrt{\epsilon} n^{\frac{1}{3}}}\sum_{(i,j) \in U_k} \frac{1}{(ij)^\frac{3}{2}}\leq \sum_{k=1}^{n^\frac{2}{3}}\frac{n}{k^{\frac{1}{2}}}e^{\frac{\epsilon k^2}{4n}-\sqrt{\epsilon} n^{\frac{1}{3}}}.
          \end{align*}

However, since $k \leq n^{\frac{2}{3}}$ and $\epsilon^3 n \rightarrow \infty$, it follows that
\[
\frac{\epsilon k^2}{4n}-\sqrt{\epsilon} n^{\frac{1}{3}} = -\Omega\left( \sqrt{\epsilon} n^{\frac{1}{3}}\right) \leq - n^{\frac{1}{6}}.
\]
It follows that the expected number of non-$\epsilon$-uniform tree components of order at most $n^\frac{2}{3}$ is at most
\[
ne^{-n^{\frac{1}{6}}}\sum_{k=1}^{n^\frac{2}{3}}\frac{1}{k^{\frac{1}{2}}} \leq n^\frac{4}{3} e^{-n^\frac{1}{6}} =o\left(n^{-1}\right).
\]
Hence, the result follows by Markov's inequality.
\end{proof} 

It will also be useful to have a bound on the variance of the number of vertices in $\epsilon-$uniform tree components with small order, which is given by the following lemma, whose proof is given in Appendix \ref{s:Append B}.
\begin{lemma}\label{l:Var}
Let $\epsilon = \epsilon(n) >0$ be such that $\epsilon^3n \rightarrow \infty$ and $\epsilon = o(1)$, and let $p = \frac{1+\epsilon}{n}$. Given $\Tilde{k},a \in \mathbb N$, set  $Z_a=\sum_{k=1}^{\Tilde{k}}k^a Z(k)$ where $Z(k)$ is the number of $\epsilon$-uniform tree components of order $k$ in $G(n,n,p)$. If $\Tilde{k}\leq n^\frac{2}{3}$ and $\frac{3 \epsilon \Tilde{k}^2}{n}\leq 1$, then $\emph{Var}(Z_1) = O\left(\frac{n}{\epsilon}\right).$
\end{lemma}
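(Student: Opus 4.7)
The plan is to bound $\mathrm{Var}(Z_1) = \mathbb{E}(Z_1^2) - \mathbb{E}(Z_1)^2$ by expanding $Z_1 = \sum_{k=1}^{\tilde{k}} k Z(k)$ and splitting each summand $Z(k_1)Z(k_2)$ according to whether the two counted tree components coincide or are distinct (hence vertex-disjoint). This yields
\[
\mathrm{Var}(Z_1) = \sum_k k^2\,\mathbb{E}(Z(k)) \;+\; \sum_{k_1,k_2} k_1 k_2\bigl[W(k_1,k_2) - \mathbb{E}(Z(k_1))\mathbb{E}(Z(k_2))\bigr],
\]
where $W(k_1,k_2)$ is the expected number of ordered pairs of distinct $\epsilon$-uniform tree components of orders $k_1$ and $k_2$.

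For the diagonal sum, equal to $\mathbb{E}(Z_2)$, Theorem~\ref{t:treeexpectation} together with the substitution $i=(k-d)/2$, $j=(k+d)/2$ and Lemma~\ref{l:difference} applied to $\sum_d \left(\frac{k-d}{k+d}\right)^d e^{-d^2/(2n)}$ yields $\mathbb{E}(Z(k)) \lesssim \frac{n}{k^{5/2}}\, e^{-\delta k + \epsilon k^2/(4n)}$. Since $\epsilon \tilde{k}^2/n < 1/3$, the quadratic factor in the exponent is bounded, and by Lemma~\ref{l:spencer} the remaining sum $\sum_{k=1}^{\tilde k} k^{-1/2}e^{-\delta k}$ is comparable to $\int_0^\infty x^{-1/2}e^{-\delta x}\,dx = \Gamma(1/2)\delta^{-1/2}$, so $\mathbb{E}(Z_2) = O(n/\sqrt{\delta}) = O(n/\epsilon)$.

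For the off-diagonal sum, $W(k_1,k_2)$ is computed in analogy with~\eqref{e:componentcount} by choosing vertex-disjoint vertex sets, assigning spanning trees via $C(i_s,j_s,-1)$, and imposing the required edge and non-edge conditions. Factoring out $\mathbb{E}(X(i_1,j_1,-1))\mathbb{E}(X(i_2,j_2,-1))$ leaves a correction
\[
R = \frac{(n-i_1)_{i_2}(n-j_1)_{j_2}}{(n)_{i_2}(n)_{j_2}}\,(1-p)^{-i_1 j_2 - i_2 j_1},
\]
where the first factor accounts for the vertex-disjointness and the second compensates for the $i_1 j_2 + i_2 j_1$ bipartite edges between $V(T_1)$ and $V(T_2)$ that would otherwise be counted twice as forced non-edges. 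Writing $i_s=(k_s-d_s)/2$, $j_s=(k_s+d_s)/2$ and Taylor-expanding,
\[
\log R = \frac{\epsilon k_1 k_2 - (2+\epsilon)\,d_1 d_2}{2n} + O\Big(\frac{k_1 k_2 (k_1+k_2)}{n^2}\Big),
\]
and each summand is $O(1)$ in the allowed range, so $R-1 = O(|\log R|)$. Summing the leading contribution $\epsilon k_1 k_2/(2n)$ against $k_1 k_2\,\mathbb{E}(Z(k_1))\mathbb{E}(Z(k_2))$ gives $\frac{\epsilon}{2n}\mathbb{E}(Z_2)^2 = O(n/\epsilon)$. The $d_1 d_2/n$ term has the form of an odd function of each $d_s$ weighted by the symmetric Gaussian factor $e^{-d_s^2/(2n)}$ in $\mathbb{E}(X(i_s,j_s,-1))$, so it vanishes to leading order by parity; the remaining $O(k^3/n^2)$ corrections are subdominant via a similar computation using $\mathbb{E}(Z_3) = O(n/\epsilon^3)$ and the hypothesis $\epsilon^3 n \to \infty$.

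The main difficulty is the delicate cancellation inside $R$: the multinomial factor is less than $1$ while the non-edge factor exceeds $1$, and only because $\epsilon$-uniformity forces $i\approx j$ do their logarithms nearly cancel, leaving the small leading correction $\epsilon k_1 k_2/(2n)$ that is precisely of the right size to match the diagonal contribution and yield the final $O(n/\epsilon)$ bound.
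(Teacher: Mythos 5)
Your proposal follows essentially the paper's route (expand $Z_1^2$ over ordered pairs, factor out $\mu_{i_1,j_1}\mu_{i_2,j_2}$ with a correction $R$ built from falling-factorial and non-edge terms, split $\log R$ into an $\epsilon k_1k_2/n$ piece and a $d_1d_2/n$ piece, kill the latter by parity), and your algebra for $\log R$ is correct. But there is a genuine gap in how you pass from $\log R$ back to $R-1$. Writing $\log R = A + B + E$ with $A=\epsilon k_1k_2/(2n)$, $B=-(2+\epsilon)d_1d_2/(2n)$, $E=O(k^3/n^2)$, you handle $A$ via $\mathbb{E}(Z_2)$ and $B$ via parity, and you wave away the Taylor error $E$ via $\mathbb{E}(Z_3)$. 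What you never control is the second-order term in $e^{\log R}-1$, namely $\tfrac12(\log R)^2$, which contains $\tfrac{(2+\epsilon)^2}{4n^2}d_1^2d_2^2$. This term is \emph{even} in $d_1$ and $d_2$, so parity does nothing for it; and it is not expressible as any $\mathbb{E}(Z_a)$, since it weights by $d^2$ rather than by a power of $k$. Moreover a naive bound $d^2/n<\sqrt{\epsilon}$ (from $\epsilon$-uniformity) gives only $\epsilon\,\mathbb{E}(Z_1)^2=\Theta(\epsilon n^2)$, which is far larger than $n/\epsilon$ when $\epsilon^3 n\to\infty$.

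The paper handles exactly this via the auxiliary quantity $S=\sum_k k\sum_{(i,j)\in\Lambda_k}\mu_{i,j}(i-j)^2/n$, showing $S=O(\sqrt{n/\epsilon})$ and hence that the contribution factorizes as $S^2=O(n/\epsilon)$; that estimate is nontrivial, requiring a split in $k$, an application of H\"older's inequality, and the tail bound of Lemma~\ref{l:difference}. Also, note that your claim ``$R-1=O(|\log R|)$'' discards sign information and is therefore incompatible with the very parity cancellation you rely on — you need the signed expansion $e^x-1=x+\tfrac12 x^2 + O(x^3)$, and then you must actually bound the $x^2$ piece; this is the missing content.
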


\section{A finer look at component structure  of $G(n,n,p)$}\label{s:Finer}
Using the bounds from Section \ref{s:compt} on the expected number of components with a fixed order and excess, we can describe more precisely the component structure of $G(n,n,p)$.

\subsection{Distribution of the number of components: proof of Theorems~\ref{t:trees}--\ref{t:complex}}\label{s:components}

Firstly, as indicated in Theorem \ref{t:trees}, we show that whp there are no tree components in $G(n,n,p)$ whose order is significantly larger than $\frac{1}{\delta}\left(\log \left(|\epsilon|^3n\right) - \frac{5}{2}\log\log\left( |\epsilon|^3n\right)\right)$. Moreover, we show that the number of tree components of order around this tends to a Poisson distribution.
\begin{proof}[Proof of Theorem \ref{t:trees}]\ \\
\textbf{Part \eqref{i:treecomp}:}
Let us write $k_i=\frac{1}{\delta}\left( \log\left( |\epsilon|^3 n\right) - \frac{5}{2} \log \log \left( |\epsilon|^3 n\right) + r_i\right)$ for $i\in \{1,2\}$. Then for all $k_1 \leq k \leq k_2$, we have that $\frac{k}{n},\frac{k^3}{n^2}, \frac{k^4}{n^3}$ and $\frac{\epsilon k^2}{n}$ are all $o(1)$. Therefore, it follows from \eqref{e:treeexpectation} that
\begin{align*}
\mathbb{E}(Y_{r_1,r_2}) &\approx \frac{n}{2\pi}\sum_{k=k_1}^{k_2} e^{-\delta k } \sum_{i+j=k}\frac{1}{(ij)^{\frac{3}{2}}}\left(\frac{i}{j}\right)^{j-i} \exp\left(-\frac{(i-j)^2}{2n}\right)=\frac{4n}{\pi}\sum_{k=k_1}^{k_2} e^{-\delta k }  \sum_{d=-k+1}^{k-1}\frac{1}{(k^2 - d^2)^{\frac{3}{2}}}\left(\frac{k-d}{k+d}\right)^{d}\exp\left(-\frac{d^2}{2n}\right),
\end{align*}
where the last equality holds by reparameterising over $d=j-i$. Hence, by Lemma \ref{l:difference}, we have
\begin{align}\label{eq:E(Y_(r1,r2))}
    \mathbb{E}(Y_{r_1,r_2}) \approx \frac{2 \sqrt{2}n}{\sqrt{\pi}}\sum_{k=k_1}^{k_2} \frac{e^{-\delta k }}{k^{\frac{5}{2}}}.
\end{align}
 Now, for any $\frac{r_1}{\delta} \leq a \leq \frac{r_2}{\delta}$ and
    \[
    k=\frac{1}{\delta}\left( \log\left( |\epsilon|^3 n\right) - \frac{5}{2} \log \log \left( |\epsilon|^3 n\right)\right) + a,
    \]
 we have that
    \begin{equation*}
    k^{\frac{5}{2}} \approx 4 \sqrt{2} |\epsilon|^{-5} \left(\log\left( |\epsilon|^3 n\right)\right)^{\frac{5}{2}},
    \end{equation*}
     since $\delta \approx \frac{\epsilon^2}{2}$,
    and hence in this range
    \begin{equation}\label{e:kapproximation}
    \frac{e^{-\delta k}}{k^{\frac{5}{2}}} = \frac{\left(\log\left( |\epsilon|^3 n\right)\right)^{\frac{5}{2}} e^{-\delta a} }{|\epsilon^3|n k^{\frac{5}{2}}} \approx \frac{|\epsilon^2|e^{-\delta a}}{4\sqrt{2}n} \approx \frac{\delta e^{-\delta a}}{2\sqrt{2}n}.
    \end{equation}
    Hence, substituting \eqref{e:kapproximation} into \eqref{eq:E(Y_(r1,r2))} we obtain
    \begin{align*}
    \mathbb{E}(Y_{r_1,r_2})\approx  \frac{1}{\sqrt{\pi}}\sum_{a=\frac{r_1}{\delta} }^{\frac{r_2}{\delta}}\delta e^{-\delta a}
    \approx\frac{1}{\sqrt{\pi}}\int_{r_1}^{r_2}e^{-t}dt
    =\frac{1}{\sqrt{\pi}}\left(e^{-r_1}-e^{-r_2}\right)
    =:\lambda.
\end{align*}

Next, we calculate the expected value of $(Y_{r_1,r_2})_2$, i.e., the second factorial moment of $Y_{r_1,r_2}$, which is the expected number of ordered pairs of tree components whose orders lie between $r_1$ and $r_2$. We see that $\mathbb{E}\left((Y_{r_1,r_2})_2\right)$ can be calculated as
\begin{align*} \sum_{k = k_1}^{k_2} \sum_{i+j=k} \binom{n}{i}\binom{n}{j} i^{j-1}j^{i-1}p^{k-1}(1-p)^{kn-ij-k+1}\sum_{k' = k_1}^{k_2} \sum_{r+s=k'} \binom{n-i}{r}\binom{n-j}{s} r^{s-1} s^{r-1} p^{k'-1}(1-p)^{k'n-rs-is-rj-k'+1},
\end{align*}
and we note that the inner sum is the expected number of tree components of order between $k_1$ and $k_2$ in $G(n_1,n_2,p)$, where $n_1=n-i,n_2=n-j$. However, since $i,j \leq k_2 = o(n)$ the same argument as before shows that this inner sum is asymptotically equal to  $ \mathbb{E}(Y_{r_1,r_2})$, and hence 
\[
\mathbb{E}\left((Y_{r_1,r_2})_2\right) \approx \left(\mathbb{E}(Y_{r_1,r_2})\right)^2 \approx \lambda^2.
\]

A similar argument shows that the $i$th factorial moment $\mathbb{E}\left((Y_{r_1,r_2})_i\right) \approx \lambda^i$ for each $i \in \mathbb{N}$, and hence $Y_{r_1,r_2}$ converges in distribution to Po$(\lambda)$ by Lemma \ref{l:Poisson}.

\textbf{Part \eqref{i:treecomp2}:} Let us write $k_3=\frac{1}{\delta}\left( \log\left( |\epsilon|^3 n\right) - \frac{5}{2} \log \log \left( |\epsilon|^3 n\right) + \alpha \right)$ and $Y_{\geq \alpha}$ for the number of tree components of order at least $k_3$. From \eqref{e:treesinitial}, but using \eqref{e:fallingfactorialbound} instead of \eqref{e:fallingfactorial} to bound the falling factorial term, we can bound the expected value of $Y_{\geq \alpha}$ from above as

\begin{align*}
		   \mathbb{E}(Y_{\geq \alpha}) \leq n\sum_{k = k_3}^n e^{-\delta k } \sum_{i+j=k}\frac{1}{2\pi(ij)^{\frac{3}{2}}}\left(\frac{i}{j}\right)^{j-i} \exp\left(-\frac{(i-j)^2}{2n}-\frac{i^3+j^3}{6n^2} +\frac{\epsilon ij}{n} + O\left(\frac{k}{n}\right)\right).
\end{align*}

For any $k\leq n$ and $i+j=k$, we have that $\frac{\epsilon ij}{n} \leq \frac{\epsilon k^2}{4n}$ and also $\frac{i^3 + j^3}{ 6n^2} \geq \frac{k^3}{24n^2}$, and so
\begin{align*}
\mathbb{E}(Y_{\geq \alpha}) \leq O\left(n \sum_{k= k_3}^n \exp\left(-\delta k  + \frac{\epsilon k^2}{4n} - \frac{k^3}{24n^2}\right) \sum_{i+j=k}\frac{1}{2\pi(ij)^{\frac{3}{2}}}\left(\frac{i}{j}\right)^{j-i}\exp\left(-\frac{(i-j)^2}{2n} \right) \right).
\end{align*}
Then, reparameterising with $d=j-i$ and applying Lemma \ref{l:difference} as before gives us that, 
\begin{align}\label{e:largetreebound} 
\mathbb{E}(Y_{\geq \alpha}) = O\left( n\sum_{k = k_3}^n\frac{1}{k^{\frac{5}{2}}} \exp\left(-\delta k  - \frac{k^3}{24n^2} + \frac{\epsilon k^2}{4n}\right)\right).
\end{align}
Let $s= \epsilon n$, then we are interested in the function
\[
-\delta k  + \frac{\epsilon k^2}{4n} - \frac{k^3}{24n^2} = \frac{k}{n^2}\left( -\frac{\delta s^2}{\epsilon^2} + \frac{ sk}{4} - \frac{k^2}{24} \right).
\]
Now, since $-\frac{\delta s^2}{\epsilon^2} + \frac{ sk}{4} - \frac{k^2}{24}$ as a function of $k$ is a parabola, whose maximum comes at $k=3s$, we can bound
\begin{align}\label{e:parabolaminimum}
\frac{k}{n^2}\left(-\frac{\delta s^2}{\epsilon^2} + \frac{ sk}{4} - \frac{k^2}{24}\right) \leq k\left( -\delta + \frac{3\epsilon^2}{4} - \frac{9\epsilon^2}{24}\right) \leq -\frac{\delta k}{5}.
\end{align}
Hence, by \eqref{e:largetreebound} and \eqref{e:parabolaminimum}, we have
\begin{align*}
\mathbb{E}(Y_{\geq \alpha}) =O \left( n \sum_{k = k_3}^n \frac{1}{k^{\frac{5}{2}}} \exp\left(-\frac{\delta k}{5}\right) \right). 
\end{align*}

Hence, if $\alpha \geq 10 \log \left(|\epsilon|^3n\right)$, then
\begin{align}
\mathbb{E}(Y_{\geq \alpha}) &= O\left(n \sum_{k = k_3}^n \frac{1}{k^{\frac{5}{2}}} \exp\left(-\frac{\delta k}{5}\right)\right) =O\left(n \sum_{k \geq \alpha / \delta}\frac{1}{k^{\frac{5}{2}}} \exp\left(-\frac{\delta k}{5}\right)\right)
= O\left(e^{-\frac{\alpha}{10}} n  \sum_{k \geq \alpha/\delta}\frac{1}{k^{\frac{5}{2}}} \exp\left(-\frac{\delta k}{10}\right)\right) \nonumber\\
&= O\left(e^{-\frac{\alpha}{10}}\frac{n}{(\frac{\alpha}{\delta})^{\frac{5}{2}}} \sum_{k \geq \alpha/\delta}\exp\left(-\frac{\delta k}{10}\right) \right)
= O\left( e^{-\frac{\alpha}{10}} \frac{n\delta^{\frac{5}{2}}e^{-\frac{\alpha}{10}} }{\left(\log \left( |\epsilon|^3 n\right)\right)^{\frac{5}{2}}\left(1-e^{-\frac{\delta}{10}}\right)} \right)= O\left(e^{-\frac{\alpha}{10}} \frac{n\delta^{\frac{5}{2}}}{\delta\left(\log \left(|\epsilon|^3 n\right)\right)^{\frac{5}{2}}|\epsilon|^3 n}\right) \nonumber\\
&= O\left(e^{-\frac{\alpha}{10}} \frac{1}{\left(\log \left(|\epsilon|^3 n\right)\right)^{\frac{5}{2}}}\right) \leq  e^{-\Omega(\alpha)}. \nonumber
\end{align}

Finally, if $\alpha \leq 10\log\left( |\epsilon|^3n\right) := \hat{\alpha}$, let $k_4 = \frac{1}{\delta}\left( \log \left( |\epsilon|^3 n\right) - \frac{5}{2} \log \log\left( |\epsilon|^3 n\right) + \hat{\alpha} \right)$. We can argue as in the first part that 
\[
\mathbb{E}(Y_{\alpha,\hat{\alpha}})  = e^{-\Omega(\alpha)},
\]
since as in \eqref{e:kapproximation}, as long as $k = \frac{1}{\delta}\left( \log \left( |\epsilon|^3 n\right) - \frac{5}{2} \log \log \left( |\epsilon|^3 n\right) + \alpha \right) =\Theta\left(\frac{\log \left(|\epsilon|^3n\right)}{\delta}\right)$ we have that $e^{-\delta k}k^{-\frac{5}{2}} = \Theta\left(\delta e^{-\delta \alpha}n^{-1}\right)$. It follows that, 
\begin{align*}
\mathbb{E}(Y_{\geq \alpha}) = \mathbb{E}(Y_{\alpha,\hat{\alpha}}) + \mathbb{E}(Y_{\geq \hat{\alpha}}) = e^{-\Omega(\alpha)} + e^{-\Omega(\hat{\alpha})} = e^{-\Omega(\alpha)},
\end{align*}
and so the result follows from Markov's inequality.
\end{proof} 

Secondly, as indicated in Theorem \ref{t:unicyclic}, we show that whp there are no unicyclic components in $G(n,n,p)$ of order significantly larger than $\frac{1}{\delta}$, and moreover, that the number of unicyclic components of order around this tends to a Poisson distribution.
\begin{proof}[Proof of Theorem \ref{t:unicyclic}]\ \\
{\bf Part \eqref{i:unicomp}:} Let us write $s_i = \frac{u_i}{\delta}$ for $i \in \{1,2\}$. We first note that, by Lemma \ref{l:balanced}, $G(n,n,p)$ contains no unbalanced non-tree components of order $\geq s_1$ with probability $1 - e^{-\Omega(s_1)}$, and hence whp $Z_{u_1,u_2} = Z'_{u_1,u_2}$ where $Z'_{u_1,u_2}$ is the number of unicyclic balanced components with order between $s_1$ and $s_2$.

Let us write $B_k = \left\{ (i,j) \in \mathbb{N}^2 \colon i+j=k \text{ and } \frac{1}{2}\leq \frac{i}{j} \leq 2 \right\}$. Since for $s_1 \leq k \leq s_2$ we have that $\frac{k}{n},\frac{k^3}{n^2}, \frac{k^2}{n^2}$ and $\frac{k^4}{n^3}$ are all $o(1)$, it follows from \eqref{e:unicyclicexpectation} and Lemma \ref{l:difference} that 

\begin{align}
	    \mathbb{E}(Z'_{u_1,u_2})&=\sum_{k=s_1}^{s_2}\sum_{(i,j) \in B_k}\mathbb{E}\left(X(i,j,0)\right)\approx \frac{1}{4\sqrt{2 \pi}}\sum_{k=s_1}^{s_2} \sqrt{k}e^{-\delta k} \sum_{(i,j) \in B_k} \frac{1}{ij} \left(\frac{i}{j}\right)^{j-i} \exp\left(-\frac{(i-j)^2}{2n} \right)\nonumber\\
	    &\approx \frac{1}{2} \sum_{k=s_1}^{s_2} \frac{1}{k} e^{-\delta k}\approx \frac{1}{2} \int_{u_1}^{u_2}\frac{e^{-t}}{t} dt:=\nu. \label{e:distunicyclic}
	    \end{align}
	    
As in Theorem \ref{t:trees} \eqref{i:treecomp} a similar argument shows that $\mathbb{E}\left((Z'_{u_1,u_2})_i\right) \approx \nu^i$ for all $i \in \mathbb{N}$ and hence $Z'_{u_1,u_2}$, and so also $Z_{u_1,u_2}$, converges in distribution to  Po$(\nu)$.

\textbf{Part \eqref{i:unicomp2}:} Let $s_3 = \frac{\alpha}{\delta}$ and let $Z_{\geq \alpha}$ and $Z'_{\geq \alpha}$ be the number of unicyclic components and balanced unicyclic components respectively of order at least $s_3$. Note that, as before, $Z_{\geq \alpha}=Z'_{\geq \alpha}$ with probability $1 - e^{-\Omega(s_3)} =  1  - e^{-\Omega(\alpha)}  $.

A similar argument as in Theorem \ref{t:trees} \eqref{i:treecomp2} shows that for any $i+j=k \leq n$
\begin{align}\label{eq: E(Z'geq alpha)}
\mathbb{E}\left(Z'_{\geq \alpha}\right) = O\left(\sum_{k= s_3}^n \sqrt{k} \exp\left(-\delta k  - \frac{k^3}{24n^2} + \frac{\epsilon k^2}{4n}\right) \sum_{i+j=k}\frac{1}{ij} \left(\frac{i}{j}\right)^{j-i}\exp\left(-\frac{(i-j)^2}{2n} \right)\right) 
= O\left(\sum_{k= s_3}^n \frac{1}{k} e^{-\frac{\delta k}{5}}\right).
\end{align}
On the other hand, it can be shown, see for example \cite[Formulas 5.1.1 and 5.1.20]{Abramowitz}, that
\[
E_1(x) := \int_{x}^\infty \frac{e^{-t}}{t} \,dt \leq e^{-x} \log \left(1 + \frac{1}{x} \right)
\]
and hence 
\begin{align}\label{eq: exponential integration}
    \sum_{k=s_3}^n \frac{1}{k}e^{-\frac{\delta k}{5}}\approx \int_{s_3}^n \frac{1}{u}e^{-\frac{\delta u}{5}}du=\int_{\frac{\alpha}{5}}^{\frac{\delta n}{5}}\frac{e^{-t}}{t}dt\leq e^{-\frac{\alpha}{5}}\log\left(1+\frac{5}{\alpha}\right)= e^{-\Omega(\alpha)}
\end{align}
for $\alpha\geq 5$.

By \eqref{eq: E(Z'geq alpha)} and \eqref{eq: exponential integration}, it follows that $\mathbb{E}\left(Z'_{\geq \alpha}\right)=e^{-\Omega(\alpha)}$. In the case where $1 < \alpha \leq 5$ we can use \eqref{e:distunicyclic} to see
\[
\mathbb{E}\left(Z'_{\geq \alpha}\right)=\mathbb{E}\left(Z'_{\alpha,5}\right) + \mathbb{E}\left(Z'_{\geq 5}\right) \leq \frac{1}{2} E_1(\alpha) + e^{-\Omega(1)} \leq e^{-\Omega(1)}.
\]
\end{proof} 

Finally, as indicated in Theorem \ref{t:complex}, we show that whp there are no large complex components in $G(n,n,p)$, and in fact no complex components at all in the subcritical regime.
\begin{proof}[Proof of Theorem \ref{t:complex}]\ \\
\textbf{Part \eqref{i:complexcomp}:} To show the first part, recalling that $\epsilon<0$, we use the observation that, since each complex component must contain a connected subgraph of excess precisely two, it is sufficient to show that whp $G(n,n,p)$ contains no such subgraphs.

In fact, it is sufficient to show that whp $G(n,n,p)$ contains no subgraph which is minimal with respect to the properties of being connected and having excess two, and we note that any such graph consists of a pair of cycles, which are either joined by a path or whose intersection is a path. Let us denote the number of such subgraphs by $Q$. The key observation is that any such graph of order $k$ can be built by taking a path on $k$ vertices and adding an edge from each of its endpoints to another vertex in the path. Hence, we can choose such a subgraph on $k$ vertices by first choosing the $i=\left\lfloor \frac{k}{2} \right\rfloor$ vertices of the path lying in one partition class and the $j=\left\lceil \frac{k}{2} \right\rceil$ vertices of the path lying in the other partition class, choosing the order which the vertices appear in the path in at most $i!j!$ many ways and then choosing for each endpoint of the path one of the at most $k$ many edges from this endpoint to another vertex in the path. It follows that
		\begin{align*}
		      \mathbb{E}(Q) &\leq 2\sum_{k=3}^{n} \binom{n}{k}\binom{n}{k}(k!)^2k^2p^{2k+1}+2\sum_{k=2}^{k_0} \binom{n}{k}\binom{n}{k+1}(k+1)!k!k^2p^{2k+2}\\
       &\leq 2\sum_{k=3}^{n}\frac{n^k}{k!}\frac{n^k}{k!}(k!)^2k^2 \left(\frac{1+\epsilon}{n}\right)^{2k+1}+2\sum_{k=2}^{k_0}\frac{n^k}{k!}\frac{n^{k+1}}{(k+1)!}(k+1)!k!k^2\left(\frac{1+\epsilon}{n}\right)^{2k+2} \\
       &\leq 4\sum_{k=2}^n \frac{k^2}{n}e^{2\epsilon k}\leq \frac{4}{n}\int_0^\infty x^2e^{2\epsilon x}dx=\frac{1}{|\epsilon|^3n}.
		\end{align*}
		Therefore, by Markov's inequality, with probability at least $1-\frac{1}{|\epsilon|^3 n}$ there are no complex components.

{\bf Part \eqref{i:complexcomp2} :} Recall that $\epsilon>0$.
As in  Theorem \ref{t:unicyclic}  \eqref{i:unicomp}, let $A(k,\ell)$ and $A'(k,\ell)$ be the number of components and balanced components respectively of order $k$ with excess $\ell \geq 1$. If we write $A = \sum_{k=1}^{n^{\frac{2}{3}}}\sum_{\ell \geq 1} A(k,\ell)$ and $A' = \sum_{k=1}^{n^{\frac{2}{3}}}\sum_{\ell \geq 1} A'(k,\ell)$, then by Lemma \ref{l:balanced} with probability $1 - O\left( n^{-1}\right)= 1- O\left(\left(\epsilon^3 n\right)^{-1}\right)$, $A = A'$.

Since $\frac{k}{n} = o(1)$ for $k\leq n^{\frac{2}{3}}$, we see by \eqref{e:complexexpectation} in Theorem \ref{t:complexexpectation} that
\begin{align}
\mathbb{E}(A') &= \sum_{k=1}^{n^{\frac{2}{3}}} \sum_{(i,j) \in B_k} \sum_{\ell =1}^{ij-i-j} \mathbb{E}\left(X(i,j,\ell)\right) \nonumber \\
& \lesssim  \sum_{k=1}^{n^{\frac{2}{3}}} \frac{1}{\sqrt{k}}\exp\left(-\delta k + \frac{\epsilon k^2}{4n} \right) \sum_{(i,j) \in B_k} \frac{1}{\sqrt{ij}} \left(\frac{i}{j}\right)^{j-i} \exp\left(-\frac{(i-j)^2}{2n} \right)\sum_{\ell =1}^{ij-i-j} \left(\frac{ck^3}{\ell n^2}\right)^\frac{\ell}{2} \text{exp}\left( \ell \log (1+\epsilon) + \frac{\ell(1+\epsilon)}{n} \right). \label{e:complexcomponents}
\end{align}

Let us first deal with the innermost sum of \eqref{e:complexcomponents}
\[
\sum_{\ell =1}^{ij-i-j} \left(\frac{ck^3}{\ell n^2}\right)^\frac{\ell}{2} \text{exp}\left( \ell \log (1+\epsilon) + \frac{\ell(1+\epsilon)}{n} \right). 
\]
The ratio of consecutive terms in the sum is 
\begin{align*}
 \sqrt{\frac{ck^3}{n^2}}\frac{\ell^{\frac{\ell}{2}}}{(\ell+1)^\frac{\ell+1}{2}}\exp\left(\log(1+\epsilon)+\frac{1+\epsilon}{n}\right),
\end{align*}
which is strictly less than 1 when $\ell$ is large enough compared to $c$. However, for any constant $\ell\geq 1$ the individual terms can be seen to have order
\[
O\left( \frac{k^3}{n^2} \right)^{\frac{\ell}{2}} = O\left(\frac{k^{\frac{3}{2}}}{n}\right)
\]
since $k \leq n^{\frac{2}{3}}$. It follows that
\begin{equation}
   \sum_{\ell =1}^{ij-i-j} \left(\frac{ck^3}{\ell n^2}\right)^\frac{\ell}{2} \text{exp}\left( \ell \log (1+\epsilon) + \frac{\ell(1+\epsilon)}{n} \right) =  O\left(\frac{k^{\frac{3}{2}}}{n}\right). \label{e:complexinner}
\end{equation}
Next, we see that the second sum can be evaluated using Lemma \ref{l:difference} to give
\begin{equation}\label{e:complexouter}
\sum_{(i,j) \in B_k}\frac{1}{\sqrt{ij}} \left(\frac{i}{j}\right)^{j-i} \exp\left(-\frac{(i-j)^2}{2n} \right) = O\left(k^{-\frac{1}{2}} \right).
\end{equation}
Hence, by \eqref{e:complexinner} and \eqref{e:complexouter}, and using that, since $k \leq n^{\frac{2}{3}} = o(\epsilon n)$ we have $\frac{\epsilon k^2 }{4n} = o(\delta k)$, we see that

\begin{align*}
\mathbb{E}(A') =  O\left(\frac{1}{n} \sum_{k=1}^{n^{\frac{2}{3}}} \sqrt{k} e^{-\frac{\delta k}{2}} \right) = O\left(\frac{1}{n} \int_0^{\infty} \sqrt{x} e^{-\frac{\delta x}{2}} \right) =  O\left(\frac{1}{\delta^{\frac{3}{2}} n}\right)=O\left(\frac{1}{\epsilon^3 n}\right).
\end{align*}

Hence, the result follows by Markov's inequality.
\end{proof}

\subsection{Largest and second largest components: proof of Theorem~\ref{t:giant}}\label{s:giant}
In order to show that there is in fact a unique giant component in $G(n,n,p)$ for an appropriate range of $\epsilon$, we follow a relatively standard approach. First, we estimate quite precisely the number of vertices which are contained in small tree or unicyclic components, noting that by the lemmas in the previous section there are whp no large tree or unicyclic components and no small complex components. It follows that whp all the remaining vertices are contained in large complex components, and by a sprinkling argument we are able to show that whp these vertices are in fact all contained in a single component.

Throughout this section, let us consider two quantities related to $\epsilon=\epsilon(n) >0$ satisfying  $\epsilon=o(1)$:  firstly $\delta$ as defined in~\eqref{e:definedelta}, i.e.,
\begin{equation*}
\delta = \epsilon - \log(1+\epsilon) \approx \frac{\epsilon^2}{2},
\end{equation*}
and secondly $\epsilon'$ as in Theorem \ref{t:giant}, which is defined implicitly as the unique positive solution to 
\begin{equation}\label{e:defineepsilon'}
(1-\epsilon')e^{\epsilon'} = (1+\epsilon)e^{-\epsilon}.
\end{equation}
We note that $\epsilon'=\epsilon-\frac{2}{3}\epsilon^2+O(\epsilon^3)$. We also note that $\epsilon'$ has the following natural interpretation in terms of branching processes: If we consider a Po$(1+\epsilon)$ branching process and condition on the event that it does not survive, then it can be shown that this model is distributed as a Po$\left(1-\epsilon'\right)$ branching process. Whenever we use the terms $\epsilon'$ and $\delta$ they refer to these quantities for a fixed $\epsilon$, which should be clear from the context.  

As indicated in Theorem \ref{t:complex}, in the weakly subcritical regime whp there are no complex components. However, for our proof it will be necessary to know more, namely that in this regime we do not expect to have many vertices contained in `large' components. The proof of this fact can be deduced from a standard comparison to a branching process and we defer the details to Appendix \ref{s:Appendix C}.

\begin{theorem}\label{t:subcriticalsmallcomponents}
Let $\epsilon = \epsilon(n) >0$ be such that $\epsilon^3n \rightarrow \infty$ and $\epsilon=o(1)$, and let $p=\frac{1 - \epsilon}{n}$. Then the expected number of vertices in $G(n,n,p)$ in components of order at least $\sqrt{\frac{n}{3\epsilon}}$ is $o\left(\sqrt{\frac{n}{\epsilon}}\right)$.
\end{theorem}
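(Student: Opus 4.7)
The plan is to use the classical comparison between component exploration and a subcritical Galton--Watson branching process. Starting a BFS from any vertex $v \in N_1 \cup N_2$, the number of new neighbours uncovered at each step is stochastically dominated by $\text{Bin}(n,p)$, because at every stage the unexplored portion of the opposite partition class contains at most $n$ vertices, each present as a neighbour independently with probability $p$. Consequently, $|C(v)|$ is stochastically dominated by the total progeny $T$ of a Galton--Watson branching process with $\text{Bin}(n,p)$ offspring, which has mean $np = 1-\epsilon < 1$ and is therefore subcritical.

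Next, I invoke the standard tail bound for the total progeny of a subcritical branching process. Using the hitting-time identity $\mathbb{P}(T=k) = \frac{1}{k}\mathbb{P}(\text{Bin}(kn,p)=k-1)$ together with Stirling's approximation, one obtains
\[
\mathbb{P}(T=k) \leq \frac{C}{k^{3/2}}\, e^{-\tilde\delta k}, \qquad \tilde\delta := -\epsilon - \log(1-\epsilon) \approx \frac{\epsilon^2}{2}.
\]
Summing this over $k \geq k_0$ yields
\[
\mathbb{P}(T \geq k_0) = O\!\left(\frac{e^{-\tilde\delta k_0}}{\tilde\delta\, k_0^{3/2}}\right).
\]

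By linearity of expectation, the expected number of vertices in components of order at least $k_0 := \sqrt{n/(3\epsilon)}$ is at most $2n \cdot \mathbb{P}(T \geq k_0)$. Plugging in: $\tilde\delta\, k_0 = \Theta(\sqrt{n\epsilon^3}) \to \infty$ by hypothesis, and
\[
2n \cdot \mathbb{P}(T \geq k_0) = O\!\left(\frac{n^{1/4}}{\epsilon^{5/4}}\, e^{-\Theta(\sqrt{n\epsilon^3})}\right) = \sqrt{\tfrac{n}{\epsilon}}\cdot O\!\left((n\epsilon^3)^{-1/4}\, e^{-\Theta(\sqrt{n\epsilon^3})}\right),
\]
which is $o(\sqrt{n/\epsilon})$ since the exponential decay of $e^{-\Theta(\sqrt{n\epsilon^3})}$ overwhelms the polynomial prefactor $(n\epsilon^3)^{-1/4}$ as $n\epsilon^3 \to \infty$.

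The only real obstacle is obtaining the tail estimate with the correct exponent $\tilde\delta$: the exponent needs to be sharp (not just of the right order) so that the hypothesis $\epsilon^3 n \to \infty$ suffices to push $\tilde\delta k_0 \to \infty$. An entirely equivalent route is to replace the $\text{Bin}(n,p)$ offspring with $\text{Po}(1-\epsilon)$, whereupon $T$ has an explicit Borel distribution and the same tail bound follows directly from Stirling. The bipartite structure of $G(n,n,p)$ enters the argument only through the initial stochastic domination step; otherwise the argument is insensitive to it.
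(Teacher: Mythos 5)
Your proof is correct and follows essentially the same route as the paper. Both arguments compare the component exploration to a branching-type process with binomial offspring distribution, establish that the total size $T$ satisfies a tail bound of the form $\mathbb{P}(T=k) \lesssim k^{-3/2}\left((1-\epsilon)e^{\epsilon}\right)^{k}$, sum the geometric series, and multiply by $2n$. The only difference is how that tail estimate is obtained: the paper counts rooted subtrees of the infinite $(n+1)$-regular tree (citing Kohayakawa) and multiplies by the appropriate edge probabilities, while you invoke the Otter--Dwass (hitting-time) identity $\mathbb{P}(T=k)=\frac{1}{k}\mathbb{P}(\mathrm{Bin}(kn,p)=k-1)$ and apply Stirling; these are two ways of deriving the same Borel-type asymptotics, so the substance is identical.

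One small correction to your closing remark: you do not actually need the exponent $\tilde\delta=-\epsilon-\log(1-\epsilon)$ to be \emph{sharp}, only to be $\Theta(\epsilon^2)$. The argument runs through as long as $\tilde\delta k_0 \to\infty$ and $1/\tilde\delta = O(\epsilon^{-2})$, both of which hold for any $\tilde\delta$ of the right order. Indeed the paper's proof never uses the exponential factor at all: it bounds $\left((1-\epsilon)e^{\epsilon}\right)^{k_0}\le 1$ and wins purely on the polynomial $nk_0^{-3/2}\epsilon^{-2} = O\!\left(n^{1/4}\epsilon^{-5/4}\right) = \sqrt{n/\epsilon}\cdot O\!\left((n\epsilon^3)^{-1/4}\right)=o\!\left(\sqrt{n/\epsilon}\right)$. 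So the exponential decay $e^{-\Theta(\sqrt{n\epsilon^3})}$ in your final bound, while certainly valid, is not what saves the day; the polynomial prefactor $(n\epsilon^3)^{-1/4}$ already tends to zero on its own.
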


Let us begin then, by estimating the number of vertices contained in small tree or unicyclic components.

\begin{lemma}\label{l:vertices}
Let  $\epsilon = \epsilon(n) >0$ be such that $\epsilon^3n \gg \omega \rightarrow \infty$ and $\epsilon = o(1)$, and let $p=\frac{1+\epsilon}{n}$. 
Let $Y(-1)$ and $Y(0)$ denote the number of vertices in tree and unicyclic components of order at most $n^{\frac{2}{3}}$ in $G(n,n,p)$ respectively. 	
Then with probability $1 - O\left(\omega^{-1}\right)$, we have
\[
Y(0) \leq \frac{4\omega}{\delta},
\]
and
\[
\left|Y(-1) - \frac{2(1-\epsilon')}{1+\epsilon}n\right| \leq \frac{\omega \sqrt{n}}{\sqrt{\epsilon}}. 
\]
	
\end{lemma}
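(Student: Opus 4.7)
For $Y(0)$, I would use a direct expectation bound plus Markov. By Lemma \ref{l:balanced}(ii) applied with $\alpha=\omega$, unbalanced unicyclic components contribute $O(1)$ in expectation (using the bound on $Y(k,0)$ from that lemma's proof) and are absent of order $\ge\omega$ with probability $1-e^{-\Omega(\omega)}$. For balanced pairs, Theorem \ref{t:unicyclicexpectation} combined with Lemma \ref{l:difference} (with $m=1$, as in the derivation of \eqref{e:distunicyclic}) gives $\sum_{(i,j)\in B_k}\mathbb{E}(X(i,j,0))\approx \tfrac{1}{2k}e^{-\delta k}$. Multiplying by $k$ and summing over $k\le n^{2/3}$ yields $\mathbb{E}(Y(0))=O(1/\delta)$, and Markov's inequality at level $4\omega/\delta$ gives $\mathbb{P}(Y(0)>4\omega/\delta)=O(1/\omega)$.

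For $Y(-1)$, the plan is a mean-and-variance argument. Set $\tilde k := C\log(\epsilon^3 n)/\delta$ for a suitable large constant $C$. By Lemma \ref{l:uniform}, non-$\epsilon$-uniform tree components of order at most $n^{2/3}$ are absent with probability $1-o(n^{-1})$, and by Theorem \ref{t:trees}(ii) with $\alpha=\omega$ no tree component has order exceeding $\tilde k$ with probability $1-e^{-\Omega(\omega)}$. Writing $Z_1=\sum_{k=1}^{\tilde k}k\,Z(k)$ for the number of vertices in $\epsilon$-uniform tree components of order at most $\tilde k$, we obtain $Y(-1)=Z_1$ with probability $1-O(1/\omega)$, so it suffices to concentrate $Z_1$ around $\tfrac{2(1-\epsilon')}{1+\epsilon}n$.

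To estimate $\mathbb{E}(Z_1)$, I would apply Theorem \ref{t:treeexpectation}. In the $\epsilon$-uniform regime and for $k\le\tilde k$ with $\epsilon\gg n^{-1/3}$, each of the exponential correction factors $\epsilon ij/n$, $(i^3+j^3)/n^2$, $k/n$, and $(i^4+j^4)/n^3$ is $o(1)$. Reparameterising $d=j-i$ and applying Lemma \ref{l:difference} with $m=3/2$ produces the asymptotic expression for $\sum_{(i,j):i+j=k}\mathbb{E}(X(i,j,-1))$, which when multiplied by $k$ and summed yields a series involving $\sum_k k^{-3/2}e^{-\delta k}$. This series can be evaluated by recognising it as the Stirling asymptotic of the Borel identity $\sum_{k\ge 1}(\mu k)^{k-1}e^{-\mu k}/k!=1$ taken at $\mu=1-\epsilon'$: the defining equation $(1-\epsilon')e^{\epsilon'}=(1+\epsilon)e^{-\epsilon}$ rearranges to $\mu e^{1-\mu}=e^{-\delta}$, so a careful expansion (keeping the $(1+\epsilon)^{-1}$ factor and the Stirling corrections hidden in the $\approx$) yields $\mathbb{E}(Z_1)\approx\tfrac{2(1-\epsilon')}{1+\epsilon}n$.

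Finally, Lemma \ref{l:Var} gives $\mathrm{Var}(Z_1)=O(n/\epsilon)$; its hypothesis $3\epsilon\tilde k^2/n<1$ holds because $\epsilon\tilde k^2/n=O(\log^2(\epsilon^3 n)/(\epsilon^3 n))=o(1)$ under $\omega\ll\epsilon^3 n$. Chebyshev's inequality then gives $\mathbb{P}(|Z_1-\mathbb{E}(Z_1)|>\tfrac12\omega\sqrt{n/\epsilon})=O(\omega^{-2})$, and combining with the earlier reductions produces the claimed deviation bound for $Y(-1)$ with probability $1-O(1/\omega)$. The main obstacle I anticipate is the precise evaluation of $\mathbb{E}(Z_1)$ to accuracy $o(\sqrt{n/\epsilon})$: since $\tfrac{2(1-\epsilon')}{1+\epsilon}n=2n+O(\epsilon n)$ while the allowed error $\omega\sqrt{n/\epsilon}$ can be much smaller than $\epsilon n$, the $O(\epsilon)$-level corrections in the Borel-Stirling comparison must be tracked carefully, using the full form of Theorem \ref{t:treeexpectation} rather than just its leading-order part, and the prefactor $(1-\epsilon')/(1+\epsilon)$ must be extracted cleanly from the functional relation defining $\epsilon'$.
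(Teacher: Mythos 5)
Your treatment of $Y(0)$ is essentially the same as the paper's: split into balanced and unbalanced $(i,j)$, bound the balanced contribution via Theorem \ref{t:unicyclicexpectation} and Lemma \ref{l:difference} to get $O(1/\delta)$, bound the unbalanced contribution to $O(1)$ (the paper does this by a naive bound on $C(i,j,0)$ rather than citing Lemma \ref{l:balanced}, but either works), then apply Markov.

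For $Y(-1)$, your reduction steps (Lemma \ref{l:uniform} for $\epsilon$-uniformity, Theorem \ref{t:trees}\eqref{i:treecomp2} for truncation, Lemma \ref{l:Var} plus Chebyshev for concentration) all match the paper's scaffolding. But the heart of the proof --- evaluating $\mathbb{E}(Z_1)$ to within $o(\sqrt{n/\epsilon})$ --- is where your route diverges, and the obstacle you flag is a genuine gap, not merely a detail. A direct Borel--Stirling evaluation of $\sum_k k^{-3/2}e^{-\delta k}$ would need \emph{relative} accuracy $o(1/\sqrt{n\epsilon})$ in a quantity whose leading term is $\Theta(n)$, but Theorem \ref{t:treeexpectation} and Lemma \ref{l:difference} only supply $(1+o(1))$-multiplicative approximations, so their cumulative error after summation is $o(n)$, not $o(\sqrt{n/\epsilon})$; the Stirling corrections, the lower-order terms in the exponent of \eqref{e:treeexpectation}, and the error in Lemma \ref{l:difference} are all of the wrong order to be absorbed this way. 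The paper avoids this entirely by a \emph{dual-model} comparison: it introduces the subcritical probability $p'=\frac{1-\epsilon'}{n}$ and the quantity $Z_1'$ (vertices in $\epsilon$-uniform trees of order $\le\tilde k$ in $G(n,n,p')$), and computes the per-$k$ ratio
\[
\frac{\mathbb{E}(Z_1(k))}{\mathbb{E}(Z_1'(k))}=\frac{1-\epsilon'}{1+\epsilon}\,e^{O(\epsilon k^2/n)}
\]
exactly, because the defining relation $(1-\epsilon')e^{\epsilon'}=(1+\epsilon)e^{-\epsilon}$ makes the $k$-dependent geometric factors cancel. The base quantity $\mathbb{E}(Z_1')=2n-o(\sqrt{n/\epsilon})$ is then obtained by an elementary argument (Theorem \ref{t:subcriticalsmallcomponents} plus separate bounds on unicyclic, complex, and non-uniform contributions in the subcritical model), and the aggregated $O(\epsilon k^2/n)$ corrections are bounded by a moment estimate $\sum_k k^3\,\mathbb{E}(\hat Y(k))=o((n/\epsilon)^{3/2})$. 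That ratio trick is the missing idea.

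A smaller issue: you set $\tilde k=C\log(\epsilon^3 n)/\delta$ but invoke Theorem \ref{t:trees}\eqref{i:treecomp2} with $\alpha=\omega$. When $\omega\gg\log(\epsilon^3 n)$, the theorem's threshold $\frac{1}{\delta}(\log(\epsilon^3 n)-\tfrac52\log\log(\epsilon^3 n)+\omega)$ exceeds your $\tilde k$, so trees of intermediate order are not excluded; you should instead take $\alpha\approx(C-1)\log(\epsilon^3 n)$, which still yields failure probability $(\epsilon^3 n)^{-\Omega(1)}=O(\omega^{-1})$. The paper avoids this mismatch by taking $\tilde k=\sqrt{n/(3\epsilon)}$ with $\alpha=\sqrt{\epsilon^3 n/16}$, so that the truncation and the hypothesis of Lemma \ref{l:Var} align.
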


\begin{proof}
 First, we bound $Y(0)$. As before, we let 
 \[
 B_k = \left\{ (i,j) \in \mathbb{N}^2 \colon i+j=k \text{ and } \frac{1}{2} \leq \frac{i}{j} \leq 2 \right\} \,\,\, \text{ and }  \,\,\,
 U_k = \left\{(i,j) \in \mathbb{N}^2 \colon i+j=k\right\} \setminus B_k.
 \] 
 Then we can split the calculation of $\mathbb{E}(Y(0))$ into two parts
 \begin{align*}
 \mathbb{E}(Y(0)) &=  \sum_{k\leq n^{\frac{2}{3}}}k\sum_{(i,j) \in B_k}\mathbb{E}\left(X(i,j,0)) \right)+  \sum_{k\leq n^{\frac{2}{3}}}k\sum_{(i,j) \in U_k}\mathbb{E}\left(X(i,j,0)\right):= S_1 + S_2.
 \end{align*}
 
Since if $i+j = k$, then $\frac{\epsilon ij}{n} \leq \frac{\epsilon k^2}{4n} = o(\delta k)$ for $k \leq n^{\frac{2}{3}}$, it follows from \eqref{e:unicyclicexpectation} and Lemma \ref{l:difference} that
 \begin{align*}
     S_1 \lesssim \frac{1}{4\sqrt{2\pi}}\sum_{k \leq n^{\frac{2}{3}}}k^{\frac{3}{2}}e^{-\delta k} \sum_{(i,j) \in B_k}\frac{1}{ij}\left(\frac{i}{j}\right)^{j-i} \text{exp}\left( - \frac{(i-j)^2}{2n} + \frac{\epsilon ij}{n} \right)\leq \frac{1}{2} \sum_{k\leq n^{\frac{2}{3}}}e^{-\frac{\delta k}{2} } \leq \frac{1}{2}\int_{0}^{\infty}e^{-\frac{\delta x}{2}}dx\leq \frac{1}{\delta}.
 \end{align*}
 
 Furthermore, using the very naive bound that $C(i,j,0) \leq ij C(i,j-1) \leq i^jj^i$, we can calculate as in \eqref{e:unicyclicexpectation}
 \begin{align*}
     S_2 &\leq \sum_{k\leq n^{\frac{2}{3}}}k\sum_{(i,j) \in U_k}\mathbb{E}(X(i,j,0))\leq \sum_{k\leq n^{\frac{2}{3}}}k\sum_{(i,j) \in U_k} \binom{n}{i}\binom{n}{j} i^j j^i p^{k} (1-p)^{kn -ij -k} \\
     &\approx \frac{1}{2 \pi}\sum_{k\leq n^{\frac{2}{3}}} k e^{-\delta k}\sum_{(i,j) \in U_k} \frac{1}{(ij)^{\frac{1}{2}}} \left(\frac{i}{j}\right)^{j-i} \exp\left(- \frac{(i-j)^2}{2n} + \frac{\epsilon ij}{n} \right)\\
     &\leq \sum_{k\leq n^{\frac{2}{3}}} ke^{-\frac{\delta k}{2}} \left(\frac{1}{2}\right)^{\frac{k}{3}}\sum_{(i,j) \in U_k} \frac{1}{(ij)^{\frac{1}{2}}}\leq \sum_{k\leq n^{\frac{2}{3}}} k^{\frac{3}{2}} \left(\frac{1}{2} \right)^{\frac{k}{3}}= O(1).
 \end{align*}
 The first part of the lemma then follows by Markov's inequality.
 
So, let us consider the bound on $Y(-1)$. Firstly, we note that by part \eqref{i:treecomp2} of Theorem \ref{t:trees} with $\alpha(n)=\sqrt{\frac{\epsilon^3n}{16}}$, with probability $1-o\left(e^{-\sqrt{\frac{\epsilon^3n}{16}}}\right) = 1- O\left(\omega^{-1}\right)$ there are no tree components in $G(n,n,p)$ of order at least $\sqrt{\frac{n}{3\epsilon}}$, and by Lemma \ref{l:uniform} with probability $1-o\left(n^{-1}\right) = 1- O\left(\omega^{-1}\right)$ there are no non-$\epsilon$-uniform tree components in $G(n,n,p)$ of order at most $n^\frac{2}{3}$. Hence, with probability $1 - O\left(\omega^{-1}\right)$, $Y(-1) = Z_1$ where $Z_1$ is, as in Lemma \ref{l:Var}, the number of vertices in $\epsilon$-uniform tree components in $G(n,n,p)$ of order at most $\tilde{k} = \sqrt{\frac{n}{3\epsilon}}$. 

Next, following a technique of Bollob\'{a}s \cite[Theorem 6.6]{BollobasBook}, we consider the model $G(n,n,p')$ where $p'=\frac{1-\epsilon'}{n}$. Let us write $Y'(-1)$ and $Y'(0)$ for the number of vertices in tree and unicyclic components in $G(n,n,p')$ of order at most $n^{\frac{2}{3}}$ respectively, and similarly $Z_1'$ for the number of vertices in $\epsilon$-uniform tree components in $G(n,n,p')$ of order at most $\sqrt{\frac{n}{3\epsilon}}$.

We will show that almost every vertex in $G(n,n,p')$ lies in $\epsilon$-uniform tree components of order at most $\sqrt{\frac{n}{3\epsilon}}$, and we are able to calculate the ratio $\mathbb{E}(Z_1) / \mathbb{E}(Z_1')$ quite precisely. Combining this with the bound on the variance of $Z_1$ from Lemma \ref{l:Var} we are able to deduce the second part of the lemma.

Indeed, by Theorem \ref{t:subcriticalsmallcomponents} the expected number of vertices in components of order greater than $\sqrt{\frac{n}{3\epsilon}}$ in $G(n,n,p')$ is $o\left(\sqrt{\frac{n}{\epsilon}}\right)$. Furthermore, as we demonstrate
below, similar calculations to the first part of the lemma will prove  that the expected number of vertices in unicyclic and complex components of order at most $n^{\frac{2}{3}}$ in $G(n,n,p')$ is $o\left(\sqrt{\frac{n}{\epsilon}}\right)$.

Indeed, if we let $Y'(\geq 1)$ be the number of vertices in complex components of order at most $n^{\frac{2}{3}}$ in $G(n,n,p')$, then
\begin{align*}
\mathbb{E}\left(Y'( \geq 1)\right) = \sum_{k\leq n^{\frac{2}{3}}}k\sum_{(i,j) \in B_k} \sum_{\ell = 1}^{ij-i-j}\mathbb{E}\left(X'(i,j,\ell)\right) +  \sum_{k\leq n^{\frac{2}{3}}}k\sum_{(i,j) \in U_k}\sum_{\ell = 1}^{ij-i-j}\mathbb{E}\left(X'(i,j,\ell)\right):= S'_1 + S'_2,
\end{align*}
where $X'(i,j,\ell)$ is the number of components in $G(n,n,p')$ with $i$ vertices in $N_1$, $j$ vertices in $N_2$, $i+j+\ell$ edges.

One can bound $S'_2$ in a similar fashion as with $S_2$, since the exponentially small term $\left(\frac{i}{j} \right)^{j-i}$ is the dominating term. For $S'_1$ we use \eqref{e:complexinitial}, and as in Theorem \ref{t:complex} we can argue
\begin{align*}
S'_1 &\leq \sum_{k\leq n^{\frac{2}{3}}}k\sum_{(i,j) \in B_k} \sum_{\ell = 1}^{ij-i-j}\sqrt{k} e^{-k} \left(\frac{i}{j}\right)^{j-i}\left(\frac{ck^3}{\ell n^2}\right)^\frac{\ell}{2}\frac{(n)_i(n)_j}{n^k} (1-\epsilon')^{k+\ell}\left(1-\frac{1-\epsilon'}{n}\right)^{kn-ij-k-\ell}\\
&\leq \sum_{k\leq n^{\frac{2}{3}}}k^{\frac{3}{2}} (1-\epsilon')^{k} \sum_{(i,j) \in B_k} \sum_{\ell = 1}^{ij-i-j} \left(\frac{ck^3}{\ell n^2}\right)^\frac{\ell}{2} = O\left( \frac{1}{n} \sum_{k\leq n^{\frac{2}{3}}}k^{4} e^{-\epsilon' k} \right)= O\left( \frac{1}{\epsilon^5 n} \right)=o\left(\sqrt{\frac{n}{\epsilon}}\right),
\end{align*}
as long as $\epsilon^{3}n \rightarrow \infty$.
Furthermore, arguments similar to those used to bound $\mathbb{E}(Y(0))$ immediately imply that the expected number of vertices in small unicyclic components is at most $O\left( (\epsilon')^{-2}\right)  =  o \left(\sqrt{\frac{n}{\epsilon}}\right)$. Finally, the expected number of vertices in non-$\epsilon$-uniform components of order at most $n^{\frac{2}{3}}$ in $G(n,n,p')$ is $o(1)$, as follows from the proof of Lemma \ref{l:uniform}. 
It follows that 
\[
\mathbb{E}\left(Z'_1\right) = 2n - o\left( \sqrt{\frac{n}{\epsilon}}\right).
\]

Let us write $Z_1(k)$ and $Z_1'(k)$ for the number of vertices in $\epsilon$-uniform tree components of order $k \leq \sqrt{\frac{n}{3 \epsilon}}$ in $G(n,n,p)$ and $G(n,n,p')$ respectively, and let us consider the ratio
     \begin{align*}
         \frac{\mathbb{E}\left(Z_1(k)\right)}{\mathbb{E}\left(Z'_1(k)\right)}&=\frac{\sum_{i+j=k}\binom{n}{i}\binom{n}{j}i^{j-1}j^{i-1}\left(\frac{1+\epsilon}{n}\right)^{k-1}\left(1-\frac{1+\epsilon}{n}\right)^{kn-ij-k+1}}{\sum_{i+j=k}\binom{n}{i}\binom{n}{j}i^{j-1}j^{i-1}\left(\frac{1-\epsilon'}{n}\right)^{k-1}\left(1-\frac{1-\epsilon'}{n}\right)^{kn-ij-k+1}},
     \end{align*}
     where the sums run over the $\epsilon$-uniform pairs $(i,j)$.
     
Note that,
 \begin{align*}
      \left(1-\frac{1+\epsilon}{n}\right)^{kn-ij-k+1} =\left(\frac{n-1}{n}\right)^{kn-ij-k+1}\left(1-\frac{\epsilon}{n-1}\right)^{kn-ij-k+1}=\left(\frac{n-1}{n}\right)^{kn-ij-k+1} \exp \left(-\epsilon k + \frac{\epsilon ij}{n} + o(1) \right),
\end{align*}
and similarly
\[
\left(1-\frac{1-\epsilon'}{n}\right)^{kn-ij-k+1} =\left(\frac{n-1}{n}\right)^{kn-ij-k+1} \exp \left(\epsilon' k - \frac{\epsilon' ij}{n} + o(1) \right).
\]
Hence, we have 
\begin{align*}
         \frac{\mathbb{E}(Z(k))}{\mathbb{E}(Z'(k))}&=\frac{1-\epsilon'}{1+\epsilon}\left(\frac{(1+\epsilon)e^{-\epsilon}}{(1-\epsilon')e^{\epsilon'}}\right)^k\frac{e^{O\left(\frac{\epsilon k^2}{n}\right)}}{e^{-O\left(\frac{\epsilon' k^2}{n}\right)}}\frac{ \sum_{i+j=k}\binom{n}{i}\binom{n}{j}i^{j-1}j^{i-1}\left(\frac{n-1}{n}\right)^{kn-ij-k+1}}{\sum_{i+j=k}\binom{n}{i}\binom{n}{j}i^{j-1}j^{i-1}\left(\frac{n-1}{n}\right)^{kn-ij-k+1}}=\frac{1-\epsilon'}{1+\epsilon}e^{O\left(\frac{\epsilon k^2}{n}\right)},
     \end{align*} 
  since the second factor is equal to $1$ by the definition of $\epsilon'$ in \eqref{e:defineepsilon'}. 
  
 So, we see that 
 \[\frac{\mathbb{E}\left(Z_1(k)\right)}{\mathbb{E}\left(Z'_1(k)\right)}=\frac{1-\epsilon'}{1+\epsilon}+O\left(\frac{\epsilon k^2}{n}\right),
 \]
 or, in other words,
 \[
 \mathbb{E}\left(Z_1(k)\right) = \frac{1-\epsilon'}{1+\epsilon}\mathbb{E}\left(Z'_1(k)\right)+O\left(\frac{\epsilon k^2}{n}\right)\mathbb{E}\left(Z'_1(k)\right).
 \]
 
 Hence, by writing $\mathbb{E}\left(Z'(k)\right) = k\mathbb{E}\left(\hat{Y}(k)\right)$  where $\hat{Y}(k)$ is the number of $\epsilon$-uniform tree components of order $k$ in $G(n,n,p')$ we see that
    \begin{align}
        \mathbb{E}(Z_1)&= \sum_{k=1}^{\sqrt{\frac{n}{3 \epsilon}}} \mathbb{E}(Z_1(k))\nonumber= \frac{1-\epsilon'}{1+\epsilon} \sum_{k=1}^{\sqrt{\frac{n}{3 \epsilon}}} \mathbb{E}\left(Z'_1(k)\right) + O\left(\frac{\epsilon}{n}\right) \sum_{k=1}^{\sqrt{\frac{n}{3 \epsilon}}} k^3 \mathbb{E}\left(\hat{Y}(k)\right)\nonumber = \frac{1-\epsilon'}{1+\epsilon} \mathbb{E}\left(Z'_1\right) + O\left(\frac{\epsilon}{n}\right) \sum_{k=1}^{\sqrt{\frac{n}{3 \epsilon}}} k^3 \mathbb{E}\left(\hat{Y}(k)\right) \nonumber\\&=  \frac{1-\epsilon'}{1+\epsilon}\left(2n - o \left(\sqrt{\frac{n}{\epsilon}}\right)\right)+O\left(\frac{\epsilon}{n}\right)\sum_{k=1}^{\sqrt{\frac{n}{3 \epsilon}}} k^3\mathbb{E}\left(\hat{Y}(k)\right). \label{e:expectationofZ}
    \end{align}
  
     The final sum can be bounded by the corresponding sum over all possible tree components, $\epsilon$-uniform or not. That is, writing $X'(i,j,-1)$ for the number of tree components with $i$ vertices in one partition class and $j$ vertices in the other in $G(n,n,p')$ and noting that $\delta= \epsilon - \log(1+\epsilon) = \log(1-\epsilon') - \epsilon'$, we can bound in a similar manner to \eqref{e:treeexpectation}
     \begin{align*}
         \sum_{k=1}^  {\sqrt{\frac{n}{3 \epsilon}}}k^3\mathbb{E}\left(\hat{Y}(k))\right)&\leq \sum_{k=1}^  {\sqrt{\frac{n}{3 \epsilon}}} k^3 \sum_{i+j=k} \mathbb{E}\left(X'(i,j,-1)\right)\leq \sum_{k=1}^  {\sqrt{\frac{n}{3 \epsilon}}}k^3\sum_{i+j=k}\binom{n}{i}\binom{n}{j}i^{j-1}j^{i-1}p'^{k-1}(1-p')^{kn-ij-k+1}\\ 
         &\lesssim \sum_{k=1}^  {\sqrt{\frac{n}{3 \epsilon}}}k^3n e^{-\delta k}\sum_{i+j=k}\frac{1}{(ij)^{\frac{3}{2}}}\left(\frac{i}{j}\right)^{j-i}\exp\left(-\frac{(i-j)^2}{2n}\right):= S.
     \end{align*}
     
     To bound $S$, we split into two cases. Let us take $s$ such that $s = \delta^{-\frac{3}{7}}$, and first consider the case when $k\leq s$, where
     \begin{align*}
           S_1 &:= n\sum_{k=1}^{s}k^3 e^{-\delta k}\sum_{i+j=k}\frac{1}{(ij)^{\frac{3}{2}}}\left(\frac{i}{j}\right)^{j-i}\exp\left(-\frac{(i-j)^2}{2n}\right)\leq n\sum_{k=1}^{s}k^3 \sum_{i+j=k}\frac{1}{k^\frac{3}{2}}\leq n\sum_{k=1}^{s} k^\frac{5}{2}\leq  ns^\frac{7}{2}=O\left(\frac{n}{\delta^{\frac{3}{2}}}\right)= o\left(\left(\frac{n}{\epsilon}\right)^{\frac{3}{2}}\right).
     \end{align*}
     
     Conversely, when $k\geq s$, we see that, by Lemma \ref{l:difference} 
     \begin{align*}
        S_2&:=n \sum_{k=s}^{\sqrt{\frac{n}{3 \epsilon}}}k^3 e^{-\delta k}\sum_{i+j=k}\frac{1}{(ij)^{\frac{3}{2}}}\left(\frac{i}{j}\right)^{j-i}\exp\left(-\frac{(i-j)^2}{2n}\right)=O\left( n \sum_{k=s}^{\sqrt{\frac{n}{3 \epsilon}}}\sqrt{k}e^{-\delta k}\right)\\
        &=O\left( n\int_{0}^{\infty}\sqrt{x}e^{-\delta x}dx\right)=O\left(\frac{n}{\delta^{\frac{3}{2}}}\right)= o\left(\left(\frac{n}{\epsilon}\right)^{\frac{3}{2}}\right).
     \end{align*}

      Hence, $S = S_1 + S_2 =  o\left(\left(\frac{n}{\epsilon}\right)^{\frac{3}{2}}\right)$ and so by \eqref{e:expectationofZ}
      \[
      \mathbb{E}(Z_1)=\frac{1-\epsilon'}{1+\epsilon}\left(2n-o\left(\sqrt{\frac{n}{\epsilon}}\right)\right)+O\left(\frac{\epsilon}{n}\right)o\left(\left(\frac{n}{\epsilon}\right)^{\frac{3}{2}}\right)=\frac{2(1-\epsilon')}{1+\epsilon}n+ o\left(\sqrt{\frac{n}{\epsilon}}\right).
      \]
      
      Finally, by Lemma \ref{l:Var} with $\tilde{k} = \sqrt{\frac{n}{3\epsilon}}$, which can be seen to satisfy $\frac{3\epsilon \Tilde{k}^2}{n} \leq 1$, we conclude that $\text{Var}(Z_1) = O\left(\frac{n}{\epsilon}\right)$. Hence, by Chebyshev's inequality, $\left|Z_1-\frac{2(1-\epsilon')}{1+\epsilon}n\right|\leq \omega n^{\frac{1}{2}}\epsilon^{-\frac{1}{2}}$ with probability $1-O\left(\omega^{-1}\right)$. Thus, with probability $1 - O\left(\omega^{-1}\right)$,
        $$\left|Y(-1)-\frac{2(1-\epsilon')}{1+\epsilon}n\right|\leq \omega n^{\frac{1}{2}}\epsilon^{-\frac{1}{2}}.$$
\end{proof} 

Using Lemma \ref{l:vertices}, we can give a good bound on the number of vertices which are contained in components of order at least $n^{\frac{2}{3}}$ in $G(n,n,p)$. Then, using a sprinkling argument we can deduce that whp all these vertices are contained in a unique `giant' component, and determine asymptotically its order.

\begin{proof}[Proof of Theorem~\ref{t:giant}]
Let $\mathscr{L}(G)$ denote the set of vertices lying in components of $G$ of order larger than $n^\frac{2}{3}$, which we call \emph{large}. We first estimate quite precisely the size of $\mathscr{L}(G(n,n,p))$ and then show that there is only one large component in $G(n,n,p)$.

Indeed, if we let $\omega =\frac{\left(\epsilon^3 n\right)^\frac{1}{6}}{100}$, then by part \eqref{i:complexcomp2} of Theorem \ref{t:complex}, with probability $1 - O\left(\left(\epsilon^3 n\right)^{-1}\right) \geq 1 -O\left(\omega^{-1}\right)$ there are no small complex components in $G(n,n,p)$. 

Now, by Lemma \ref{l:vertices}, with probability $1-O\left(\omega^{-1}\right)$ the number of vertices in small unicyclic components is at most $\frac{\omega}{\delta} \ll n^{\frac{2}{3}}$ and the number of vertices in small tree components $Y(-1)$ is such that
 \[
 \frac{2(1-\epsilon')}{1+\epsilon}n - \frac{n^{\frac{2}{3}}}{100} = \frac{2(1-\epsilon')}{1+\epsilon}n - \frac{\omega\sqrt{n}}{\sqrt{\epsilon}} \leq Y(-1) \leq \frac{2(1-\epsilon')}{1+\epsilon}n + \frac{\omega\sqrt{n}}{\sqrt{\epsilon}} = \frac{2(1-\epsilon')}{1+\epsilon}n + \frac{n^{\frac{2}{3}}}{100}.
 \]
  
It follows that with probability $1-O\left(\omega^{-1}\right)$
  \[ \left||\mathscr{L}(G(n,n,p))|-2n\left(1-\frac{1-\epsilon'}{1+\epsilon}\right)\right|\leq \frac{n^\frac{2}{3}}{50}.
 \]
 Note that $\epsilon' = \epsilon + O(\epsilon^2)$, and so $|\mathscr{L}(G(n,n,p))| \approx 4 \epsilon n$.
 
In order to show the existence of a unique large component, we use a sprinkling argument. Let $$p_1 = p - \frac{n^{-\frac{4}{3}}}{10}\,\,\, \text{and }\,\,\, p_2 = \frac{p-p_1}{1-p_1} \geq \frac{n^{-\frac{4}{3}}}{20},$$ and let us write $p_1 = \frac{1+\epsilon_1}{n}$, where $\epsilon_1 = \epsilon - \frac{1}{10n^{\frac{1}{3}}}$. A standard argument allows us to couple an independent pair $(G(n,n,p_1),G(n,n,p_2))$ with $G(n,n,p)$ so that $G(n,n,p_1)\cup G(n,n,p_2) = G(n,n,p)$. 

It is clear that $\omega =\frac{\left(\epsilon^3 n\right)^\frac{1}{6}}{100} \approx \frac{\left(\epsilon_1^3 n\right)^\frac{1}{6}}{100}$. Hence, the same argument as before shows that with probability $1-O\left(\omega^{-1}\right)$
\[ 
\left||\mathscr{L}(G(n,n,p_1))|-2n\left(1-\frac{1-\epsilon_1'}{1+\epsilon_1}\right)\right|\leq \frac{n^\frac{2}{3}}{50},
\]
where $\epsilon_1'$ is defined as the solution to $(1-\epsilon_1')e^{\epsilon_1'} = (1+\epsilon_1)e^{-\epsilon_1}$.
 
Next, some basic analysis tells us that these bounds for $|\mathscr{L}(G(n,n,p))|$ and $|\mathscr{L}(G(n,n,p_1))|$ are not far apart. More precisely, we claim that
 \[
 \frac{1-\epsilon'_1}{1+\epsilon_1}-\frac{1-\epsilon'}{1+\epsilon} \leq \frac{2}{5n^\frac{1}{3}}.
 \]

Indeed, consider the function $y(x)$ where $y$ is given as the unique positive solution to $(1-y)e^y=(1+x)e^{-x}$. Then, by the derivative of implicit functions formula, 
\begin{equation}\label{e:implicitderivative}\frac{dy}{dx}= \frac{x}{y}e^{-x-y}= \frac{x(1-y)}{y(1+x)},
\end{equation}
where the last equality follows from the fact that $(1-y)e^y=(1+x)e^{-x}$.

Thus, by the mean value theorem there is a $\psi\in [\epsilon_1,\epsilon]$ such that
    \[\epsilon'-\epsilon_1'=y(\epsilon)-y(\epsilon_1)=y'(\psi)(\epsilon-\epsilon_1)<2(\epsilon-\epsilon_1),
    \]
    since \[y'(\psi) = \frac{\psi(1-y(\psi))}{y(\psi)(1+\psi)} <\frac{\psi}{y(\psi)}=\frac{\psi}{\psi+O(\psi^2)}\leq 2.\]
    
Hence, it follows that
\begin{align*}
\frac{1-\epsilon'_1}{1+\epsilon_1}-\frac{1-\epsilon'}{1+\epsilon} &= \frac{\epsilon  - \epsilon_1 + \epsilon' - \epsilon_1' - \epsilon_1' \epsilon +\epsilon'\epsilon_1}{(1+\epsilon)(1+\epsilon_1)} \leq 3(\epsilon - \epsilon_1) +\epsilon'\epsilon_1- \epsilon_1' \epsilon \leq 3(\epsilon - \epsilon_1) + \epsilon(\epsilon' - \epsilon_1') \leq (3+2\epsilon) (\epsilon - \epsilon_1) \leq \frac{4}{10 n^{\frac{1}{3}}}.
\end{align*}

Hence, with probability $1-O\left(\omega^{-1}\right)$
 \begin{align*}
      |\mathscr{L}(G(n,n,p))|-|\mathscr{L}(G(n,n,p_1))|\leq  \frac{1-\epsilon'_1}{1+\epsilon_1}2n-\frac{1-\epsilon'}{1+\epsilon}2n +\frac{n^\frac{2}{3}}{25}\leq \frac{4}{5}n^\frac{2}{3}+\frac{n^\frac{2}{3}}{25}< n^\frac{2}{3}.
 \end{align*}
 
Since, by our coupling, $G(n,n,p_1) \subseteq G(n,n,p)$, it follows that in this event every large component of $G(n,n,p)$ contains a large component of $G(n,n,p_1)$. Hence, in order to show that there is a unique large component in $G(n,n,p)$ it is sufficient to show that all the large components in $G(n,n,p_1)$ are contained in a single component in $G(n,n,p)$.

 By Lemma \ref{l:balanced}, with probability $1 - O\left(n^{-1}\right) \geq 1 -O\left( \omega^{-1} \right)$, each component of order larger than $n^{\frac{2}{3}}$ in $G(n,n,p_1)$ is balanced, and so we can partition the vertices in $\mathscr{L}(G(n,n,p_1))$ into subsets $V_1,W_1,V_2,W_2,\ldots V_m,W_m$ such that $\frac{n^\frac{2}{3}}{3}\leq |V_i| , |W_i|\leq n^\frac{2}{3}$ and $V_i$ and $W_i$ lie in the same component in $G(n,n,p_1)$ for each $i$, say in a greedy manner.
 
 Now, let us consider the edges in $G(n,n,p_2)$. Either all vertices in $\mathscr{L}(G(n,n,p_1))$ are contained in one component of $G(n,n,p_1) \cup G(n,n,p_2)$, or there is a family $\mathcal{A}=\left\{(V_{i_1},W_{i_1}),(V_{i_2},W_{i_2}),\dots,(V_{i_{r}},W_{i_r})\right\}$, where $1\leq r\leq \frac{m}{2}$ such that there is no edge in $G(n,n,p_2)$ with one end point in $(V_i,W_i)\in \mathcal{A}$ and the other in $(V_j,W_j)\notin \mathcal{A}$ (see Figure \ref{f:noedges}). Note that, for any such family $\mathcal{A}$, there are at least $\frac{2}{9}r(m-r)n^{\frac{4}{3}}$ non-edges in $G(n,n,p_2)$ with one end point in $(V_i,W_i)\in \mathcal{A}$ and the other in $(V_j,W_j)\notin \mathcal{A}$.
 
 Hence, the probability that such a family $\mathcal{A}$ exists is bounded by 
 \begin{align*}
     \sum_{r=1}^{\frac{m}{2}} \binom{m}{r} (1-p_2)^{\frac{2}{9}r(m-r)n^{\frac{4}{3}}}\leq \sum_{r=1}^{\frac{m}{2}}\left(\frac{em}{r}\right)^r \left(1-\frac{n^{-\frac{4}{3}}}{20}\right)^{\frac{2}{9}r(m-r)n^{\frac{4}{3}}}\leq \sum_{r=1}^{\frac{m}{2}}\left(\frac{em}{r}e^{-\frac{m-r}{100}}\right)^r\leq \sum_{r=1}^{\frac{m}{2}}\left(\frac{em}{r}e^{-\frac{m}{200}}\right)^r.
 \end{align*}
 However, since $|\mathscr{L}(G(n,n,p_1))| \approx 4\epsilon n$, it follows that $m = \Theta\left(\epsilon n^{\frac{1}{3}}\right) = \Theta(\omega^2)$, 
  and hence 
 \[
 \sum_{r=1}^{\frac{m}{2}}\left(\frac{em}{r}e^{-\frac{m}{200}}\right)^r = e^{-\Omega(m)} =  O\left(\omega^{-1}\right).
 \]
It follows that, with probability $1- O\left(\omega^{-1}\right)$, $\mathscr{L}(G(n,n,p))$ consists of just the vertices in the largest component $L_1\left(G(n,n,p)\right)$, and so the claim follows.

For the last part, since with probability $1-O\left(\omega^{-1}\right)$,
 \[
 |L_1| \approx \frac{2(\epsilon + \epsilon')}{1+\epsilon}n \approx 4 \epsilon n,
 \] 
 and by Theorems \ref{t:trees} and \ref{t:unicyclic}, with probability $1-O\left(\omega^{-1}\right)$, there are no large tree or unicyclic components, it suffices to show that with sufficiently small probability there are no complex components in $G(n,n,p)$ of order around $4 \epsilon n$ which are very unbalanced. We shall bound from above the expected number of complex components $C$ of $G(n,n,p)$ with order in the interval $\left[3\epsilon n,5\epsilon n\right]$, which have $|C\cap N_1| \geq (1+2\sqrt{\epsilon})|C\cap N_2|$ or $|C\cap N_2|\geq (1+2\sqrt{\epsilon})|C\cap N_1|$. As in Lemma \ref{l:balanced} we can bound the expected number of such components by the expected number of trees with $2$ extra edges, otherwise disconnected from the rest of the graph, which can be bounded as in Lemma \ref{l:balanced}
\begin{align*}
    &\sum_{k=3\epsilon n}^{5\epsilon n}2\sum_{\substack{i+j=k,\\j\geq (1+2\sqrt{\epsilon})i}}\binom{n}{i}\binom{n}{j}i^{j+1}j^{i+1}p^{k+1}(1-p)^{kn-2ij}\leq \frac{2}{n} \sum_{k=3\epsilon n}^{5\epsilon n}\sum_{\substack{i+j=k,\\j\geq (1+2\sqrt{\epsilon})i}}(ij)^\frac{1}{2}\left(\frac{i}{j}\right)^{j-i}\exp\left(\frac{(1+2\epsilon)ij}{n} + O\left(\frac{k}{n}\right)\right)\\
 &\lesssim \frac{2}{n}\sum_{k=3\epsilon n}^{5\epsilon n} \left(\frac{1}{1+2\sqrt{\epsilon}}\right)^{\frac{\sqrt{\epsilon}}{1+\sqrt{\epsilon}}k}\exp\left(\frac{(1+2\epsilon)k^2}{4n} \right) \sum_{\substack{i+j=k,\\j\geq (1+2\sqrt{\epsilon})i}}  (ij)^\frac{1}{2}\\ &\leq \frac{2}{n}\sum_{k=3\epsilon n}^{5\epsilon n} k^2 \exp \left( -\frac{2 \epsilon}{(1+2\sqrt{\epsilon})(1+\sqrt{\epsilon})} k + \frac{5(1+2\epsilon) \epsilon }{4}k \right)\leq \frac{2}{n}\sum_{k=3\epsilon n}^{5\epsilon n} k^2 e^{-\Omega( \epsilon k)}= O\left( \frac{1}{\epsilon^3 n} \right)= O\left(\frac{1}{\omega}\right),
\end{align*}
where we used that $\frac{i}{j}\leq \frac{1}{1+2\sqrt{\epsilon}}$ and $j-i\geq \frac{\sqrt{\epsilon}}{1+\sqrt{\epsilon}}k$.
Hence, the result follows from Markov's inequality.
\end{proof} 
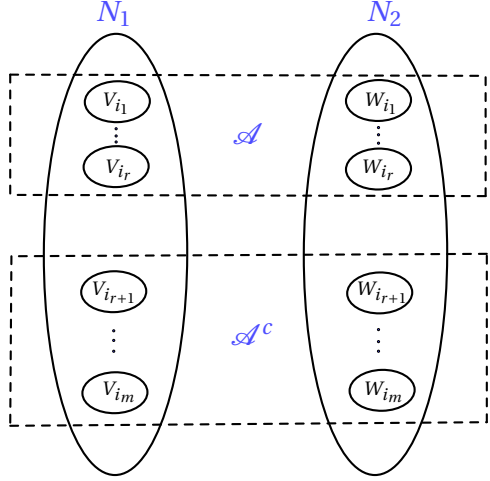
\begin{figure}
    \centering
    \definecolor{ududff}{rgb}{0.30196078431372547,0.30196078431372547,1.}
\begin{tikzpicture}[line cap=round,line join=round,>=triangle 45,x=1.0cm,y=1.0cm, scale=0.8]
\clip(2.,-2.) rectangle (11.,6.2);
\draw [rotate around={1.36392753160295:(4.36,4.53)},line width=0.8pt] (4.36,4.53) ellipse (0.5405627775045765cm and 0.3401589575822978cm);
\draw [rotate around={1.36392753160292:(4.34,3.45)},line width=0.8pt] (4.34,3.45) ellipse (0.5405627775045835cm and 0.3401589575823023cm);
\draw [rotate around={1.3639275316028892:(8.7,4.55)},line width=0.8pt] (8.7,4.55) ellipse (0.5405627775045685cm and 0.34015895758229314cm);
\draw [rotate around={1.36392753160292:(8.7,3.41)},line width=0.8pt] (8.7,3.41) ellipse (0.5405627775045974cm and 0.3401589575823112cm);
\draw [rotate around={1.3639275316029051:(4.3,1.37)},line width=0.8pt] (4.3,1.37) ellipse (0.5405627775045867cm and 0.34015895758230447cm);
\draw [rotate around={1.363927531602919:(4.32,-0.31)},line width=0.8pt] (4.32,-0.31) ellipse (0.5405627775046186cm and 0.3401589575823244cm);
\draw [rotate around={1.3639275316029202:(8.76,-0.27)},line width=0.8pt] (8.76,-0.27) ellipse (0.5405627775046397cm and 0.34015895758233794cm);
\draw [rotate around={1.36392753160292:(8.72,1.35)},line width=0.8pt] (8.72,1.35) ellipse (0.5405627775045548cm and 0.34015895758228437cm);
\draw [line width=0.8pt,dash pattern=on 3pt off 3pt] (2.58,4.98)-- (10.48,4.96);
\draw [line width=0.8pt,dash pattern=on 3pt off 3pt] (10.48,2.96)-- (10.48,4.96);
\draw [line width=0.8pt,dash pattern=on 3pt off 3pt] (10.48,2.96)-- (2.58,2.98);
\draw [line width=0.8pt,dash pattern=on 3pt off 3pt] (2.58,4.98)-- (2.58,2.98);
\draw [line width=0.8pt,dash pattern=on 3pt off 3pt] (2.6,1.96)-- (2.58,-0.86);
\draw [line width=0.8pt,dash pattern=on 3pt off 3pt] (2.58,-0.86)-- (10.5,-0.8);
\draw [line width=0.8pt,dash pattern=on 3pt off 3pt] (10.5,-0.8)-- (10.5,2.);
\draw [line width=0.8pt,dash pattern=on 3pt off 3pt] (10.5,2.)-- (2.6,1.96);
\draw [rotate around={89.83488293688639:(4.33,1.99)},line width=0.8pt] (4.33,1.99) ellipse (3.668886578188433cm and 1.191523698300298cm);
\draw [rotate around={89.83488293688639:(8.65,1.99)},line width=0.8pt] (8.65,1.99) ellipse (3.668886578188457cm and 1.191523698300306cm);
\begin{scriptsize}
\draw[color=black] (4.34,4.5) node {$V_{i_1}$};
\draw[color=black] (4.39,3.4) node {$V_{i_r}$};
\draw[color=black] (8.75,4.5) node {$W_{i_1}$};
\draw[color=black] (8.71,3.4) node {$W_{i_r}$};
\draw[color=black] (4.31,1.36) node {$V_{i_{r+1}}$};
\draw[color=black] (4.39,-0.3) node {$V_{i_m}$};
\draw[color=black] (8.79,-0.3) node {$W_{i_m}$};
\draw[color=black] (8.77,1.36) node {$W_{i_{r+1}}$};
\draw [fill=ududff] (4.3,0.38) circle (0.5pt);
\draw [fill=ududff] (4.3,0.56) circle (0.5pt);
\draw [fill=ududff] (4.3,0.72) circle (0.5pt);
\draw [fill=ududff] (8.7,0.74) circle (0.5pt);
\draw [fill=ududff] (8.7,0.54) circle (0.5pt);
\draw [fill=ududff] (8.7,0.38) circle (0.5pt);
\draw [fill=ududff] (8.7,3.84) circle (0.5pt);
\draw [fill=ududff] (8.7,3.98) circle (0.5pt);
\draw [fill=ududff] (8.7,4.1) circle (0.5pt);
\draw [fill=ududff] (4.34,3.86) circle (0.5pt);
\draw [fill=ududff] (4.34,4.08) circle (0.5pt);
\draw [fill=ududff] (4.34,3.96) circle (0.5pt);
\draw[color=ududff] (4.3,6) node {\large $N_1$};
\draw[color=ududff] (8.8,6) node {\large $N_2$};
\draw[color=ududff] (8.65,6.34) node {$U_{1}$};
\draw[color=ududff] (6.54,4) node {\large $\mathcal{A}$};
\draw[color=ududff] (6.61,0.66) node {\large $\mathcal{A}^c$};
\end{scriptsize}
\end{tikzpicture}
    \caption{A partition of the vertices in $\mathscr{L}(G(n,n,p_1))$ into $\mathcal{A}$ and $\mathcal{A}^c$ with no edges between $V_{i_s} \in \mathcal{A}$ and $W_{i_t} \in \mathcal{A}^c$ or between $V_{i_s} \in \mathcal{A}^c$ and $W_{i_t} \in \mathcal{A}$.}
    \label{f:noedges}
\end{figure}

\subsection{The excess of the giant component: proof of Theorem~\ref{t:excess}}\label{s:excess}
Using Theorem \ref{t:giant}, we can quite easily give a bound on the excess of the giant component which is of the correct asymptotic order. Indeed, we can bound the order of the giant component in quite a small interval, and then using Theorem \ref{t:complexcomponents} we can bound the probability that any component of this order has too large an excess. This is enough to show that whp the excess of the giant component is $O\left(\epsilon^3n \right)$. We formalise this in Lemma \ref{l:excess}.

 Note that, this can be seen to be of the correct order by a simple sprinkling argument: If we take $p_1 = \frac{1+\frac{\epsilon}{2}}{n}$ and $p_2 = \frac{p-p_1}{1-p_1} \geq \frac{\epsilon}{2n}$ then our previous results imply that, for an appropriate range of $\epsilon$, whp there is a giant component of order $\Theta(\epsilon n)$ in $G(n,n,p_1)$ which is equally distributed across the partition classes. However, then whp there are $\Theta\left( (\epsilon n)^2p_2\right) = \Theta\left( \epsilon^3 n\right)$ many edges of $G(n,n,p_2)$ on the vertex set of the giant component.

In order to find the \emph{correct leading constant}, we follow an argument of {\L}uczak \cite{Luczak} and use a multi-round exposure argument, starting with a supercritical $p'$ which is significantly smaller than $p$. By our weaker bound on the excess we can show that at the start of our process the excess of the giant component in $G(n,n,p')$ is $o\left(\epsilon^3 n\right)$, and we can also estimate quite precisely the change in the excess of the giant component between each stage of the multi-round exposure as we increase $p'$ to $p$, giving us an asymptotically tight bound on the excess of the giant component.

So, let us begin by deriving our weak upper bound on the excess of the giant component.

\begin{lemma}\label{l:excess}
Let $\epsilon = \epsilon(n) >0$ be such that $\epsilon^3 n 
\rightarrow \infty$ and $\epsilon 
=o(1)$, and let $p = \frac{1+\epsilon}{n}$. Then with probability $1 - O\left( (\epsilon^3 n)^{-\frac{1}{6}}\right)$ the excess of the largest component in $G(n,n,p)$ is $O\left(\epsilon^3 n\right)$.
\end{lemma}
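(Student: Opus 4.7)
The plan is to combine Theorem \ref{t:giant} with an application of Markov's inequality to the expected number of balanced complex components of the correct order and large excess, using the estimate provided by Theorem \ref{t:complexexpectation}. First, by Theorem \ref{t:giant}, with probability $1-O((\epsilon^3 n)^{-1/6})$ we have $|L_1| \in I := [k^* - \tfrac{1}{50}n^{2/3},\, k^* + \tfrac{1}{50}n^{2/3}]$, where $k^* := \frac{2(\epsilon+\epsilon')}{1+\epsilon}n \sim 4\epsilon n$, and the partition-class sizes of $L_1$ differ by at most a factor of $1+2\sqrt{\epsilon}$, so in particular $(|L_1 \cap N_1|, |L_1\cap N_2|) \in B_{|L_1|}$. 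Conditioning on this event, if the excess of $L_1$ exceeded $C\epsilon^3 n$ for a sufficiently large constant $C$, then $G(n,n,p)$ would contain a balanced component counted by $X(i,j,\ell)$ with $i+j \in I$ and $\ell \geq C\epsilon^3 n$. Hence it suffices by Markov's inequality to show that
\[
\Phi := \sum_{k \in I}\, \sum_{(i,j)\in B_k}\, \sum_{\ell = \lceil C\epsilon^3 n\rceil}^{ij-i-j} \mathbb{E}\bigl(X(i,j,\ell)\bigr) = o(1).
\]

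To evaluate $\Phi$, I would insert the bound of Theorem \ref{t:complexexpectation} and perform the three sums in turn. For the innermost sum over $(i,j)\in B_k$, the factor $(ij)^{-1/2}(i/j)^{j-i}\exp(-(i-j)^2/(2n))$ can be handled by Lemma \ref{l:difference} (with $m=\tfrac12$ and the reparametrisation $d=j-i$), which produces a contribution of order $k^{-1/2}$. For $k \in I$ we have $k \sim 4\epsilon n$ and $\delta \sim \epsilon^2/2$, so a direct calculation gives $-\delta k + \epsilon k^2/(4n) \sim 2\epsilon^3 n$; moreover the change in this expression across the window $I$ is only $O(\epsilon^2 n^{2/3}) = o(\epsilon^3 n)$, so the estimate is essentially uniform on $I$. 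For the sum over $\ell$, writing $\ell = \alpha\epsilon^3 n$ one has $(ck^3/(\ell n^2))^{\ell/2} = (64c/\alpha)^{\alpha\epsilon^3 n/2}$ up to lower-order corrections coming from $\ell\log(1+\epsilon)$ and $\ell(1+\epsilon)/n$; this is monotonically decreasing in $\ell$ once $\alpha$ exceeds an absolute constant depending on $c$, so the $\ell$-sum is dominated geometrically by its first term.

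Choosing $C$ so large that $(C/2)\log(C/(64c)) > 3$, the growth factor $e^{2\epsilon^3 n}$ coming from $-\delta k + \epsilon k^2/(4n)$ is dwarfed by a factor of at most $e^{-3\epsilon^3 n}$, so $\Phi$ is bounded by $|I| \cdot O(k^{-1}) \cdot e^{-\epsilon^3 n} = O(n^{-1/3}/\epsilon) \cdot e^{-\epsilon^3 n}$, which is $o(1)$ (and indeed much smaller than $(\epsilon^3 n)^{-1/6}$). The main obstacle is really only the bookkeeping needed to track how the various exponential and polynomial factors in Theorem \ref{t:complexexpectation} interact; the genuinely delicate work has already been carried out in the proofs of Theorem \ref{t:giant} and of the enumerative Theorem \ref{t:complexcomponents}. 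The final probability bound $1 - O((\epsilon^3 n)^{-1/6})$ is then inherited directly from Theorem \ref{t:giant}, since the contribution from Markov is of smaller order.
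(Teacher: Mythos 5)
Your proposal is correct and follows essentially the same route as the paper's proof: confine $|L_1|$ to the window $I$ around $\frac{2(\epsilon+\epsilon')}{1+\epsilon}n$ and confirm balancedness via Theorem \ref{t:giant}, bound the expected number of balanced complex components with order in $I$ and excess $\geq C\epsilon^3 n$ via Theorem \ref{t:complexexpectation}, evaluate the $(i,j)$-sum with Lemma \ref{l:difference}, dominate the $\ell$-sum geometrically by its first term, pick $C$ large so the decaying factor swamps the $e^{\Theta(\epsilon^3 n)}$ growth from $-\delta k + \epsilon k^2/(4n)$, and finish with Markov. The numerical bookkeeping (e.g.\ your explicit $64c/\alpha$ and the condition on $C$) is a minor stylistic variant of the paper's cleaner choice ``$\leq (1/e^2)^{\ell/2}$''; the substance and the final error bound $O((\epsilon^3 n)^{-1/6})$ agree.
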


\begin{proof}
We first note that, by Theorem \ref{t:giant}, with probability $1-O\left(\left(\epsilon^3 n\right)^{-\frac{1}{6}}\right)$, the largest component $L_1$ of $G(n,n,p)$ is balanced and satisfies 
\[
\left| L_1 -  \frac{2(\epsilon + \epsilon')}{1+\epsilon} n\right| < \frac{n^{\frac{2}{3}}}{50}.
\]

Let $X$ be the number of balanced components in $G(n,n,p)$ of order between
\[
\frac{2(\epsilon + \epsilon')}{1+\epsilon} n - \frac{n^{\frac{2}{3}}}{50}=:k_1 \quad\text{and}\quad k_2:= \frac{2(\epsilon + \epsilon')}{1+\epsilon}n + \frac{n^{\frac{2}{3}}}{50},
\]
which have excess at least $C\epsilon^3n$, where we will choose $C$ sufficiently large later. Then $\mathbb{E}(X)$ can be bounded above using Theorem \ref{t:complexexpectation} as
 \begin{align*}
    \mathbb{E}(X) & \lesssim  \sum_{k = k_1}^{k_2} \frac{1}{\sqrt{k}} \exp\left(-\delta k + \frac{\epsilon k^2}{4n}\right)\sum_{(i,j) \in B_k} \frac{1}{\sqrt{ij}}\left(\frac{i}{j}\right)^{j-i} \exp\left(-\frac{(i-j)^2}{2n} \right) \sum_{\ell = C \epsilon^3 n}^{ij-k} \left(\frac{ck^3(1+\epsilon)^2e^{\frac{2(1+\epsilon)}{n}}}{\ell n^2}\right)^{\frac{\ell}{2}},
     \end{align*}
     where $B_k$ is as before the set of balanced pairs $(i,j)$, since for $k_1 \leq k \leq k_2$ we have $\frac{k}{n} = o(1)$.

Let us first deal with the innermost sum. Since $k = \Theta(\epsilon n)$, for large enough $C$ we can bound
\[
\sum_{\ell = C \epsilon^3 n}^{ij-k} \left(\frac{ck^3(1+\epsilon)^2e^{\frac{2(1+\epsilon)}{n}}}{\ell n^2}\right)^{\frac{\ell}{2}} \leq \sum_{\ell = C \epsilon^3 n}^{ij-k} \left(\frac{1}{e^2}\right)^{\frac{\ell}{2}} = O\left( e^{-C \epsilon^3 n} \right).
\]
The middle sum can be dealt with by Lemma \ref{l:difference} as usual to see that
\[
\sum_{(i,j) \in B_k} \frac{1}{\sqrt{ij}}\left(\frac{i}{j}\right)^{j-i} \exp\left(-\frac{(i-j)^2}{2n} \right) = O\left( k^{-\frac{1}{2}} \right).
\]
Therefore, we can bound
\begin{align*}
 \mathbb{E}(X) = O\left(\sum_{k = k_1}^{k_2} \frac{1}{k} \exp\left(-\delta k + \frac{\epsilon k^2}{4n} - C \epsilon^3 n\right)  \right).
\end{align*}

However, since $k=\Theta(\epsilon n)$, and so both $\delta k$ and $\frac{\epsilon k^2}{n}$ are $O(\epsilon^3n)$, for $C$ large enough
\[
 \mathbb{E}(X) =O\left(\frac{k_2 - k_1}{\epsilon n} e^{-\Omega(\epsilon^3 n)}  \right) = O\left( \frac{1}{\epsilon n^{\frac{1}{3}}}e^{-\Omega(\epsilon^3 n)}  \right) = o(1).
\]
\end{proof} 

Using Lemma \ref{l:excess}, we can then determine asymptotically the excess of the giant component. As previously mentioned, we will argue via a multi-round exposure argument, taking a sequence $p_0 \leq p_1 \leq \ldots\leq p_s$ of probabilities such that $p_0$ is supercritical, but significantly smaller than $p$, and $p_s=p$. Via a standard coupling argument, we can think of sampling $G(n,n,p_0)$ and then sampling an independent sequence of bipartite random graphs $G(n,n,p'_i)$ where $p'_i =\frac{p_{i+1}-p_i}{1-p_i}$ so that for each $1\leq i \leq s$
\[
G(n,n,p_0) \cup \left(\bigcup_{j=0}^{i-1} G(n,n,p'_j)\right) \hspace{0.2cm}  \sim \hspace{0.2cm} G(n,n,p_i).
\]
This gives the inclusions $G(n,n,p_0) \subseteq G(n,n,p_1) \subseteq \ldots \subseteq G(n,n,p_s)$.

Our choice of $p_0$, together with Theorem \ref{t:excess}, guarantees that the excess of $L_1\left(G(n,n,p_0)\right)$ is significantly smaller than $\epsilon^3 n$. We then estimate precisely the change in the excess of the giant component in each of the sprinkling steps. To do so, we bound whp from above and below the number $\Delta_i$ of extra excess edges in the giant component when adding each $G(n,n,p'_i)$. Here, it is essential that the probability of failure in each step is small enough that the sum of these probabilities over all $0 \leq i \leq s-1$ is still small. Using that the excess of $L_1\left(G(n,n,p_0)\right)$ is significantly smaller than $\epsilon^3 n$, we can then asymptotically determine the excess of $L_1\left(G(n,n,p_s)\right)$ as a sum of the $\Delta_i$, which we can approximate by an integral.

\begin{theorem}\label{t:excessprecise}
Let $\epsilon = \epsilon(n) > 0$ be such that $\epsilon^3 n \gg \omega \rightarrow \infty$ and $\epsilon \leq \frac{1}{\omega}$, and let $p = \frac{1+ \epsilon}{n}$. Then with probability  $1-O\left( \omega^{-0.05}\right)$
\[
\text{excess}\left(L_1\left(G(n,n,p)\right)\right) \approx \frac{4}{3} \epsilon^3 n.
\]
\end{theorem}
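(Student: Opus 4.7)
The plan follows the multi-round exposure sketched just before the statement. Fix $\omega':=\omega^{1/100}\to\infty$, set $\epsilon_0:=\epsilon/\omega'$, choose the step size $\eta:=\epsilon/\omega^{1/50}$, and let $\epsilon_i:=\epsilon_0+(i-1)\eta$ for $i=1,\dots,s$ with $s=O(\omega^{1/50})$ chosen so that $\epsilon_s=\epsilon$. Put $p_i:=(1+\epsilon_i)/n$ and couple $G(n,n,p_1)\subseteq\dots\subseteq G(n,n,p_s)=G(n,n,p)$ so that passing from $p_i$ to $p_{i+1}$ amounts to sprinkling an independent copy of $G(n,n,p'_i)$ with $p'_i=(p_{i+1}-p_i)/(1-p_i)\approx\eta/n$. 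Write $L_1^{(i)}:=L_1(G(n,n,p_i))$ and $e_i:=\operatorname{excess}(L_1^{(i)})$. Since $\epsilon_0^3 n=\epsilon^3 n/\omega^{3/100}\gg 1$, Lemma~\ref{l:excess} applied at $\epsilon_0$ yields the initial estimate $e_1=O(\epsilon_0^3 n)=o(\epsilon^3 n)$ with failure probability $O(\omega^{-c})$ for some $c>0$.

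The heart of the proof is to pin down the one-step increment $\Delta_i:=e_{i+1}-e_i$. By Theorem~\ref{t:giant} applied at $\epsilon_i$, outside a small failure event $L_1^{(i)}$ is balanced with $|L_1^{(i)}\cap N_j|=(1\pm o(1))\,2\epsilon_i n$ for $j=1,2$. Conditional on $G(n,n,p_i)$, the number $W_i$ of new edges of $G(n,n,p'_i)$ lying inside $V(L_1^{(i)})$ is distributed as $\mathrm{Bin}\bigl(|L_1^{(i)}\cap N_1|\cdot|L_1^{(i)}\cap N_2|,\,p'_i\bigr)$ with mean $(1+o(1))\cdot 4\epsilon_i^2 n\eta$, and Lemma~\ref{l:chernoff} concentrates $W_i$ sharply around this value. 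Decomposing $V(L_1^{(i+1)})=V(L_1^{(i)})\cup V(\mathcal{A}_i)$, where $\mathcal{A}_i$ is the set of components of $G(n,n,p_i)$ absorbed into $L_1^{(i+1)}$, a direct count gives the bookkeeping identity
\[
\Delta_i \;=\; W'_i \;+\; \sum_{C\in\mathcal{A}_i}\operatorname{excess}(C),
\]
where $W'_i$ is the number of new edges of $G(n,n,p'_i)$ inside $V(L_1^{(i+1)})$. Each absorbed tree $C$ has $\operatorname{excess}(C)=-1$ but contributes a connecting edge to $W'_i-W_i$, so tree absorptions cancel to leading order; Theorems~\ref{t:unicyclic} and~\ref{t:complex} guarantee that unicyclic and complex absorptions contribute only $O(\log n)=o(\epsilon^3 n)$ in total across all $s$ steps; and the edges inside $V(L_1^{(i+1)})\setminus V(L_1^{(i)})$ are negligible since this added set is tiny compared to $V(L_1^{(i)})$. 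Thus whp $\Delta_i=(1+o(1))\cdot 4\epsilon_i^2 n\eta$ for every~$i$.

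Summing over $i$ yields a Riemann sum approximating
\[
e_s-e_1 \;\approx\; \sum_{i=1}^{s-1}4\epsilon_i^2 n\eta \;\approx\; \int_{\epsilon_0}^{\epsilon}4t^2 n\,dt \;=\; \tfrac{4}{3}(\epsilon^3-\epsilon_0^3)\,n \;\approx\; \tfrac{4}{3}\epsilon^3 n,
\]
which combined with $e_1=o(\epsilon^3 n)$ delivers the theorem. The main technical obstacle I foresee is the careful accounting of conditional probabilities across the $s$ sprinkling steps: although the sprinkled edges are unconditionally independent of $G(n,n,p_i)$, the events we want to condition on (the precise sizes $|L_1^{(i)}\cap N_j|$, the inclusion $L_1^{(i+1)}\supseteq L_1^{(i)}$, and the absence of atypical merger configurations) are not jointly monotone, so some of the required union bounds cannot be taken directly. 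This is where Harris's inequality (Lemma~\ref{l:Harris}) enters, letting us compare probabilities of increasing events under decreasing conditionings with their unconditional counterparts, at a cost small enough that the union bound over all $s=\omega^{O(1)}$ steps still yields the required $\omega^{-0.01}$ failure probability.
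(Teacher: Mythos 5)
Your proposal follows essentially the same route as the paper: a multi-round exposure starting from an $\epsilon_0$ with $e_1=o(\epsilon^3 n)$ furnished by Lemma~\ref{l:excess}, Chernoff and Theorem~\ref{t:giant} to control the per-step increment $\Delta_i$, Harris' inequality to handle the non-monotone conditioning, and a Riemann sum summing $\sum_i\Delta_i$ to $\tfrac{4}{3}\epsilon^3 n$. The exact bookkeeping identity
\[
\Delta_i \;=\; W'_i+\sum_{C\in\mathcal{A}_i}\operatorname{excess}(C)
\]
is a tidy reorganisation of what the paper does via separate upper and lower bounds in its Claims~\ref{c:lower} and~\ref{c:upper}, and your arithmetic step size $\eta=\epsilon/\omega^{1/50}$ (versus the paper's geometric $\epsilon_i=\omega^{0.2}n^{-1/3}(1+\omega^{-0.1})^{i-1}$) also gives a summable sequence of failure probabilities. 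So the architecture is sound.

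There are, however, two concrete imprecisions you should fix. First, the claim that unicyclic and complex absorptions contribute $O(\log n)=o(\epsilon^3 n)$ \emph{in total} can fail: the hypothesis is only $\epsilon^3 n\gg\omega$ with $\omega\to\infty$, so $\epsilon^3 n$ may be of order $\omega=\log\log n$, far smaller than $\log n$. The paper instead argues \emph{per step}: with probability $1-O((\epsilon_i^3n)^{-1/6})$ the contribution from excess edges in absorbed non-tree components of $G(n,n,p_i)$ is $o\bigl(\tfrac{(\epsilon_i+\epsilon_i')^2}{(1+\epsilon_i)^2}n(\epsilon_{i+1}-\epsilon_i)\bigr)$, using Lemma~\ref{l:vertices} to bound the number of vertices in unicyclic components and Markov to bound the number of sprinkled edges hitting them. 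You need the same per-step relative bound; then the error accumulates to $o(\epsilon^3 n)$ regardless of how slowly $\omega$ grows. Second, "tree absorptions cancel to leading order" is precisely the step where the real work lives: it amounts to showing that $W'_i-W_i-r+|\{C\in\mathcal{A}_i:\text{non-tree}\}|=o(W_i)$, where $r=|\mathcal{A}_i|$, i.e.\ that the number of \emph{extra} connecting edges (second or later edge hitting the same absorbed component, or a cycle in the contracted multigraph of absorptions) plus edges internal to $V(L_{1,i+1})\setminus V(L_{1,i})$ is $o(\epsilon_i^2 n\eta)$. Note that $W'_i-W_i$ itself has expectation $\Theta(\epsilon_i\eta n)\gg W_i$, so simply bounding $W'_i$ is not enough; you genuinely need the cancellation against the $-1$ per absorbed tree, and this must be proved, not merely asserted. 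Once you carry out those two estimates (the extra-edge estimate is a second-moment/susceptibility calculation, and the non-tree estimate is the per-step argument indicated above), the rest of the outline goes through as you describe.
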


\begin{proof}
 For each $i\in \mathbb{N}\cup \left\{0\right\}$, let \[\epsilon_i=\omega^{0.3}n^{-\frac{1}{3}}\left(1+\omega^{- 0.1}\right)^{i} \,\,\, \text{ and } \,\,\,  p_i=\frac{1+\epsilon_i}{n}.
 \]
Throughout the proof we work under the assumption that $i$ is small enough so that $\epsilon_i = o(1)$.
 
By a standard coupling argument we can think of moving from $G(n,n,p_i)$ to $G(n,n,p_{i+1})$ via sprinkling. That is, we choose independently for each $i$ a random graph $G(n,n,p'_i)$ where
$$p'_i=\frac{p_{i+1}-p_i}{1-p_i}=\frac{\epsilon_{i+1}-\epsilon_i}{n-1-\epsilon_i},$$
in such a way that $G(n,n,p_{i+1})=G(n,n,p_i)\cup G(n,n,p'_i)$ for each $i$. We note that, if we write $L_{1,i}$ for the largest component of $G(n,n,p_i)$ for each $i$, then by Theorem \ref{t:giant}, 
\begin{equation}
    |L_{1,i}\cap N_1|\approx \frac{\epsilon_i+\epsilon_i'}{1+\epsilon_i}n \,\,\,\,\text{    and    }\,\,\,\, |L_{1,i}\cap N_2|\approx \frac{\epsilon_i+\epsilon_i'}{1+\epsilon_i}n,\label{e:giantsize}
\end{equation}
with probability $1-O\left(\left(\epsilon_i^3n\right)^{-\frac{1}{6}}\right)$. Furthermore, by Lemma \ref{l:excess} with probability $1-O\left(\left(\epsilon_i^3n\right)^{-\frac{1}{6}}\right)$,
\begin{equation}
    a_i:=\text{excess}\left(L_{1,i}\right) = O\left(\epsilon_i^3 n \right). \label{e:giantexcess}
\end{equation}

Note that, by \eqref{e:giantexcess}, with probability $1-O\left(\left(\epsilon_0^3n\right)^{-\frac{1}{6}}\right)=1-O\left(\omega^{-0.05}\right)$, $a_0 = O\left(\omega^{0.9}\right) = o\left(\epsilon^3 n\right)$, and so to begin with we may assume that the excess is much smaller than $\epsilon^3 n$.

We show that we can control quite precisely how the excess of the giant component changes in each sprinkling step. More precisely, we claim that for each $i$, with probability $1-O\left(\left(\epsilon_i^3n\right)^{-\frac{1}{6}}\right)$
\begin{align}
     \Delta_i:=a_{i+1}-a_i \approx \frac{(\epsilon_i+\epsilon'_i)^2}{(1+\epsilon_i)^2}n(\epsilon_{i+1}-\epsilon_i).\label{e:excesschange}
 \end{align}
In order to show \eqref{e:excesschange} we bound from above and below the number of new excess edges added in step $i+1$.

\begin{claim}\label{c:lower}
With probability $1-O\left(\left(\epsilon_i^3n\right)^{-\frac{1}{6}}\right)$,
\begin{equation*}
\Delta_i \gtrsim\frac{(\epsilon_i+\epsilon_i')^2}{(1+\epsilon_i)^2}n(\epsilon_{i+1}-\epsilon_i).
\end{equation*}
\end{claim}

 \begin{proof}[Proof of Claim \ref{c:lower}]
We note that every edge in $G(n,n,p'_i) \setminus E(L_{1,i})$ which has both ends in $L_{1,i}$ adds one to the quantity $\Delta_i$  (see Figure \ref{f:lowerbound}). 
\begin{figure}
\centering
\definecolor{ududff}{rgb}{0.30196078431372547,0.30196078431372547,1.}
\begin{tikzpicture}[line cap=round,line join=round,>=triangle 45,x=1.0cm,y=1.0cm, scale=0.8]
\clip(2.,0.1) rectangle (12.,4.2);
\draw [->,line width=0.8pt] (5.92,2.44) -- (7.56,2.44);
\draw [rotate around={17.878696595841344:(4.1,2.54)},line width=0.8pt] (4.1,2.54) ellipse (1.2946594190723053cm and 1.1188132155961696cm);
\draw [rotate around={17.87869659584132:(9.5,2.58)},line width=0.8pt] (9.5,2.58) ellipse (1.2946594190723064cm and 1.1188132155961705cm);
\draw [rotate around={34.032127817338285:(9.45,2.5)},line width=0.8pt] (9.45,2.5) ellipse (1.7393127045374825cm and 1.4703430498239176cm);
\draw [line width=0.8pt] (9.5,1.92)-- (10.34,2.74);
\begin{scriptsize}
\draw[color=ududff] (3.5,2.8) node {$L_{1,i}$};
\draw [fill=ududff] (4.,2.) circle (2.5pt);
\draw[color=ududff] (3.8,1.8) node {$u$};
\draw [fill=ududff] (4.9,2.72) circle (2.5pt);
\draw[color=ududff] (4.95,2.96) node {$v$};
\draw[color=ududff] (8.88,2.8) node {$L_{1,i}$};
\draw[color=ududff] (7.9,3.7) node {$L_{1,i+1}$};
\draw [fill=ududff] (10.34,2.74) circle (2.5pt);
\draw[color=ududff] (10.35,2.98) node {$v$};
\draw [fill=ududff] (9.5,1.92) circle (2.5pt);
\draw[color=ududff] (9.4,1.68) node {$u$};
\draw[color=black] (9.5,0.3) node {step $i+1$};
\draw[color=black] (4,0.3) node {step $i$};
\end{scriptsize}
\end{tikzpicture}
    \caption{In step $i+1$ every edge $uv$ in $G(n,n,p'_i) \setminus E(L_{1,i})$ with $u,v \in V\left(L_{1,i}\right)$ contributes to $\Delta_i$.}
     \label{f:lowerbound}
\end{figure}
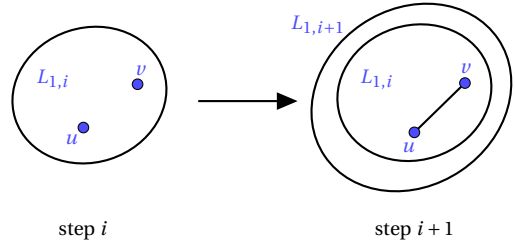
Hence, by \eqref{e:giantsize} and \eqref{e:giantexcess} with probability $1-O\left(\left(\epsilon_i^3n\right)^{-\frac{1}{6}}\right)$, $\Delta_i$ stochastically dominates a binomial random variable $Y\sim$ Bin$(m,q)$ with parameters
 \[
 m \approx\left(\frac{\epsilon_i+\epsilon_i'}{1+\epsilon_i}n\right)^2 - 2\frac{\epsilon_i+\epsilon_i'}{1+\epsilon_i}n - O(\epsilon_i^3 n)\,\,\,\, \text{    and    }\,\,\,\, q = p'_i.
\] 

Now, we see that
\[
\mathbb{E}(Y) \gtrsim \frac{(\epsilon_i+\epsilon_i')^2}{(1+\epsilon_i)^2}n^2 \frac{\epsilon_{i+1}-\epsilon_i}{n-1-\epsilon_i} \approx \frac{(\epsilon_i+\epsilon_i')^2}{(1+\epsilon_i)^2}n(\epsilon_{i+1}-\epsilon_i),
\]
and so $\mathbb{E}(Y) = \Omega(\epsilon_i^3 n)$. Hence, by Lemma \ref{l:chernoff} we obtain
\[
\mathbb{P}\left(|\mathbb{E}(Y)-Y| \geq \frac{\mathbb{E}(Y)}{(\epsilon_i^3 n)^{\frac{1}{4}}} \right) \leq \exp \left( -\Omega\left( \frac{\mathbb{E}(Y)}{\left(\epsilon_i^3 n\right)^{\frac{1}{2}}} \right)\right) \leq \exp \left( -\Omega\left(\left(\epsilon_i^3 n\right)^{\frac{1}{2}} \right)\right) = O\left(\left(\epsilon_i^3n\right)^{-\frac{1}{6}}\right).
\]

Hence, with probability $1-O\left(\left(\epsilon_i^3n\right)^{-\frac{1}{6}}\right)$, we get
\[
Y \gtrsim \mathbb{E}(Y) \gtrsim\frac{(\epsilon_i+\epsilon_i')^2}{(1+\epsilon_i)^2}n(\epsilon_{i+1}-\epsilon_i),
\]
and so with at least this probability
\begin{equation*}
\Delta_i \gtrsim\frac{(\epsilon_i+\epsilon_i')^2}{(1+\epsilon_i)^2}n(\epsilon_{i+1}-\epsilon_i).
\end{equation*}
\end{proof}

\begin{claim}\label{c:upper}
With probability $1-O\left(\left(\epsilon_i^3n\right)^{-\frac{1}{6}}\right)$
\begin{equation*}
\Delta_i \lesssim\frac{(\epsilon_{i}+\epsilon_{i}')^2}{(1+\epsilon_{i})^2}n(\epsilon_{i+1}-\epsilon_i).
\end{equation*}
\end{claim}
\begin{proof}[Proof of Claim \ref{c:upper}]
For an upper bound, we need to be slightly more careful. We note that there are two ways that edges in $G(n,n,p'_i)$ can contribute to $\Delta_i$. Firstly, edges in $G(n,n,p'_i)$ which have both endpoints in $L_{1,i+1}$ can add one to this quantity. However, there are some other edges, specifically excess edges in non-giant components of $G(n,n,p_i)$ which are joined to $L_{1,i+1}$ by an edge of $G(n,n,p'_i)$, which also add to this quantity (see Figure \ref{f:upperbound}).

We first show that the contribution from the former of these is approximately what we expect, and then show that the contribution from the latter is negligible.

\begin{figure}
    \centering
\definecolor{ududff}{rgb}{0.30196078431372547,0.30196078431372547,1.}
\begin{tikzpicture}[line cap=round,line join=round,>=triangle 45,x=1.0cm,y=1.0cm, scale=0.8]
\clip(0.,0.) rectangle (14.,4.45);
\draw [->,line width=0.8pt] (5.92,2.44) -- (7.56,2.44);
\draw [rotate around={17.878696595841326:(2.18,2.34)},line width=0.8pt] (2.18,2.34) ellipse (1.2946594190723082cm and 1.118813215596172cm);
\draw [line width=0.8pt] (11.7,1.94)-- (12.32,2.28);
\draw [rotate around={17.878696595841376:(9.5,2.34)},line width=0.8pt] (9.5,2.34) ellipse (1.2946594190723015cm and 1.118813215596167cm);
\draw [line width=0.8pt] (10.,2.54)-- (11.62,2.54);
\draw [rotate around={4.65335158726839:(10.44,2.4)},line width=0.8pt] (10.44,2.4) ellipse (2.450697426693055cm and 1.7400913416254773cm);
\draw [rotate around={26.565051177077805:(4.56,2.33)},line width=0.8pt] (4.56,2.33) ellipse (0.8189696528566512cm and 0.7471353908764746cm);
\draw [rotate around={26.565051177078097:(11.92,2.33)},line width=0.8pt] (11.92,2.33) ellipse (0.8189696528566617cm and 0.7471353908764856cm);
\draw [line width=0.8pt] (4.36,1.96)-- (4.98,2.3);
\begin{scriptsize}
\draw [fill=ududff] (11.7,1.94) circle (2.0pt);
\draw [fill=ududff] (12.32,2.28) circle (2.0pt);
\draw[color=ududff] (12.33,2.53) node {$y$};
\draw [fill=ududff] (10.,2.54) circle (2.0pt);
\draw[color=ududff] (9.9,2.3) node {$u$};
\draw [fill=ududff] (11.62,2.54) circle (2.0pt);
\draw[color=ududff] (11.68,2.78) node {$v$};
\draw[color=ududff] (8.91,2.2) node {$L_{1,i}$};
\draw[color=ududff] (1.6,2.2) node {$L_{1,i}$};
\draw[color=ududff] (8.3,3.8) node {$L_{1,i+1}$};
\draw[color=ududff] (4.4,1.74) node {$x$};
\draw[color=ududff] (11.8,1.74) node {$x$};
\draw [fill=ududff] (4.36,1.96) circle (2.0pt);
\draw [fill=ududff] (4.98,2.3) circle (2.0pt);
\draw[color=ududff] (4.99,2.54) node {$y$};
\draw [fill=ududff] (2.72,2.56) circle (2.0pt);
\draw[color=ududff] (2.6,2.3) node {$u$};
\draw [fill=ududff] (4.34,2.56) circle (2.0pt);
\draw[color=ududff] (4.44,2.78) node {$v$};
\draw[color=black] (3,0.2) node {step $i$};
\draw[color=black] (10.5,0.2) node {step $i+1$};
\end{scriptsize}
\end{tikzpicture}
    \caption{In step $i+1$ the only contribution to $\Delta_i$ comes from edges $uv$ in $ G(n,n,p'_i)$ with $u,v \in V\left(L_{1,i+1}\right)$ or excess edges $xy$ in components of $G(n,n,p_i)$ joined to $L_{1,i}$ by such an edge.}
     \label{f:upperbound}
\end{figure}
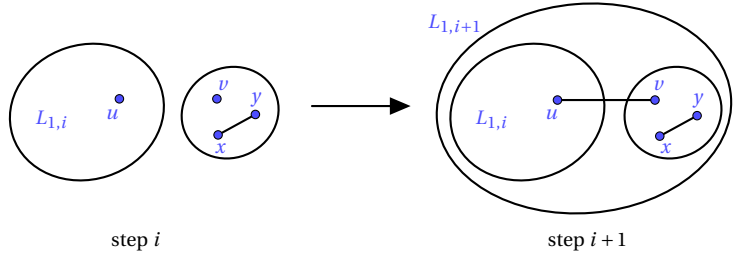
For the first of these, let $\mathcal{A}$ be the event that 
\[
\max \left\{ |C \cap N_{j}|  \colon C \text{ a component of } G(n,n,p_{i+1})\} \text{ for } j=1,2 \right\}\lesssim\frac{\epsilon_i+\epsilon_i'}{1+\epsilon_i}n.
\]
Note that,
\[
\frac{\epsilon_i+\epsilon_i'}{1+\epsilon_i}n \approx \frac{\epsilon_{i+1}+\epsilon_{i+1}'}{1+\epsilon_{i+1}}n.
\]
Then, by \eqref{e:giantsize}, $\mathbb{P}(\mathcal{A})\geq 1-O\left(\left(\epsilon^3_{i+1}n\right)^{-\frac{1}{6}}\right)$ and  $\mathcal{A}$ is a decreasing property. Thus,  by Harris'  inequality (Lemma \ref{l:Harris}), given any set of edges $F$ the probability that $F \subseteq G(n,n,p'_i)$ conditioned on $\mathcal{A}$ is strictly less than the probability that $F \subseteq G(n,n,p'_i)$. Hence, with probability $1-O\left(\left(\epsilon_i^3 n\right)^{-\frac{1}{6}}\right)$ the number of edges added to the vertex set of the new giant component is stochastically dominated by a binomial random variable Bin$(m',q'):=Z$ with parameters
\[
m'\approx\left(\frac{\epsilon_i+\epsilon_i'}{1+\epsilon_i}n\right)^2 \text{    and    } q' = p'_i.
\]

As before,  we have
\[
\mathbb{E}(Z) \lesssim\frac{(\epsilon_i+\epsilon_i')^2}{(1+\epsilon_i)^2}n^2. \frac{\epsilon_{i+1}-\epsilon_i}{n-1-\epsilon_i} \approx\frac{(\epsilon_i+\epsilon_i')^2}{(1+\epsilon_i)^2}n(\epsilon_{i+1}-\epsilon_i),
\]
and so again by Lemma \ref{l:chernoff} with probability $1-O\left(\left(\epsilon_i^3 n\right)^{-\frac{1}{6}}\right)$,
\[
Z \lesssim \mathbb{E}(Z) \lesssim \frac{(\epsilon_i+\epsilon_i')^2}{(1+\epsilon_i)^2}n(\epsilon_{i+1}-\epsilon_i).
\]

Now, let us bound the contribution to $\Delta_i$ from excess edges in non-giant components of $G(n,n,p_i)$. By Theorem \ref{t:complex} with probability $1- O\left(\left(\epsilon_i^3n\right)^{-\frac{1}{6}}\right)$ there are no complex components of order at most $n^{\frac{2}{3}}$ and by Theorem \ref{t:giant} with probability $1-O\left(\left(\epsilon_i^3n\right)^{-\frac{1}{6}}\right)$ there are no components apart from the giant component of order greater than $n^{\frac{2}{3}}$. Hence, it follows that with at least this probability every non-tree component in $G(n,n,p_i)$ except $L_{1,i}$ is unicyclic, and so the contribution to $\Delta_i$ from excess edges in non-giant components of $G(n,n,p_i)$ is equal to the number of unicyclic components in $G(n,n,p_i)$ which are joined to $L_{1,i}$ by $G(n,n,p'_i)$. We can bound this from above by the number of edges in $G(n,n,p'_i)$ which join such components to $L_{1,i}$.

Then, by Lemma \ref{l:vertices}, with probability $1-O\left(\left(\epsilon_i^3n\right)^{-\frac{1}{6}}\right)$ the number of vertices in unicyclic components of $G(n,n,p_i)$ is at most
\[
O\left(\frac{(\epsilon_i^3n)^{\frac{1}{6}}}{\epsilon_i^2} \right) = o\left( n^{\frac{2}{3}} \right).
\]
Hence, since rather crudely $|V(L_{1,i})| \leq 5 \epsilon_i n$, the expected number of edges in $G(n,n,p'_i)$ which connect unicyclic components in $G(n,n,p_i)$ to $L_{1,i}$ is less than 
 \begin{align*}
     5\epsilon_in n^\frac{2}{3}p_i' = O\left(\epsilon_i(\epsilon_{i+1}-\epsilon_i)n^\frac{2}{3}\right).
 \end{align*}
Then, by Markov's inequality, with probability $1-O\left(\left(\epsilon_i^3n\right)^{-\frac{1}{6}}\right)$ the number of such edges is at most
 \begin{align*}
    O\left(\epsilon_i(\epsilon_{i+1}-\epsilon_i)n^\frac{2}{3}\left(\epsilon_i^3n\right)^\frac{1}{6} \right)=o\left(\frac{(\epsilon_{i}+\epsilon_{i}')^2}{(1+\epsilon_{i})^2}n(\epsilon_{i+1}-\epsilon_i)\right).
 \end{align*}
 
It follows that, with probability $1-O\left(\left(\epsilon_i^3n\right)^{-\frac{1}{6}}\right)$
\begin{equation*}
\Delta_i \lesssim\frac{(\epsilon_{i}+\epsilon_{i}')^2}{(1+\epsilon_{i})^2}n(\epsilon_{i+1}-\epsilon_i).
\end{equation*}
\end{proof}

Hence, by Claims \ref{c:lower} and \ref{c:upper}, \eqref{e:excesschange} holds with probability $1-O\left(\left(\epsilon_i^3n\right)^{-\frac{1}{6}}\right)$. Therefore, by a union bound, \eqref{e:excesschange} holds for all $i \in \mathbb{N}$ such that $\epsilon_i = o(1)$ with probability 
 \[
 1-O\left(\sum_{i=0}^\infty \left(\epsilon_i^3n\right)^{-\frac{1}{6}}\right)\geq 1- O\left( \omega^{-0.05}\right),
 \]
which can be seen by noting that the sum is a geometric series.
 
Let $s \in \mathbb{N}$ be such that $\epsilon_{s-1} \leq \epsilon \leq \epsilon_s$. Then
 \begin{align}\label{eq: a_1}
     a_s - a_0 = \sum_{i=0}^{s-1}\Delta_i\approx \sum_{i=0}^{s-1}\frac{(\epsilon_i+\epsilon_i')^2}{(1+\epsilon_i)^2}n(\epsilon_{i+1}-\epsilon_i) \approx n\int_{\epsilon_0}^{\epsilon_s}\frac{(x+y)^2}{(1+x)^2}dx,
 \end{align}
 where $y=y(x)$ is implicitly given by $(1-y)e^y=(1+x)e^{-x}$, and we can approximate the sum by the integral since $\epsilon_{i+1}-\epsilon_i= o(1).$ 

 Note that, by \eqref{e:implicitderivative}, $\frac{dy}{dx}= \frac{x(1-y)}{y(1+x)}$ and so we can calculate $\frac{d}{dx}\left(\frac{x^2-y^2}{1+x}\right)=\frac{(x+y)^2}{(1+x)^2}$. Using this, and the fundamental theorem of calculus, we can conclude from \eqref{eq: a_1} that with probability ${1- O\left( \omega^{-0.05}\right)}$
 \begin{align*}
     a_s - a_0 &\approx n\left(\frac{\epsilon_s^2-\epsilon_s'^2}{1+\epsilon_s}-
     \frac{\epsilon_0^2-\epsilon_0'^2}{1+\epsilon_0}\right) \approx n \left(\frac{1}{1+\epsilon_s}\left(\frac{4}{3}\epsilon_s^3+O\left(\epsilon_s^4\right)\right)-\frac{\epsilon_0^2-\epsilon_0'^2}{1+\epsilon_0}\right)\approx\frac{4}{3}\epsilon_s^3 n,
 \end{align*}
 where we used that $\epsilon_s'=\epsilon_s-\frac{2}{3}\epsilon_s^2+O\left(\epsilon_s^3\right)$. Since, as previously mentioned, $a_0 = o(\epsilon^3 n)$ it follows that $a_s \approx \frac{4}{3}\epsilon_s^3 n$. A similar argument shows that $a_{s-1}\approx\frac{4}{3}\epsilon_{s-1}^3 n$. 
 
 Then, since 
 \[
 \epsilon_{s-1} \leq \epsilon \leq (1+ \omega^{-0.1})\epsilon_{s-1} \,\,\, \text{ and } \,\,\, \frac{\epsilon_s}{1+ \omega^{-0.1}} \leq \epsilon \leq \epsilon_s,
 \]
 and we can couple the three random bipartite graphs such that $G(n,n,p_{s-1}) \subseteq G(n,n,p) \subseteq G(n,n,p_s)$, it follows that 
\[
\text{excess}\left(L_1\left(G(n,n,p)\right)\right) \approx \frac{4}{3} \epsilon^3 n.
\]

\end{proof}

\begin{proof}[Proof of Theorem \ref{t:excess}]
The theorem follows directly from Theorem \ref{t:excessprecise}.
\end{proof}

\section{Counting bipartite graphs: proofs of Theorems~\ref{t:unicyliccomponents} and~\ref{t:complexcomponents}}\label{s:counting}

\subsection{Unicyclic bipartite graphs: proof of Theorem~\ref{t:unicyliccomponents}} Since a connected unicyclic graph is the union of a cycle and a forest, we are able to deduce Theorem \ref{t:unicyliccomponents} from the following formula for the number of bipartite forests, due to Moon \cite{M67}. 

\begin{lemma}[\cite{M67}]\label{l:bipartiteforest}
Given $i,j,s,t \in \mathbb{N}$ satisfying $s\leq i$ and $t \leq j$, let $F(i,j,s,t)$ denote the number of bipartite forests with partition classes $I = \{x_1,\ldots, x_i\}$ and $J= \{y_1,\ldots,y_j\}$ with $s + t$ components where the vertices $x_1,\ldots, x_s,y_1,\ldots,y_t$ belong to distinct components. Then 
\begin{align}\label{e:counting}
F(i,j,s,t)
= i^{j-t-1}j^{i-s-1}\left(sj + ti - st\right).
\end{align}
\end{lemma}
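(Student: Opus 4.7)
The plan is to prove Lemma~\ref{l:bipartiteforest} via the all-minors matrix-tree theorem (sometimes called the ``matrix-forest theorem'') applied to the complete bipartite graph $K_{i,j}$. The forests counted by $F(i,j,s,t)$ are precisely the spanning forests of $K_{i,j}$ having $s+t$ components such that each vertex of $R:=\{x_1,\dots,x_s,y_1,\dots,y_t\}$ lies in its own component. The matrix-forest theorem asserts that this number equals $\det(L_R)$, where $L_R$ is the principal submatrix of the Laplacian $L$ of $K_{i,j}$ obtained by deleting the $|R|=s+t$ rows and columns indexed by $R$.

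I would then carry out the determinant computation explicitly. The Laplacian of $K_{i,j}$ has the block form
\[
L=\begin{pmatrix} j\,I_i & -J_{i\times j} \\ -J_{j\times i} & i\,I_j \end{pmatrix},
\]
where $J$ denotes an all-ones matrix, so after the deletion
\[
L_R=\begin{pmatrix} j\,I_{i-s} & -J_{(i-s)\times(j-t)} \\ -J_{(j-t)\times(i-s)} & i\,I_{j-t} \end{pmatrix}.
\]
Applying the Schur complement identity with respect to the invertible block $jI_{i-s}$ gives
\[
\det L_R = j^{\,i-s}\det\!\Bigl(i\,I_{j-t}-\tfrac{i-s}{j}\,J_{(j-t)\times(j-t)}\Bigr).
\]
Since the all-ones matrix $J_{(j-t)\times(j-t)}$ has eigenvalues $j-t$ (once, with eigenvector $\mathbf 1$) and $0$ (with multiplicity $j-t-1$), the Schur complement has determinant $i^{j-t-1}\bigl(i-\tfrac{(i-s)(j-t)}{j}\bigr)$. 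Using the identity $i-\tfrac{(i-s)(j-t)}{j} = \tfrac{sj+it-st}{j}$ and combining, one obtains $\det L_R = i^{j-t-1}j^{i-s-1}(sj+it-st)$, which is exactly the formula in~\eqref{e:counting}.

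The computation is short and essentially mechanical; the main obstacle is that the argument relies on the \emph{forest} version of the matrix-tree theorem rather than the classical spanning-tree version, so I would need to either cite a reference for the all-minors matrix-tree theorem (e.g.\ Chaiken) or briefly derive it, for instance by identifying all roots in $R$ to a single super-vertex and applying the usual Kirchhoff theorem to the resulting multigraph. A few degenerate regimes --- in particular $s=i$ or $t=j$, where some rows/columns disappear and the formula must be read carefully --- need to be checked separately, but in each case the determinant calculation collapses to the correct expression.
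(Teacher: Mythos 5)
Your argument is correct, but note that the paper does not prove this lemma at all: it is quoted verbatim from Moon \cite{M67}, so there is no internal proof to compare against. Your route --- identifying the forests in question as spanning forests of $K_{i,j}$ with exactly one root per component, invoking the principal-minor (all-minors) matrix-forest theorem, and evaluating $\det L_R$ by a Schur complement --- is a legitimate self-contained derivation, and the computation checks out: $\det(jI_{i-s})=j^{i-s}$, the Schur complement $iI_{j-t}-\frac{i-s}{j}J$ has determinant $i^{j-t-1}\bigl(i-\frac{(i-s)(j-t)}{j}\bigr)$, and $ij-(i-s)(j-t)=sj+ti-st$, giving exactly \eqref{e:counting}; the boundary cases $s=i$ and/or $t=j$ collapse correctly (e.g.\ $s=i$, $t=j$ gives $1$, and $s=i$, $t<j$ gives $i^{j-t}$, both consistent with the formula under the paper's convention $s,t\geq 1$). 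The only thing you must genuinely supply is the forest version of Kirchhoff's theorem, and your proposed reduction (contract the $s+t$ roots to a single super-vertex and apply the classical matrix-tree theorem, observing that the Laplacian minor at the super-vertex is precisely $L_R$) does work, since spanning trees of the contracted multigraph correspond bijectively to spanning forests in which each component contains exactly one root. Compared with classical proofs of Moon's formula (Pr\"ufer-type encodings or induction on rooted labelled forests), your linear-algebraic argument is shorter and more mechanical, at the cost of importing Chaiken's theorem or the contraction lemma; either is acceptable, and for the purposes of this paper the citation to \cite{M67} already suffices.
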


Using Lemma \ref{l:bipartiteforest}, we can prove Theorem \ref{t:unicyliccomponents}. 

\begin{proof}[Proof of Theorem~\ref{t:unicyliccomponents}]
We note that every connected unicyclic bipartite graph with $i$ vertices in one partition class and $j$ in the other contains a unique cycle, which has length $2r$ for some $r \leq \min \{i,j\}$, and if we delete the edges of this cycle, then what remains is a forest with $2r$ components, each meeting one vertex of the cycle (see Figure \ref{f:unicyclic}).

\begin{figure}
    \centering
   \definecolor{ududff}{rgb}{0.30196078431372547,0.30196078431372547,1.}
\definecolor{ffqqtt}{rgb}{1.,0.,0.2}
\begin{tikzpicture}[line cap=round,line join=round,>=triangle 45,x=1.0cm,y=1.0cm, scale=0.6]
\clip(4.8,1.8) rectangle (11.2,6.);
\fill[line width=0.8pt,color=white] (6.66,3.4) -- (7.02,3.4) -- (7.02,3.76) -- (6.66,3.76) -- cycle;
\fill[line width=0.8pt,color=white] (7.82,4.8) -- (8.18,4.8) -- (8.18,5.16) -- (7.82,5.16) -- cycle;
\fill[line width=0.8pt,color=white] (6.,4.84) -- (6.36,4.84) -- (6.36,5.2) -- (6.,5.2) -- cycle;
\fill[line width=0.8pt,color=white] (8.82,3.4) -- (9.18,3.4) -- (9.18,3.76) -- (8.82,3.76) -- cycle;
\fill[line width=0.8pt,color=white] (10.34,1.94) -- (10.7,1.94) -- (10.7,2.3) -- (10.34,2.3) -- cycle;
\fill[line width=0.8pt,color=white] (10.56,2.78) -- (10.92,2.78) -- (10.92,3.14) -- (10.56,3.14) -- cycle;
\fill[line width=0.8pt,color=white] (7.3,1.96) -- (7.66,1.96) -- (7.66,2.32) -- (7.3,2.32) -- cycle;
\fill[line width=0.8pt,color=white] (10.6,3.72) -- (10.96,3.72) -- (10.96,4.08) -- (10.6,4.08) -- cycle;
\fill[line width=0.8pt,color=white] (8.5,1.98) -- (8.86,1.98) -- (8.86,2.34) -- (8.5,2.34) -- cycle;
\fill[line width=0.8pt,color=white] (5.38,2.4) -- (5.74,2.4) -- (5.74,2.76) -- (5.38,2.76) -- cycle;
\fill[line width=0.8pt,color=white] (9.76,4.76) -- (10.12,4.76) -- (10.12,5.12) -- (9.76,5.12) -- cycle;
\fill[line width=0.8pt,color=white] (5.02,3.4) -- (5.38,3.4) -- (5.38,3.76) -- (5.02,3.76) -- cycle;
\draw [line width=0.8pt,color=ffqqtt] (6.66,3.4)-- (7.02,3.4);
\draw [line width=0.8pt,color=ffqqtt] (7.02,3.4)-- (7.02,3.76);
\draw [line width=0.8pt,color=ffqqtt] (7.02,3.76)-- (6.66,3.76);
\draw [line width=0.8pt,color=ffqqtt] (6.66,3.76)-- (6.66,3.4);
\draw [line width=0.8pt,color=ffqqtt] (7.82,4.8)-- (8.18,4.8);
\draw [line width=0.8pt,color=ffqqtt] (8.18,4.8)-- (8.18,5.16);
\draw [line width=0.8pt,color=ffqqtt] (8.18,5.16)-- (7.82,5.16);
\draw [line width=0.8pt,color=ffqqtt] (7.82,5.16)-- (7.82,4.8);
\draw [line width=0.8pt,color=ffqqtt] (6.,4.84)-- (6.36,4.84);
\draw [line width=0.8pt,color=ffqqtt] (6.36,4.84)-- (6.36,5.2);
\draw [line width=0.8pt,color=ffqqtt] (6.36,5.2)-- (6.,5.2);
\draw [line width=0.8pt,color=ffqqtt] (6.,5.2)-- (6.,4.84);
\draw [line width=0.8pt,color=ffqqtt] (8.82,3.4)-- (9.18,3.4);
\draw [line width=0.8pt,color=ffqqtt] (9.18,3.4)-- (9.18,3.76);
\draw [line width=0.8pt,color=ffqqtt] (9.18,3.76)-- (8.82,3.76);
\draw [line width=0.8pt,color=ffqqtt] (8.82,3.76)-- (8.82,3.4);
\draw [line width=0.8pt] (6.9,4.6)-- (7.82,4.98);
\draw [line width=0.8pt] (6.9,4.6)-- (6.86,3.76);
\draw [line width=0.8pt] (8.,3.)-- (8.82,3.4);
\draw [line width=0.8pt] (9.04,4.6)-- (8.18,4.98);
\draw [line width=0.8pt] (9.04,4.6)-- (9.,3.76);
\draw [line width=0.8pt] (8.,5.78)-- (8.,5.16);
\draw [line width=0.8pt] (9.9,3.54)-- (9.18,3.58);
\draw [line width=0.8pt] (9.54,2.86)-- (9.18,3.4);
\draw [line width=0.8pt] (6.04,3.28)-- (6.66,3.5);
\draw [line width=0.8pt,color=ffqqtt] (10.34,1.94)-- (10.7,1.94);
\draw [line width=0.8pt,color=ffqqtt] (10.7,1.94)-- (10.7,2.3);
\draw [line width=0.8pt,color=ffqqtt] (10.7,2.3)-- (10.34,2.3);
\draw [line width=0.8pt,color=ffqqtt] (10.34,2.3)-- (10.34,1.94);
\draw [line width=0.8pt,color=ffqqtt] (10.56,2.78)-- (10.92,2.78);
\draw [line width=0.8pt,color=ffqqtt] (10.92,2.78)-- (10.92,3.14);
\draw [line width=0.8pt,color=ffqqtt] (10.92,3.14)-- (10.56,3.14);
\draw [line width=0.8pt,color=ffqqtt] (10.56,3.14)-- (10.56,2.78);
\draw [line width=0.8pt,color=ffqqtt] (7.3,1.96)-- (7.66,1.96);
\draw [line width=0.8pt,color=ffqqtt] (7.66,1.96)-- (7.66,2.32);
\draw [line width=0.8pt,color=ffqqtt] (7.66,2.32)-- (7.3,2.32);
\draw [line width=0.8pt,color=ffqqtt] (7.3,2.32)-- (7.3,1.96);
\draw [line width=0.8pt,color=ffqqtt] (10.6,3.72)-- (10.96,3.72);
\draw [line width=0.8pt,color=ffqqtt] (10.96,3.72)-- (10.96,4.08);
\draw [line width=0.8pt,color=ffqqtt] (10.96,4.08)-- (10.6,4.08);
\draw [line width=0.8pt,color=ffqqtt] (10.6,4.08)-- (10.6,3.72);
\draw [line width=0.8pt,color=ffqqtt] (8.5,1.98)-- (8.86,1.98);
\draw [line width=0.8pt,color=ffqqtt] (8.86,1.98)-- (8.86,2.34);
\draw [line width=0.8pt,color=ffqqtt] (8.86,2.34)-- (8.5,2.34);
\draw [line width=0.8pt,color=ffqqtt] (8.5,2.34)-- (8.5,1.98);
\draw [line width=0.8pt,color=ffqqtt] (5.38,2.4)-- (5.74,2.4);
\draw [line width=0.8pt,color=ffqqtt] (5.74,2.4)-- (5.74,2.76);
\draw [line width=0.8pt,color=ffqqtt] (5.74,2.76)-- (5.38,2.76);
\draw [line width=0.8pt,color=ffqqtt] (5.38,2.76)-- (5.38,2.4);
\draw [line width=0.8pt,color=ffqqtt] (9.76,4.76)-- (10.12,4.76);
\draw [line width=0.8pt,color=ffqqtt] (10.12,4.76)-- (10.12,5.12);
\draw [line width=0.8pt,color=ffqqtt] (10.12,5.12)-- (9.76,5.12);
\draw [line width=0.8pt,color=ffqqtt] (9.76,5.12)-- (9.76,4.76);
\draw [line width=0.8pt,color=ffqqtt] (5.02,3.4)-- (5.38,3.4);
\draw [line width=0.8pt,color=ffqqtt] (5.38,3.4)-- (5.38,3.76);
\draw [line width=0.8pt,color=ffqqtt] (5.38,3.76)-- (5.02,3.76);
\draw [line width=0.8pt,color=ffqqtt] (5.02,3.76)-- (5.02,3.4);
\draw [line width=0.8pt] (6.04,3.28)-- (5.38,3.56);
\draw [line width=0.8pt] (6.04,3.28)-- (5.74,2.76);
\draw [line width=0.8pt] (6.36,4.84)-- (6.9,4.6);
\draw [line width=0.8pt] (5.4,5.48)-- (6.,5.2);
\draw [line width=0.8pt] (6.08,5.84)-- (6.16,5.2);
\draw [line width=0.8pt] (5.2,4.86)-- (6.,4.94);
\draw [line width=0.8pt] (9.04,4.6)-- (9.76,4.76);
\draw [line width=0.8pt] (10.12,5.12)-- (10.6,5.44);
\draw [line width=0.8pt] (8.,3.)-- (7.48,2.32);
\draw [line width=0.8pt] (8.,3.)-- (8.68,2.34);
\draw [line width=0.8pt] (9.9,3.54)-- (10.6,3.88);
\draw [line width=0.8pt] (9.54,2.86)-- (10.34,2.3);
\draw [line width=0.8pt] (9.9,3.54)-- (10.56,3.14);
\draw [line width=0.8pt] (7.02,3.4)-- (8.,3.);
\begin{scriptsize}
\draw [fill=ududff] (6.9,4.6) circle (4.0pt);
\draw [fill=ududff] (8.,3.) circle (4.0pt);
\draw [fill=ududff] (9.04,4.6) circle (4.0pt);
\draw [fill=ududff] (8.,5.78) circle (4.0pt);
\draw [fill=ududff] (9.9,3.54) circle (4.0pt);
\draw [fill=ududff] (9.54,2.86) circle (4.0pt);
\draw [fill=ududff] (6.04,3.28) circle (4.0pt);
\draw [fill=ududff] (6.08,5.84) circle (4.0pt);
\draw [fill=ududff] (5.4,5.48) circle (4.0pt);
\draw [fill=ududff] (5.2,4.86) circle (4.0pt);
\draw [fill=ududff] (10.6,5.44) circle (4.0pt);
\end{scriptsize}
\end{tikzpicture}
    \caption{Every connected unicyclic bipartite graph contains an even cycle $C$ whose deletion (i.e., when we delete its edges) leaves a bipartite forest with $|V(C)|$ many components.}
   \label{f:unicyclic}
\end{figure}
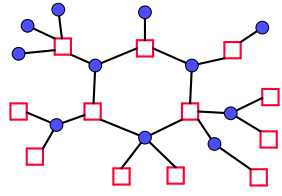

Hence, we can count $C(i,j,0)$ by first choosing a cycle of length $2r$, of which there are $\frac{(i)_r(j)_r}{2r}$ many possibilities, and then choosing from the $F(i,j,r,r)$ many possibilities for the forest left by the deletion of this cycle.
 
Hence, it follows from \eqref{e:counting} that
 \begin{align}\label{e:unicycliccount}
 C(i,j,0) = \sum_{r=2}^{\min\{i,j\}} \frac{(i)_r (j)_r}{2r} F(i,j,r,r)= \frac{1}{2} i^{j-1} j^{i-1} \sum_{r=2}^{\min\{i,j\}} \frac{(i)_r(j)_r}{i^r j^r}\left(i+j-r\right),
 \end{align}
proving the first part of Theorem \ref{t:unicyliccomponents}.

So let us suppose further that $i,j = \omega(1)$ and $\frac{1}{2} \leq \frac{i}{j} \leq 2$. By \eqref{e:fallingfactorialbound} we can conclude that
\[
\frac{(i)_r(j)_r}{i^r j^r}\leq  \exp \left(-\frac{(r-1)^2}{2i} - \frac{(r-1)^2}{2j} \right),
\] 
and furthermore by \eqref{e:fallingfactorial} it follows that if $r = o\left(i^{\frac{2}{3}}\right)$ and $r=  o\left(j^{\frac{2}{3}}\right)$, then
\[
\frac{(i)_r(j)_r}{i^r j^r} \approx \exp \left(-\frac{r^2}{2i} - \frac{r^2}{2j}\right).
\]

We split \eqref{e:unicycliccount} into two parts. Firstly, when $r \leq i^{\frac{5}{9}}$ we note that $r = o\left(i^{\frac{2}{3}}\right)$ and $r= o\left(j^{\frac{2}{3}}\right)$, and hence
\begin{align*}
\sum_{r=2}^{i^{\frac{5}{9}}} \frac{(i)_r(j)_r}{i^r j^r} \left(i+j-r\right)\approx\left(i+j\right) \sum_{r=2}^{i^{\frac{5}{9}}}  \exp \left(-\frac{r^2}{2i}-\frac{r^2}{2j}\right)\approx \left(i+j\right)\sqrt{\frac{\pi ij}{2(i+j)}},
\end{align*}
where the final line follows from a standard estimate that 
\[
\sum_{r=1}^{\infty} e^{-\frac{r^2}{2n}} \approx \int_0^\infty e^{-\frac{x^2}{2n}} dx = \sqrt{\frac{\pi n}{2}}.
\]

Conversely, when $r > i^{\frac{5}{9}}$ we can naively bound
\begin{align*}
\sum_{r=i^{\frac{5}{9}}+1}^{\min \{i,j\}} \frac{(i)_r(j)_r}{i^r j^r}\left(i+j-r\right) \leq\left(i+j\right) i \exp \left(-\frac{(i^{\frac{5}{9}}-1)^2}{2i} - \frac{(i^{\frac{5}{9}}-1)^2}{2j}\right)\leq\left(i+j\right) i \exp \left( -\Omega\left(i^{\frac{1}{9}}\right) \right)= o\left(i+j\right).
 \end{align*}
 
 It follows that
 \begin{align*}
 C(i,j,0) \approx \sqrt{\frac{\pi ij}{8(i+j)}}\left(i+j\right) i^{j-1} j^{i-1}= \sqrt{\frac{\pi}{8}} \sqrt{i+j} i^{j-\frac{1}{2}} j^{i-\frac{1}{2}}.
 \end{align*}
\end{proof}

\subsection{Bipartite graphs with positive excess: proof of Theorem~\ref{t:complexcomponents}}
We use similar counting arguments as Bollob\'{a}s \cite{BollobasBook} in his proof of \eqref{e:complexgeneral}. The case where $\ell > i+j$ will be significantly easier, so let us first assume that $\ell \leq i+j$.

Given a graph $H$, let the {\em core} $C(H)$ of $H$ be the maximal subgraph of $H$ with minimum degree at least two. Furthermore, we call a path in $C(H)$ {\em maximal bare} if all its internal vertices are of degree two and its endvertices have degree at least three. We obtain the {\em kernel} $K(H)$ of $H$ by replacing each maximal bare path in $C(H)$ by an edge, i.e., we delete all edges and internal vertices of the path and add a new edge between the two endpoints. Using the kernel and core of a graph, we can construct all connected bipartite graphs with partition classes $I=\left\{x_1, \ldots, x_i\right\}$ and $J=\left\{y_1, \ldots, y_j\right\}$ and $i+j+\ell$ edges as follows:

\begin{itemize}
    \item[(C1)] Choose the vertex set $V(K)$ of the kernel. We set $t_1:=|I\cap V(K)|$, $t_2:=|J\cap V(K)|$, and $t=t_1+t_2$. As $K$ has minimum degree at least three, we have $t\leq 2\ell$.
    \item[(C2)] Choose for the kernel $K$ a connected (not necessarily bipartite) multigraph on vertex set $V(K)$ having $t+\ell$ edges;
    \item[(C3)] Choose the vertex set $V(C)$ of the core $C$. As $V(K)$ is already fixed, we have to pick only the vertices in $\left(V(C)\setminus V(K)\right)$. We define $u_1$ and $u_2$ such that $|V(C)\cap I|=t_1+u_1$ and $|V(C)\cap J|=t_2+u_2$, respectively. Furthermore, let $u=u_1+u_2$.
    \item[(C4)] Subdivide the edges of $K$ by the vertices from $\left(V(C)\setminus V(K)\right)$ such that the resulting core has no loops or multiple edges and is bipartite with respect to the vertex bipartition $(I,J)$. (The requirement of being bipartite can only be fulfilled for appropriate choices of the set $V(C)\setminus V(K)$ in step (C3));
    \item[(C5)]
    Choose a forest $F$ on vertex set $I\cup J$ having $u_1+u_2+t_1+t_2$ tree components such that all vertices from $V(C)$ lie in different components and that there is no edge between a vertex in $I$ and a vertex in $J$ ($F$ is a \lq rooted bipartite forest\rq). We obtain the graph by replacing each vertex $v$ in $C$ by the tree of $F$ rooted at $v$.
\end{itemize}
To show the claimed bound on $C(i,j,\ell)$, we estimate the number of choices we have in each construction step. For fixed $t_1$ and $t_2$ the number of options for $V(K)$ in step (C1) is
\begin{align}\label{eq:9}
\binom{i}{t_1}\binom{j}{t_2}.
\end{align}

The kernel $K$ in step (C2) is determined by choosing for each pair $v, w\in V(K)$ the number of edges between $v$ and $w$ and for each $x\in V(K)$ the number of loops at $x$. Hence, we have to \lq partition\rq\ the $t+\ell$ many edges into $\binom{t}{2}+t$ many (possible empty) groups, each of them corresponding to a pair of vertices or a single vertex. Hence, the number of choices in step (C2) is at most
\begin{align}\label{eq:7}
  \binom{t+\ell+\binom{t}{2}+t-1}{\binom{t}{2}+t-1}=\binom{t+\ell+\binom{t}{2}+t-1}{t+\ell}\leq \left(\frac{e\left(t+\ell+\binom{t}{2}+t-1\right)}{t+\ell}\right)^{t+\ell} =O(1)^\ell \ell^{t+\ell},
\end{align}
where we used in the last inequality that $t\leq 2\ell$.

In step (C3) we have for fixed $u_1$ and $u_2$ 
\begin{align}\label{eq:8}
\binom{i-t_1}{u_1}\binom{j-t_2}{u_2}
\end{align}
choices for the set $V(C)\setminus V(K)$. When constructing $C$ from $K$, the number of vertices in $I$ that are placed on some fixed edge differs at most by one from the number of vertices in $J$ that are placed on the same edge. As there are at most $3\ell$ edges in $K$, we obtain $\left|u_1-u_2\right|\leq 3\ell$.

To bound the possible choices in (C4), we fix an ordering $v_1w_1, \ldots, v_{t+\ell}w_{t+\ell}$ of the edges in $K$ and an orientation for each of these edges (say from $v_s$ to $w_s$). We can construct each possible core $C$ by choosing permutations $\alpha_1, \ldots, \alpha_{u_1}$ and $\beta_1, \ldots, \beta_{u_2}$ of the vertices in $\left(V(C)\setminus V(K)\right)\cap I$ and $\left(V(C)\setminus V(K)\right)\cap J$, respectively, and non-negative integers $r_1, \ldots, r_{t+\ell}$ with $r_1+\ldots+r_{t+\ell}=u$. Then we consider the edges $v_1w_1, \ldots, v_{t+\ell}w_{t+\ell}$ one after another and subdivide each edge $v_sw_s$ with $r_s$ many vertices. If $v_s\in I$, we start with the vertex in $\beta_1, \ldots, \beta_{u_2}$ with smallest index which has not been used for a previous edge and then place alternatingly the first unused vertex from $\alpha_1, \ldots, \alpha_{u_1}$ and $\beta_1, \ldots, \beta_{u_2}$ on the edge $v_sw_s$. We proceed similarly in the case $v_s\in J$. Using this construction, we obtain the following upper bound for the number of different ways of performing (C4)
\begin{align}\label{eq:12}
    (u_1)!(u_2)!\cdot \binom{u+t+\ell-1}{t+\ell-1}.
\end{align}
We note that only certain choices of $r_1, \ldots, r_{t+\ell}$ lead to bipartite graphs.

Due to Lemma \ref{l:bipartiteforest} the number of choices for $F$ in step (C5) is
\begin{align}\label{eq:10}
    i^{j-t_2-u_2-1}j^{i-t_1-u_1-1}\left((t_1+u_1)j+(t_2+u_2)i-(t_1+u_1)(t_2+u_2)\right) \leq  i^{j-t_2-u_2}j^{i-t_1-u_1}\left(\frac{t_1+u_1}{i}+\frac{t_2+u_2}{j}\right).
\end{align}

Combining \eqref{eq:9}, \eqref{eq:7}, \eqref{eq:8}, \eqref{eq:12}, and \eqref{eq:10} we get

\begin{align}\label{eq:13}
    C(i,j,\ell)\leq \sum_{t_1,t_2,u_1,u_2}\binom{i}{t_1}\binom{j}{t_2}O(1)^\ell \ell^{t+\ell}\binom{i-t_1}{u_1}\binom{j-t_2}{u_2} (u_1)!(u_2)! \binom{u+t+\ell-1}{t+\ell-1}i^{j-t_2-u_2}j^{i-t_1-u_1}\left(\frac{t_1+u_1}{i}+\frac{t_2+u_2}{j}\right),
\end{align}
where the sum is over all non-negative integers $t_1, t_2, u_1, u_2$ satisfying $t_1+t_2\leq 2\ell$, $u_1\leq i-t_1$, $u_2\leq j-t_2$, and $\left|u_1-u_2\right|\leq 3\ell$.

Due to \eqref{e:fallingfactorialbound} we get
\begin{align*}
    \binom{i}{t_1}\binom{i-t_1}{u_1}(u_1)!=\frac{(i)_{t_1+u_1}}{(t_1)!}\leq
    \frac{i^{t_1+u_1}}{(t_1)!} \exp\left(-\frac{(t_1+u_1)^2}{2i}\right)O(1),
\end{align*}
where we used in the last inequality that $t_1+u_1\leq i$. Similarly, we have
\begin{align*}
 \binom{j}{t_2}\binom{j-t_2}{u_2}(u_2)!\leq \frac{j^{t_2+u_2}}{(t_2)!} \exp\left(-\frac{(t_2+u_2)^2}{2j}\right)O(1).
\end{align*}
Hence, we get in \eqref{eq:13}
\begin{align}\label{eq:14}
C(i,j,\ell)\leq i^jj^iO(1)^\ell \ell^\ell \sum_{t_1,t_2,u_1,u_2}& \frac{\ell^t}{(t_1)!(t_2)!}\left(\frac{i}{j}\right)^{t_1+u_1-t_2-u_2}\binom{u+t+\ell-1}{t+\ell-1} \nonumber
\\
&~~~\times\exp\left(-\frac{(t_1+u_1)^2}{2i}-\frac{(t_2+u_2)^2}{2j}\right) \left(\frac{t_1+u_1}{i}+\frac{t_2+u_2}{j}\right).
\end{align}
Using $\frac{1}{2}\leq\frac{i}{j}\leq 2$, $|u_1-u_2|\leq 3\ell$, and $t_1, t_2\leq 2\ell$, we have
\begin{align*}
  \left(\frac{i}{j}\right)^{t_1+u_1-t_2-u_2}=O(1)^\ell. 
\end{align*}
Thus, we obtain in \eqref{eq:14}
\begin{align}\label{eq:15}
C(i,j,\ell)\leq i^jj^iO(1)^\ell \ell^\ell \sum_{t_1,t_2,u_1,u_2}S(t_1,t_2,u_1,u_2),
\end{align}
where 
\begin{align*}
 S(t_1,t_2,u_1,u_2):=\frac{\ell^t}{(t_1)!(t_2)!}\exp\left(-\frac{(t_1+u_1)^2}{2i}-\frac{(t_2+u_2)^2}{2j}\right)\binom{u+t+\ell-1}{t+\ell-1}\left(\frac{t_1+u_1}{i}+\frac{t_2+u_2}{j}\right).   
\end{align*}
To analyse \eqref{eq:15}, we distinguish two cases depending how large $u$ is. First, we consider the case $u\leq 4\ell$. Then we have
\begin{align}\label{eq:16}
 \binom{u+t+\ell-1}{t+\ell-1}\leq 2^{u+t+\ell-1}\leq 2^{4\ell+2\ell+\ell-1}=O(1)^\ell.   
\end{align}
Similarly, we have
\begin{align}\label{eq:20}
    \left(\frac{t_1+u_1}{i}+\frac{t_2+u_2}{j}\right)=\frac{O(\ell)}{\sqrt{ij}}.
\end{align}
Furthermore, using Lemma \ref{l:spencer} we get
\begin{align}\label{eq:17}
    \sum_{u_1}\exp\left(-\frac{(t_1+u_1)^2}{2i}\right)\leq \sum_{k=1}^\infty \exp\left(-\frac{k^2}{2i}\right)\leq \int_0^\infty \exp\left(-\frac{x^2}{2i}\right)dx+O(1)=O\left(\sqrt{i}\right).
\end{align}
Analogously, we obtain
\begin{align}\label{eq:18}
    \sum_{u_2}\exp\left(-\frac{(t_2+u_2)^2}{2j}\right)=O\left(\sqrt{j}\right).
\end{align}
Combining \eqref{eq:16}, \eqref{eq:20}, \eqref{eq:17}, and \eqref{eq:18} we get
\begin{align}\label{eq:19}
    \sum_{t_1,t_2}~\sum_{u_1+u_2\leq 4\ell}S(t_1,t_2,u_1,u_2)\leq O(1)^\ell  \sum_{t_1,t_2} \frac{\ell^t}{(t_1)!(t_2)!}.
\end{align}
Furthermore, we have
\begin{align*}
    \sum_{t_1,t_2} \frac{\ell^t}{(t_1)!(t_2)!}=\sum_{t_1} \frac{\ell^{t_1}}{(t_1)!}\sum_{t_2} \frac{\ell^{t_2}}{(t_2)!}\leq \exp(2\ell)=O(1)^\ell.
\end{align*}
Hence, we get
\begin{align}\label{eq:21}
    \sum_{t_1,t_2}~\sum_{u_1+u_2\leq 4\ell}S(t_1,t_2,u_1,u_2)\leq O(1)^\ell.
\end{align}

Now we consider the case $u>4\ell$. Due to $|u_1-u_2|\leq 3\ell$, we have $u_1, u_2>\ell/2$. Combining that with $t_1, t_2\leq 2\ell$ and $\frac{1}{2}\leq\frac{i}{j}\leq 2$, we obtain
\begin{align}\label{eq:22}
    \left(\frac{t_1+u_1}{i}+\frac{t_2+u_2}{j}\right)=\Theta(1)\frac{u}{i+j}.
\end{align}
Furthermore, we have
\begin{align}\label{eq:23}
  \binom{u+t+\ell-1}{t+\ell-1}\leq \frac{\left(u+t+\ell-1\right)^{t+\ell-1}}{(t+\ell-1)!}=\frac{O(1)^\ell u^{t+\ell-1}}{(t+\ell-1)!}.
\end{align}
Using the inequality $\frac{x^2}{r}+\frac{y^2}{s}\geq \frac{(x+y)^2}{r+s}$ for $r,s,x,y\in\mathbb{R}_{>0}$ we have
\begin{align}\label{eq:24}
    \exp\left(-\frac{(t_1+u_1)^2}{2i}-\frac{(t_2+u_2)^2}{2j}\right)\leq \exp\left(-\frac{\left(t+u\right)^2}{2(i+j)}\right).
\end{align}
Combining \eqref{eq:22}, \eqref{eq:23}, and \eqref{eq:24} we have
\begin{align}\label{eq:25}
\sum_{t_1,t_2}~\sum_{u_1+u_2>4\ell}S(t_1,t_2,u_1,u_2)
&\leq \sum_{t_1,t_2}~\sum_{u_1+u_2>4\ell}\frac{\ell^t}{(t_1)!(t_2)!} \exp\left(-\frac{\left(t+u\right)^2}{2(i+j)}\right)\frac{O(1)^\ell u^{t+\ell-1}}{(t+\ell-1)!}\frac{u}{i+j}\nonumber
\\
&=\sum_{t_1,t_2}\frac{O(1)^\ell \ell^t}{(t+\ell-1)!(i+j)(t_1)!(t_2)!}
\sum_{u_1+u_2>4\ell}\exp\left(-\frac{\left(t+u\right)^2}{2(i+j)}\right)u^{t+\ell}.
\end{align}
Due to $|u_1-u_2|\leq 3\ell$, there are for each fixed value of $u$ at most $3\ell+1$ different pairs $(u_1, u_2)$ with $u_1+u_2=u$. Hence, we have
\begin{align*}
    \sum_{u_1+u_2>4\ell}\exp\left(-\frac{\left(t+u\right)^2}{2(i+j)}\right)u^{t+\ell}&=O(1)^\ell\sum_{u=4\ell+1}^{i+j}\exp\left(-\frac{\left(t+u\right)^2}{2(i+j)}\right)u^{t+\ell} \leq O(1)^\ell\int_{0}^\infty x^{t+\ell}\exp\left(-\frac{x^2}{2(i+j)}\right)dx
    \\
    &= O(1)^\ell\left(i+j\right)^{\frac{(t+\ell+1)}{2}}\int_{0}^\infty y^{t+\ell}\exp\left(-\frac{y^2}{2}\right)dy \leq O(1)^\ell\left(i+j\right)^{\frac{t+\ell+1}{2}} \sqrt{(t+\ell)!},
\end{align*}
where we used in the last inequality that $\int_{0}^\infty y^{n}\exp\left(\frac{-y^2}{2}\right)dy\leq \sqrt{n!}$, which can be shown by repeatedly applying integration by parts (see e.g., \cite[Exercise 9 of Chapter 1]{BollobasBook}). Plugging this into \eqref{eq:25} we obtain
\begin{align}\label{eq:26}
 \sum_{t_1,t_2}~\sum_{u_1+u_2>4\ell}S(t_1,t_2,u_1,u_2)
&\leq \sum_{t_1,t_2}\frac{O(1)^\ell \ell^t(i+j)^{\frac{t+\ell-1}{2}}\sqrt{(t+\ell)!}}{(t+\ell-1)!(t_1)!(t_2)!}\nonumber
\\
&=O(1)^\ell (i+j)^{\frac{\ell-1}{2}}\sum_{t_1,t_2}\frac{\ell^t(i+j)^{\frac{t}{2}}}{\sqrt{(t+\ell)!}(t_1)!(t_2)!}.
\end{align}
Using $(t+\ell)!\geq \left(\frac{t+\ell}{e}\right)^{t+\ell}=O(1)^\ell \ell^{t+\ell}$, we obtain in \eqref{eq:26}
\begin{align}\label{eq:27}
 \sum_{t_1,t_2}~\sum_{u_1+u_2>4\ell}S(t_1,t_2,u_1,u_2)\leq O(1)^\ell(i+j)^{\frac{\ell-1}{2}}\ell^{-\frac{\ell}{2}}\sum_{t_1,t_2}\frac{\ell^{\frac{t}{2}}(i+j)^{\frac{t}{2}}}{(t_1)!(t_2)!}.
\end{align}
Furthermore, we have
\begin{align}\label{eq:28}
    \frac{1}{(t_1)!(t_2)!}\leq\frac{1}{t!}\binom{t}{\left\lfloor \frac{t}{2}\right\rfloor}\leq \frac{1}{t!}\left(\frac{et}{\left\lfloor \frac{t}{2}\right\rfloor}\right)^{\left\lfloor \frac{t}{2}\right\rfloor}=\frac{O(1)^{\ell}}{t!}.
\end{align}
As $t_1,t_2\leq 2\ell$, there are for every fixed value of $t$ at most $2\ell+1$ many pairs $(t_1,t_2)$ with $t_1+t_2=t$. Using that and \eqref{eq:28} in \eqref{eq:27} we get
\begin{align}\label{eq:29}
    \sum_{t_1,t_2}~\sum_{u_1+u_2>4\ell}S(t_1,t_2,u_1,u_2)\leq O(1)^\ell(i+j)^{\frac{\ell-1}{2}}\ell^{-\frac{\ell}{2}}\sum_{t=1}^{2\ell}\frac{\ell^{\frac{t}{2}}(i+j)^{\frac{t}{2}}}{t!}.
\end{align}
For consecutive terms in the sum in \eqref{eq:29} we have
\begin{align*}
    \frac{\ell^{\frac{t+1}{2}}(i+j)^{\frac{t+1}{2}}}{(t+1)!} :\frac{\ell^{\frac{t}{2}}(i+j)^{\frac{t}{2}}}{t!}&=\frac{\sqrt{\ell(i+j)}}{t+1}
    \geq \frac{\sqrt{\ell(i+j)}}{3\ell}\geq \frac{1}{3},
\end{align*}
where we used in the last inequality that $\ell\leq i+j$. Hence, we have for all $t\leq 2\ell$
\begin{align*}
    \frac{\ell^{\frac{t}{2}}(i+j)^{\frac{t}{2}}}{t!}\leq 3^{2\ell-t}\frac{\ell^\ell (i+j)^\ell}{(2\ell)!}\leq O(1)^\ell \frac{\ell^\ell(i+j)^\ell}{\left(\frac{2\ell}{e}\right)^{2\ell}}=O(1)^\ell\frac{(i+j)^\ell}{\ell^\ell}.
\end{align*}
This yields in \eqref{eq:29}
\begin{align}\label{eq:30}
    \sum_{t_1,t_2}~\sum_{u_1+u_2>4\ell}S(t_1,t_2,u_1,u_2)\leq O(1)^\ell(i+j)^{\frac{3\ell-1}{2}}\ell^{-\frac{3\ell}{2}}.
\end{align}
This concludes the case $u>4\ell$.
Combining \eqref{eq:15}, \eqref{eq:21}, and \eqref{eq:30} we obtain
\begin{align*}
C(i,j,\ell)&\leq i^jj^iO(1)^\ell \ell^\ell \left(O(1)^\ell+O(1)^\ell(i+j)^{\frac{3\ell-1}{2}}\ell^{-\frac{3\ell}{2}}\right)
\\
&=i^jj^iO(1)^\ell \ell^\ell(i+j)^{\frac{3\ell-1}{2}}\ell^{-\frac{3\ell}{2}}
\\
&\leq i^{j}j^{i}(i+j)^{\frac{3\ell-1}{2}}\left(\frac{c}{\ell}\right)^{\frac{\ell}{2}},
\end{align*}
for a suitable $c>0$.

In the case that $\ell > i+j:=k$ we can argue more directly. Indeed, we can naively bound $C(i,j,\ell)$ by looking at the total number of bipartite graphs, connected or disconnected, with partitions classes of size $i$ and $j$ and $k+\ell$ edges, which is clearly $\binom{ij}{k+\ell}$.

Using the elementary bound $\binom{n}{r}\leq \left(\frac{en}{r}\right)^{r}$ for all $r \leq n$, it follows that
\begin{align}
C(i,j,\ell)&\leq \binom{ij}{k+\ell}\leq\left(\frac{eij}{k+\ell}\right)^{k+\ell}=\sqrt{k}e^{k+\ell}i^jj^ik^{\frac{3\ell-1}{2}}\ell^{-\frac{\ell}{2}}\left(\frac{i}{k}\right)^{i+\ell}\left(\frac{j}{k}\right)^{j+\ell}\left(\frac{k}{k+\ell}\right)^{k+\frac{\ell}{2}}\left(\frac{\ell}{k+\ell}\right)^{\frac{\ell}{2}} \nonumber
\\
&=O(1)^\ell i^jj^ik^{\frac{3\ell-1}{2}}\ell^{-\frac{\ell}{2}} \label{e:largeell},
\end{align}
where we used in the last equality that $\sqrt{k}e^{k+\ell}=O(1)^\ell$ since $\ell>k$. We note that with a more careful calculation it can be shown that the $O(1)^{\ell}$ term in \eqref{e:largeell} is in fact at most one.

\section{Discussion} \label{s:Discuss}

We have presented some initial results about the structure of $G(n,n,p)$ in the weakly supercritical regime, however many interesting questions still remain. For example, {\L}uczak \cite{Luczakcycle} described in more detail the distribution of cycles in $G(n,p)$ in this regime. In particular, if we let the \emph{girth} of a graph be the length of the shortest cycle and the \emph{circumference} be the length of the longest cycle, then {\L}uczak determined asymptotically the girth and circumference of the giant component of $G(n,p)$ and the length of the longest cycle outside of the giant component.
 
 \begin{question}
 Let $\epsilon=\epsilon(n) >0$ be such that $\epsilon^3n \to \infty$ and $\epsilon = o(1)$, and let $p = \frac{1+\epsilon}{n}$. What is the girth and circumference of the giant component in $G(n,n,p)$? What is the length of the longest cycle outside of the giant component?
 \end{question}
 
 Using some of the results of {\L}uczak \cite{Luczakcycle} on the distribution of cycles in the weakly supercritical regime in $G(n,p)$, together with Euler's formula, Dowden, Kang and Krivelevich \cite{Kang} were able to determine asymptotically the genus of $G(n,p)$ in this regime, in particular showing that whp the genus is asymptotically given by half of the excess of the giant component. It is natural to ask if a similar statement holds in the bipartite model.
 \begin{question}
 Let $\epsilon=\epsilon(n) >0$ be such that $\epsilon^3n \to \infty$ and $\epsilon = o(1)$, and let $p = \frac{1+\epsilon}{n}$. Is it true that whp the genus $g$ of $G(n,n,p)$ is such that
 \[
 g \approx \frac{1}{2}\text{excess}\left(L_1\left(G(n,n,p)\right)\right) \approx \frac{2}{3}\epsilon^3 n?
 \]
 \end{question}
 
Theorems \ref{t:trees}-\ref{t:complex} suggest an interesting relationship between the component structure of $G\left(n,n,\frac{1+\epsilon}{n}\right)$ and that of $G\left(n,n,\frac{1-\epsilon}{n}\right)$ in the weakly super- and subcritical regimes. In the case of the binomial random graph model, a much more precise relationship can be given. Given a graph $G$, let us write $G^L$ for the graph obtained by deleting a component of $G$ of maximum order, say $L$. Roughly speaking, it is known that $G^L\left(n,\frac{1+\epsilon}{n}\right)$ and $G\left(n-|L|,\frac{1-\epsilon}{n-|L|}\right)$ have approximately the same distribution. For a more detailed discussion of this phenomenon, known as the \emph{symmetry rule}, see for example \cite[Section 5.6]{Janson}. Using similar techniques as in \cite{Luczak}, which uses bounds on the excess of the giant component to prove a symmetry rule, we expect that Theorem \ref{t:excess} can be used to show a similar statement in the bipartite binomial random graph model.
\begin{conjecture}
Let $\epsilon =\epsilon(n) >0$ be such that $\epsilon^3 n \to \infty$ and $\epsilon  = o(1)$, and let $p= \frac{1+\epsilon}{n}$. If we let \[
n^{\pm} = (1 - 2 \epsilon \pm o(\epsilon))n \qquad \text{and} \qquad p^{\pm} = \frac{1 - \epsilon \pm o(\epsilon)}{n^{\pm}},
\]
then we can couple $G^L(n,n,p)$ with $G(n^-,n^-,p^-)$ and $G(n^+,n^+,p^+)$ such that whp 
\[
G(n^-,n^-,p^-) \subseteq G^L(n,n,p) \subseteq G(n^+,n^+,p^+).
\]
\end{conjecture}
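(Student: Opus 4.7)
The plan is to adapt {\L}uczak's approach to the symmetry rule for $G(n,p)$ from \cite{Luczak} to the bipartite setting, making essential use of the sharp estimates on the order and excess of the giant component from Theorems \ref{t:giant} and \ref{t:excess}. The guiding heuristic is that, conditional on the vertex set of the giant component $L$, the induced subgraph $G^L(n,n,p)$ on the complement of $V(L)$ is distributed as a bipartite random graph $G(n^\circ, n^\circ, p)$, where $n^\circ := n - |L \cap N_i|$ for each $i \in \{1,2\}$, subject only to the mild conditioning that no component of this remainder has order exceeding $|L|$.

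First I would use Theorem \ref{t:giant} to obtain $|L \cap N_i| = (1+o(1))\cdot 2\epsilon n$ for $i \in \{1,2\}$, so that $n^\circ = (1 - 2\epsilon + o(\epsilon))n$. A direct computation then yields $p n^\circ = (1+\epsilon)(1 - 2\epsilon + o(\epsilon)) = 1 - \epsilon + o(\epsilon)$, so that $G(n^\circ, n^\circ, p)$ is subcritical with effective parameter of order $\epsilon$, matching the conjectured $n^{\pm}, p^{\pm}$. Next, applying Theorem \ref{t:complex}\eqref{i:complexcomp} and Theorem \ref{t:trees}\eqref{i:treecomp2} to $G(n^\circ, n^\circ, p)$ shows that whp its largest component has order $O(\log n / \delta) \ll |L|$, so the conditioning event is a monotone decreasing event of probability $1 - o(1)$. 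A Harris-type argument via Lemma \ref{l:Harris} then shows that the conditional distribution of $G^L(n,n,p)$ is close to that of $G(n^\circ, n^\circ, p)$ when evaluated on monotone events.

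The sandwich $G(n^-, n^-, p^-) \subseteq G^L(n,n,p) \subseteq G(n^+, n^+, p^+)$ can then be obtained by a standard sprinkling argument: since $p^\pm$ and $p$ differ only by $o(\epsilon/n)$, and $n^\pm$ and $n^\circ$ differ only by $o(\epsilon n)$, there is enough slack to absorb the small discrepancy between $G^L(n,n,p)$ and $G(n^\circ, n^\circ, p)$ by adding or deleting a vanishing fraction of edges via independent coin flips, and the inclusions in both directions follow from the usual coupling that expresses $G(n,n,q_1) \cup G(n,n,q_2)$ as $G(n,n,q_1 + q_2 - q_1q_2)$.

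The main obstacle will be controlling the fluctuations of $|L|$ itself with sufficient precision: while Theorem \ref{t:giant} pins down $|L|$ to within $O(n^{2/3})$, this is on the same order as the sprinkling adjustments required between the two models in the sandwich, so the coupling must be performed conditionally on a precise structural description of $L$ rather than merely on its aggregate order. In particular, guaranteeing that $|L \cap N_1|$ and $|L \cap N_2|$ differ by only $o(\epsilon n)$ will likely require refining the moment calculations of Section \ref{s:compt} together with the excess bound from Theorem \ref{t:excess}, so as to replace the $\epsilon$-uniform bound used in Section \ref{s:giant} by a sharper concentration statement, analogous to the central limit behaviour used by {\L}uczak in the non-bipartite case.
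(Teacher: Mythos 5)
The statement you are addressing is labelled a \emph{conjecture} in the paper: the authors offer no proof, only the remark that Luczak's techniques combined with Theorem \ref{t:excess} should yield one. There is therefore no paper proof to compare against, but your sketch can be assessed against the authors' outlined plan, and it differs in a telling way: you never invoke Theorem \ref{t:excess}, which the authors single out as the essential ingredient in Luczak's symmetry rule.

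Two of the obstacles you flag are not actually obstacles, and the real gaps lie elsewhere. First, the $O(n^{2/3})$ fluctuation in $|L|$ from Theorem \ref{t:giant} is harmless: since $\epsilon^3 n \to \infty$ we have $n^{2/3}/(\epsilon n) = (\epsilon n^{1/3})^{-1} \to 0$, so the fluctuations fall strictly inside the $o(\epsilon n)$ slack built into $n^{\pm}$. Second, the last assertion of Theorem \ref{t:giant} already gives $|L\cap N_1| = (1 \pm 2\sqrt{\epsilon})|L\cap N_2|$, so with $|L\cap N_i| = O(\epsilon n)$ the imbalance is $O(\epsilon^{3/2}n) = o(\epsilon n)$; no further refinement of Section \ref{s:compt} is needed for this. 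The genuine weakness is your coupling step. Lemma \ref{l:Harris} only shows that conditioning the remainder on the decreasing event $B$ (no component of order exceeding $|L|$) pushes it stochastically \emph{downwards}, which produces the upper inclusion $G^L(n,n,p) \subseteq G(n^+,n^+,p^+)$ but says nothing about the lower one $G(n^-,n^-,p^-) \subseteq G^L(n,n,p)$. To obtain the two-sided sandwich you should instead use $\mathbb{P}(B)=1-o(1)$ to bound the total-variation distance between the conditioned and unconditioned remainder by $o(1)$ and couple them to \emph{agree} whp, while separately handling the fact that the remainder has random, generally unequal partition-class sizes $n-|L\cap N_1|$ and $n-|L\cap N_2|$ (one must check that whp both lie in $[n^-,n^+]$ before invoking monotonicity of $G(n_1,n_2,p)$ in the class sizes). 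Finally, making this rigorous uniformly over the relevant range of $V(L)$ likely requires sharper, two-sided control than the black-box whp statements of Theorems \ref{t:giant}--\ref{t:complex} supply; that finer control is exactly where Theorem \ref{t:excess} enters Luczak's argument and where your sketch is silent.
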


\section*{Acknowledgements}
The authors would like to thank the reviewers for their helpful suggestions and comments.

 \bibliographystyle{plain}
\bibliography{ComponentBehaviourReferences}

\begin{thebibliography}{10}

\bibitem{Abramowitz}
M.~Abramowitz and I.~A. Stegun, editors.
\newblock {\em Handbook of mathematical functions with formulas, graphs, and
  mathematical tables}.
\newblock Dover Publications, Inc., New York, 1992.
\newblock Reprint of the 1972 edition.

\bibitem{Alon}
N.~Alon and J.~H. Spencer.
\newblock {\em The probabilistic method}.
\newblock Wiley Series in Discrete Mathematics and Optimization. John Wiley \&
  Sons, Inc., Hoboken, NJ, fourth edition, 2016.

\bibitem{Bollobas}
B.~Bollob\'{a}s.
\newblock The evolution of random graphs.
\newblock {\em Trans. Amer. Math. Soc.}, 286(1):257--274, 1984.

\bibitem{Bollobassparse}
B.~Bollob\'{a}s.
\newblock The evolution of sparse graphs.
\newblock In {\em Graph theory and combinatorics ({C}ambridge, 1983)}, pages
  35--57. Academic Press, London, 1984.

\bibitem{BollobasBook}
B.~Bollob\'{a}s.
\newblock {\em Random graphs}, volume~73 of {\em Cambridge Studies in Advanced
  Mathematics}.
\newblock Cambridge University Press, Cambridge, second edition, 2001.

\bibitem{Bollobasphase}
B.~Bollob\'{a}s, S.~Janson, and O.~Riordan.
\newblock The phase transition in inhomogeneous random graphs.
\newblock {\em Random Structures Algorithms}, 31(1):3--122, 2007.

\bibitem{Kohayakawa}
B.~Bollob\'{a}s, Y.~Kohayakawa, and T.~{\L}uczak.
\newblock The evolution of random subgraphs of the cube.
\newblock {\em Random Structures Algorithms}, 3(1):55--90, 1992.

\bibitem{Do}
T.~A. Do, J.~Erde, and M.~Kang.
\newblock Planarity and genus of sparse random bipartite graphs.
\newblock {\em SIAM J. Discrete Math.}, 36(2):1394--1415, 2022.

\bibitem{Kang}
C.~Dowden, M.~Kang, and M.~Krivelevich.
\newblock The genus of the {E}rd{\H o}s-{R}\'{e}nyi random graph and the
  fragile genus property.
\newblock {\em Random Structures Algorithms}, 56(1):97--121, 2020.

\bibitem{E-R}
P.~Erd\H{o}s and A.~R\'{e}nyi.
\newblock On random graphs. {I}.
\newblock {\em Publ. Math. Debrecen}, 6:290--297, 1959.

\bibitem{Harris}
T.~E. Harris.
\newblock A lower bound for the critical probability in a certain percolation
  process.
\newblock {\em Proc. Cambridge Philos. Soc.}, 56:13--20, 1960.

\bibitem{Janson}
S.~Janson, T.~{\L}uczak, and A.~Rucinski.
\newblock {\em Random graphs}.
\newblock Wiley-Interscience Series in Discrete Mathematics and Optimization.
  Wiley-Interscience, New York, 2000.

\bibitem{Mohar}
Y.~Jing and B.~Mohar.
\newblock The genus of a random bipartite graph.
\newblock {\em Canad. J. Math.}, 72(6):1607--1623, 2020.

\bibitem{Johansson}
T.~Johansson.
\newblock The giant component of the random bipartite graph.
\newblock 2012.
\newblock Master's Thesis.

\bibitem{Kangmultitype}
M.~Kang, C.~Koch, and A.~Pach\'{o}n.
\newblock The phase transition in multitype binomial random graphs.
\newblock {\em SIAM J. Discrete Math.}, 29(2):1042--1064, 2015.

\bibitem{Luczak}
T.~{\L}uczak.
\newblock Component behavior near the critical point of the random graph
  process.
\newblock {\em Random Structures Algorithms}, 1(3):287--310, 1990.

\bibitem{Luczakcycle}
T.~{\L}uczak.
\newblock Cycles in a random graph near the critical point.
\newblock {\em Random Structures Algorithms}, 2(4):421--439, 1991.

\bibitem{M67}
J.~W. Moon.
\newblock Enumerating labelled trees.
\newblock In {\em Graph {T}heory and {T}heoretical {P}hysics}, pages 261--272.
  Academic Press, London, 1967.

\bibitem{Scoins}
H.~I. Scoins.
\newblock The number of trees with nodes of alternate parity.
\newblock {\em Proc. Cambridge Philos. Soc.}, 58:12--16, 1962.

\bibitem{Spencer}
J.~Spencer.
\newblock {\em Asymptopia}, volume~71 of {\em Student Mathematical Library}.
\newblock American Mathematical Society, Providence, RI, 2014.
\newblock With Laura Florescu.

\end{thebibliography}

\begin{appendix}
	
\section{Proof of Lemma~\ref{l:difference}}\label{s: Appendix A} 
Let us write
$$g(y):=\frac{1}{(k^2-y^2)^m}\left(\frac{k-y}{k+y}\right)^{y}\exp\left(-\frac{y^2}{2n}\right).$$
If we let $h(y):=\log \left(g(y)\right)=-m\log(k^2-y^2)+y\log\left(\frac{k-y}{k+y}\right)-\frac{y^2}{2n}$, then
$$h'(y)=\frac{2my}{k^2-y^2}+\log\left(\frac{k-y}{k+y}\right)-\frac{2ky}{k^2-y^2}-\frac{y}{n},$$
$$h''(y)=\frac{2mk^2-4k^3+2my^2}{(k^2-y^2)^2}-\frac{1}{n},$$
$$h'''(y)=\frac{2y(6mk^4-4mk^2y^2-2my^4-8k^5+8k^3y^2)}{(k^2-y^2)^4}.$$
Note that $0$ is a solution of $h'(y)=0$ and, since $m$ is fixed and $h''(y)<0$ on $[-L-1,L+1]$, $0$ is the unique solution on $[-L-1,L+1]$. Hence, $h(y)$ is increasing on $[-L-1,0]$ and deceasing on $[0,L+1]$, and this is also true for $g(y)$.

Therefore, by Lemma \ref{l:spencer} we can bound the difference between  
$$I:=\int_{-L}^{L}g(y) dy,$$
and $S$ as $|S-I| \leq 12g(0)$.
We will later show that $I = \omega\left(g(0)\right)$, and hence $S \approx I$.

In order to estimate $I$, we approximate $g$ by a Gaussian function. By the mean value form of the remainder in Taylor's theorem, for any $y\in [-L,L]$ there is a real number $z$ between $0$ and $y$ such that
$$h(y)=h(0)+\frac{h''(0)}{2}y^2+\frac{h'''(z)}{6}y^3.$$
Note that, if $|z| = o(k)$, then $|h'''(z)|=o\left(\frac{1}{k^2}\right)$. Therefore, for any $|y|\leq k^\frac{3}{5}$ we have 
$$h(y)=h(0)+\frac{h''(0)}{2}y^2+o\left(\frac{y^3}{k^2}\right)=h(0)+\frac{h''(0)}{2}y^2+o(1).$$

Hence, if we let $R = \min \{ k^{\frac{3}{5}}, L\}$ then
\[
I= \int_{-R}^{R} \exp \left(h(0) + \frac{h''(0)}{2} y^2 + o(1)\right) dy + \int_{L \geq |y|\geq R} e^{h(y)} dy.
\]
The first integral we can evaluate in a standard manner as
\begin{align*}
    \int_{-R}^{R} \exp \left(h(0) + \frac{h''(0)}{2} y^2 + o(1)\right) dy &\approx \int_{-R}^{R} \exp \left(h(0) + \frac{h''(0)}{2}y^2\right) dy\approx e^{h(0)}\int_{-\infty}^{\infty} \exp \left(\frac{h''(0)}{2} y^2\right)dy \\&\approx \sqrt{\frac{2 \pi}{|h''(0)|}}e^{h(0)} = \sqrt{\frac{\pi}{2}}k^{\frac{1}{2}-2m},
\end{align*}
where we used that $h''(0)=\frac{2m}{k^2} - \frac{4}{k}-\frac{1}{n}\approx -\frac{4}{k}$ and also that $R = \omega(1)$.

If $R = L$, then the second integral is $0$, and so we may assume that $R = k^{\frac{3}{5}}$. In order to bound the second integral we note that all the terms in $h(y)$ are negative, and in particular if $|y| \leq L \leq k$
\[
\log\left(\frac{k-y}{k+y}\right) = \log\left( 1 - \frac{2y}{k+y}\right) \leq -\frac{2y}{k+y}.
\]
Hence, if $L \geq |y| \geq R$, then
\[
h(y) \leq y \log\left(\frac{k-y}{k+y}\right) \leq -\frac{y^2}{k+y} \leq -\frac{k^{\frac{1}{5}}}{2}.
\]

It follows that
\[
\int_{L \geq |y|\geq R}  e^{h(y)} dy \leq 2 \int_{k^{\frac{3}{5}}}^{\infty} \exp \left(-\frac{y^{\frac{1}{5}}}{2}\right) dy =O\left(e^{-\frac{1}{2}k^\frac{3}{25}}k^\frac{12}{25}\right)= o\left(k^{\frac{1}{2}-2m}\right).
\]

Hence, $I \approx \sqrt{\frac{\pi}{2}}k^{\frac{1}{2}-2m}$ and, noting that $g(0) = k^{-2m}= o(I)$, the result follows.
\qed
\section{Proof of Lemma~\ref{l:Var}}\label{s:Append B} 	
Recall that we write $X(i,j,-1)$ for the number of tree components with $i$ vertices in $N_1$ and $j$ vertices in $N_2$, and let 
\[
\Lambda_k = \left\{ (i,j) \in \mathbb{N}^2 \colon i+j = k \text{ and } |i-j| < \epsilon^{\frac{1}{4}}  \sqrt{n}\right\}.
\]

Then,
     \[
    Z_a = \sum_{k=1}^{\Tilde{k}} k^a \sum_{(i,j)\in \Lambda_k} X(i,j,-1),
    \]
and so
     \begin{align*}
         \mathbb{E}\left(Z_1^2\right) &= \sum_{k_1=1}^{\Tilde{k}} \sum_{k_2=1}^{\Tilde{k}} k_1 k_2 \sum_{(i,j)\in \Lambda_{k_1}} \sum_{(s,t) \in \Lambda_{k_2}} \mathbb{E}\left( X(i,j,-1)X(s,t,-1)\right).
     \end{align*}
Let us write $\mu_{i,j} = \mathbb{E}\left(X(i,j,-1)\right)$. Then, when $(i,j) \neq (s,t)$ we have, by comparison with \eqref{e:treescount},
\begin{align*}
\mathbb{E}\left(X(i,j,-1)X(s,t,-1)\right) &= \binom{n}{i}\binom{n}{j}\binom{n-i}{s}\binom{n-j}{t} C(i,j,-1)C(s,t,-1) p^{k_1 +k_2-2}(1-p)^{n(k_1+k_2) - ij - st - sj - ti - k_1 - k_2 +2}\\
&= \mu_{i,j}\mu_{s,t}\frac{(n)_{i+s}}{(n)_i(n)_s}\frac{(n)_{j+t}}{(n)_j(n)_t}(1-p)^{-it-sj},
\end{align*}
and when $(i,j) = (s,t)$ we have
\[
\mathbb{E}\left( X(i,j,-1)^2\right) = \mu_{i,j} + \mu_{i,j}^2\frac{(n)_{2i}}{(n)^2_i}\frac{(n)_{2j}}{(n)^2_j}(1-p)^{-2ij}.
\]

Now, it can be seen that if $0\leq x\leq y \leq 1$, then
\[
1-y \leq (1-x)e^{x-y}, 
\]
and so
\begin{align}\label{e:fallingquotient}
\frac{(n)_{i+s}}{(n)_i(n)_s} = \prod_{m=0}^{i-1} \frac{1-\frac{s+m}{n}}{1-\frac{m}{n}} \leq \text{exp}\left(\sum_{m=0}^{i-1} \frac{m}{n} - \frac{s+m}{n} \right) = \exp \left(-\frac{is}{n}\right),
\end{align}
and a similar bound holds for $\frac{(n)_{j+t}}{(n)_j(n)_t}$. Hence, using \eqref{e:fallingquotient} and the fact that $(1-p)^x \leq e^{-px}$ for any positive $p$ and $x$, we have
\begin{align}
\mathbb{E}\left(Z_1^2\right) &= \sum_{k_1=1}^{\Tilde{k}} \sum_{k_2=1}^{\Tilde{k}}k_1 k_2  \sum_{(i,j)\in \Lambda_{k_1}}\sum_{(s,t) \in \Lambda_{k_2}} \mathbb{E}\left(X(i,j,-1)X(s,t,-1)\right) \nonumber\\
&= \mathbb{E}(Z_{2}) + \sum_{k_1=1}^{\Tilde{k}} \sum_{k_2=1}^{\Tilde{k}} k_1 k_2  \sum_{(i,j)\in \Lambda_{k_1}} \sum_{(s,t) \in \Lambda_{k_2}} \mu_{i,j}\mu_{s,t}\frac{(n)_{i+s}}{(n)_i(n)_s}\frac{(n)_{j+t}}{(n)_j(n)_t}(1-p)^{-it-sj}\nonumber\\
&\leq  \mathbb{E}(Z_{2}) + \sum_{k_1=1}^{\Tilde{k}} \sum_{k_2=1}^{\Tilde{k}} k_1 k_2 \sum_{(i,j)\in \Lambda_{k_1}} \sum_{(s,t) \in \Lambda_{k_2}} \mu_{i,j}\mu_{s,t} \exp\left(-\frac{is}{n}-\frac{jt}{n}+(it+sj)\left(\frac{1+\epsilon}{n}\right)\right)\nonumber\\
&= \mathbb{E}(Z_{2}) + \sum_{k_1=1}^{\Tilde{k}} \sum_{k_2=1}^{\Tilde{k}} k_1 k_2 \sum_{(i,j)\in \Lambda_{k_1}} \sum_{(s,t) \in \Lambda_{k_2}} \mu_{i,j}\mu_{s,t} \exp\left(\frac{(i-j)(t-s)}{n}+(it+sj)\frac{\epsilon}{n}\right)\nonumber\\
&\leq \mathbb{E}(Z_{2}) + \sum_{k_1=1}^{\Tilde{k}} \sum_{k_2=1}^{\Tilde{k}} k_1 k_2 \sum_{(i,j)\in \Lambda_{k_1}} \sum_{(s,t) \in \Lambda_{k_2}} \mu_{i,j}\mu_{s,t}\exp\left(\frac{(i-j)(t-s)}{n}+\frac{2\epsilon k_1 k_2}{n}\right). \label{e:Z2calc}
\end{align}

Now, since we are only looking at $\epsilon$-uniform components, if $(i,j) \in \Lambda_{k_1}$ and  $(s,t) \in \Lambda_{k_2}$, then 
  \[
  \frac{(i-j)(t-s)}{n} \leq \sqrt{\epsilon} = o(1).
 \] 
Hence, since $0 \leq \frac{2\epsilon k_1 k_2}{n} \leq \frac{2}{3}$, $e^x \leq 1+x+x^2$ for $|x|\leq 1$ and $e^x \leq 1+2x$ for $0 \leq x \leq 1$ it follows that
 \begin{align}
\exp\left(\frac{(i-j)(t-s)}{n}+\frac{2\epsilon k_1 k_2}{n}\right) &\leq \left( 1 + \frac{4\epsilon k_1 k_2}{n}\right) \left(1 + \frac{(i-j)(t-s)}{n} + \frac{(i-j)^2(t-s)^2}{n^2}\right) \nonumber \\
&\leq 1 +\frac{4\epsilon k_1 k_2}{n} + \frac{(i-j)(t-s)}{n}\left(1 +\frac{4\epsilon k_1 k_2}{n}\right) + \frac{(i-j)^2(t-s)^2}{n^2}\left(1 +\frac{4\epsilon k_1 k_2}{n}\right) \nonumber\\
&\leq 1 +\frac{4\epsilon k_1 k_2}{n} + \frac{(i-j)(t-s)}{n}\left(1 +\frac{4\epsilon k_1 k_2}{n}\right) + \frac{3(i-j)^2(t-s)^2}{n^2}. \label{e:exponentbound}
 \end{align}
 
So, from \eqref{e:Z2calc} and \eqref{e:exponentbound} we can conclude that
\begin{align}
\mathbb{E}\left(Z^2_1\right)&\leq \mathbb{E}(Z_{2}) + \sum_{k_1=1}^{\Tilde{k}} \sum_{k_2=1}^{\Tilde{k}} k_1 k_2 \sum_{(i,j)\in \Lambda_{k_1}} \sum_{(s,t) \in \Lambda_{k_2}} \mu_{i,j}\mu_{s,t} \nonumber\\
&\hspace{2cm}\cdot\left(1 +\frac{4\epsilon k_1 k_2}{n} + \frac{(i-j)(t-s)}{n}\left(1 +\frac{4\epsilon k_1 k_2}{n}\right) + \frac{3(i-j)^2(t-s)^2}{n^2}\right). \label{e:fourterms}
\end{align}
We split the sum in \eqref{e:fourterms} into four terms and consider them separately. The first three terms are relatively easy to bound.

Firstly, we have that
\begin{equation}\label{e:firstterm}
\sum_{k_1=1}^{\Tilde{k}} \sum_{k_2=1}^{\Tilde{k}} k_1 k_2 \sum_{(i,j)\in \Lambda_{k_1}} \sum_{(s,t) \in \Lambda_{k_2}} \mu_{i,j}\mu_{s,t} = \mathbb{E}(Z_1)^2.
\end{equation}
The second term can be seen to be
\begin{equation}\label{e:secondterm}
\frac{4\epsilon}{n} \sum_{k_1=1}^{\Tilde{k}} \sum_{k_2=1}^{\Tilde{k}} k_1^{2} k_2^{2} \sum_{(i,j)\in \Lambda_{k_1}} \sum_{(s,t) \in \Lambda_{k_2}} \mu_{i,j}\mu_{s,t} = \frac{4\epsilon}{n} \mathbb{E}(Z_{2})^2.
\end{equation}
Thirdly, since $\mu_{i,j}$ is symmetric in $i$ and $j$ and $\mu_{s,t}$ is symmetric in $s$ and $t$ and $(i-j)$ and $(s-t)$ are antisymmetric, it follows that
\begin{equation}\label{e:thirdterm}
\sum_{k_1=1}^{\Tilde{k}} \sum_{k_2=1}^{\Tilde{k}} k_1 k_2\left(1 +\frac{4\epsilon k_1 k_2}{n}\right) \sum_{(i,j)\in \Lambda_{k_1}} \sum_{(s,t) \in \Lambda_{k_2}} \mu_{i,j}\mu_{s,t}\frac{(i-j)(t-s)}{n} = 0.
\end{equation}

For the fourth term, we have to be a bit more careful. Let us consider 
\begin{align*}
&\sum_{k_1=1}^{\Tilde{k}} \sum_{k_2=1}^{\Tilde{k}} k_1 k_2 \sum_{(i,j)\in \Lambda_{k_1}} \sum_{(s,t) \in \Lambda_{k_2}} \mu_{i,j}\mu_{s,t}\frac{(i-j)^2(t-s)^2}{n^2}\\
 = &  \left(\sum_{k_1=1}^{\Tilde{k}} k_1\sum_{(i,j)\in \Lambda_{k_1}} \mu_{i,j}\frac{(i-j)^2}{n}\right) \left(\sum_{k_2=1}^{\Tilde{k}} k_2 \sum_{(s,t) \in \Lambda_{k_2}} \mu_{s,t}\frac{(t-s)^2}{n}\right) \\
=&\, S^2,
\end{align*}
where
\[
S := \sum_{k=1}^{\tilde{k}} k \sum_{(i,j)\in \Lambda_{k}} \mu_{i,j}\frac{(i-j)^2}{n}.
\]

Using \eqref{e:treeexpectation}, we see that, since $\tilde{k} \leq n^{\frac{2}{3}}$, then
\begin{align}
S&=O\left(\sum_{k=1}^{\tilde{k}} ke^{-\frac{\delta k}{2}} \sum_{(i,j)\in \Lambda_{k}} \frac{(i-j)^2}{(ij)^{\frac{3}{2}}} \left(\frac{i}{j}\right)^{j-i} \exp\left(-\frac{(i-j)^2}{2n}\right) \right) \nonumber= O\left(\sum_{k=1}^{\tilde{k}} ke^{-\frac{\delta k}{2}} \sum_{i+j=k} \frac{(i-j)^2}{(ij)^{\frac{3}{2}}} \left(\frac{i}{j}\right)^{j-i} \exp\left(-\frac{(i-j)^2}{2n}\right) \right) \nonumber\\
&= O\left(\sum_{k=1}^{\tilde{k}} ke^{-\frac{\delta k}{2}} \sum_{d=-k}^k \frac{d^2}{(k^2-d^2)^{\frac{3}{2}}} \left(\frac{k-d}{k+d}\right)^{d} \exp\left(-\frac{d^2}{2n}\right) \right). \label{e:S}
\end{align}

Firstly, we note that for small $k$ the sum is negligible. Indeed, 
\begin{equation}\label{e:smallk}
\sum_{k=1}^{\epsilon^{-\frac{2}{5}}} ke^{-\frac{\delta k}{2}} \sum_{d=-k}^k \frac{d^2}{(k^2-d^2)^{\frac{3}{2}}} \left(\frac{k-d}{k+d}\right)^{d} \exp\left(-\frac{d^2}{2n}\right) \leq \sum_{k=1}^{\epsilon^{-\frac{2}{5}}} k \sum_{d=-k}^k d^2 \leq \epsilon^{-2} = O\left(\sqrt{\frac{n}{\epsilon}}\right).
\end{equation}

For $k \geq \epsilon^{-\frac{2}{5}}$, we split the inner sum up further into two ranges
\[
T_1 := \sum_{|d| \leq k^{\frac{3}{5}}} \frac{d^2}{(k^2-d^2)^{\frac{3}{2}}} \left(\frac{k-d}{k+d}\right)^{d} \exp\left(-\frac{d^2}{2n}\right) \qquad \text{and} \qquad T_2 := \sum_{k \geq |d| \geq k^{\frac{3}{5}}} \frac{d^2}{(k^2-d^2)^{\frac{3}{2}}} \left(\frac{k-d}{k+d}\right)^{d} \exp\left(-\frac{d^2}{2n}\right).
\]

By the same argument as in Lemma \ref{l:difference}, we see that, since $k = \omega(1)$,
\begin{equation}\label{e:T_1}
T_2 \lesssim \int_{k^{\frac{3}{5}}}^\infty y^2 \exp\left(-\frac{y^{\frac{1}{5}}}{2}\right) dy=O\left(e^{-\frac{1}{2}k^\frac{3}{25}}k^\frac{42}{25}\right)= o\left(k^{-\frac{5}{4}}\right).
\end{equation}

Furthermore, we can bound $T_1$ naively, using H\"{o}lder's inequality  and Lemma \ref{l:difference}, to obtain
\begin{align}
T_1 &= \sum_{|d| \leq k^{\frac{3}{5}}} \frac{d^2}{(k^2-d^2)^{\frac{3}{2}}} \left(\frac{k-d}{k+d}\right)^{d} \exp\left(-\frac{d^2}{2n}\right) \nonumber\\
&\leq \sqrt{ \sum_{|d| \leq k^{\frac{3}{5}}} d^4} \sqrt{ \sum_{|d| \leq k^{\frac{3}{5}}} \frac{1}{(k^2-d^2)^{3}} \left(\frac{k-d}{k+d}\right)^{2d} \exp\left(-\frac{d^2}{2n}\right)}\nonumber \\
&\leq  \sqrt{ \sum_{|d| \leq k^{\frac{3}{5}}} d^4} \sqrt{ \sum_{|d| \leq k^{\frac{3}{5}}} \frac{1}{(k^2-d^2)^{3}} \left(\frac{k-d}{k+d}\right)^{d} \exp\left(-\frac{d^2}{2n}\right)}\nonumber\\
&= O\left(\sqrt{k^3} \sqrt{ k^{-\frac{11}{2}}}\right)= O\left(k^{-\frac{5}{4}}\right). \label{e:T_2}
\end{align}

Therefore, by \eqref{e:T_1} and \eqref{e:T_2}, we have 
\begin{align}
   & \sum_{k=\epsilon^{-\frac{2}{5}}}^{\tilde{k}} ke^{-\frac{\delta k}{2}} \sum_{d=-k}^k \frac{d^2}{(k^2-d^2)^{\frac{3}{2}}} \left(\frac{k-d}{k+d}\right)^{d} \exp\left(-\frac{d^2}{2n}\right)=O\left(\sum_{k=\epsilon^{-\frac{2}{5}}}^{\tilde{k}}k^{-\frac{1}{4}}e^{-\frac{\delta k}{2}}\right)=O\left(\int_{y=1}^\infty y^{-\frac{1}{4}}e^{-\frac{\delta y}{2}}dy\right)\nonumber\\&=
   O\left(\epsilon^{-\frac{3}{2}} \int_{x=\frac{\epsilon^2}{4}}^{\infty} x^{-\frac{1}{4}}e^{-x} dx\right)=O\left(\epsilon^{-\frac{3}{2}}\right).\label{e:bigsumofS}
\end{align}

Hence,  by \eqref{e:S}, \eqref{e:smallk}, and \eqref{e:bigsumofS} we see that
\begin{align*}
    S=O\left(\sqrt{\frac{n}{\epsilon}}+\epsilon^{-\frac{3}{2}}\right)=O\left(\sqrt{\frac{n}{\epsilon}}\right),
\end{align*}

and so
\begin{equation}
\sum_{k_1=1}^{\Tilde{k}} \sum_{k_2=1}^{\Tilde{k}} k_1 k_2 \sum_{(i,j)\in \Lambda_{k_1}} \sum_{(s,t) \in \Lambda_{k_2}} \mu_{i,j}\mu_{s,t}\frac{(i-j)^2(t-s)^2}{n^2} = S^2 = O\left(\frac{n}{\epsilon}\right). \label{e:fourthterm}
\end{equation}

Hence, by \eqref{e:fourterms}, \eqref{e:firstterm}, \eqref{e:secondterm}, \eqref{e:thirdterm} and \eqref{e:fourthterm} we can conclude that
\begin{align}\label{e:almostthere}
\text{Var}(Z_1) &\leq \mathbb{E}(Z_{2}) + \frac{4 \epsilon}{n} \mathbb{E}(Z_2)^2 + O\left(\frac{n}{\epsilon}\right).
      \end{align}
      
Using \eqref{e:treeexpectation} and Lemma \ref{l:difference}, we can bound
       \begin{align*}
      \mathbb{E}(Z_2) &\leq  \sum_{k=1}^{n^\frac{2}{3}}k^2 \sum_{(i,j)\in \Lambda_k}\mu_{i,j} = O\left( n \sum_{k=1}^{n^\frac{2}{3}}k^2 e^{-\frac{\delta k}{2}} \sum_{(i,j)\in \Lambda_{k}} \frac{1}{(ij)^{\frac{3}{2}}} \left(\frac{i}{j}\right)^{j-i} \exp\left(-\frac{(i-j)^2}{2n}\right) \right)\\
      &= O\left( n \sum_{k=1}^{n^\frac{2}{3}}\frac{1}{\sqrt{k}}e^{-\frac{\delta k}{2}} \right)= O\left( n \int_{y=1}^{\infty}\frac{1}{\sqrt{y}}e^{-\frac{\delta y}{2}} dy \right)= O\left( \frac{n}{\sqrt{\delta}} \int_{x=\frac{\delta}{2}}^{\infty}\frac{1}{\sqrt{x}}e^{-x} dx \right)= O\left(\frac{n}{\epsilon}\right).
     \end{align*}
     
Finally, putting this together with \eqref{e:almostthere}, we can conclude that
\[
\text{Var}(Z_1) \leq O\left(\frac{n}{\epsilon}\right) + \frac{4 \epsilon}{n} O\left(\frac{n^2}{\epsilon^2}\right) + O\left(\frac{n}{\epsilon}\right) = O\left(\frac{n}{\epsilon}\right).
\]

\section{Proof of Theorem~\ref{t:subcriticalsmallcomponents}}\label{s:Appendix C}
A standard argument tells us that, for a fixed vertex $v$ the order of the component in $G(n,n,p)$ containing $v$ is stochastically dominated by the order of the component of the root in a random subgraph of $T_n$, the infinite $(n+1)$-regular rooted tree, where we include each edge independently with probability $p$.

It is shown in \cite[Corollary 3]{Kohayakawa} that if we let $t(k,n)$ be the number of subtrees of $T_n$ that contain the root and have order $k$ and $k=\omega(1)$, then
\[
t(k,n) \approx \frac{1}{\sqrt{2 \pi} k^{\frac{3}{2}}} n^{k-1} \left( \frac{n}{n-1}\right)^{k(n-1) +2}.
\]
Hence, the probability that the component of the root in a random subgraph of $T_n$ has order $k$ is given by
\begin{align*}
P_k(n,p) &=t(k,n)p^{k-1}(1-p)^{k(n-1)+2}\approx \frac{(pn)^{k-1}}{\sqrt{2 \pi} k^{1.5}}\left( \frac{n(1-p)}{n-1}\right)^{k(n-1) +2}.
\end{align*}
It follows that, if we let $p = \frac{1-\epsilon}{n}$, then
\begin{align*}
P_k(n,p) &\approx \frac{1}{\sqrt{2 \pi} k^{1.5}} (1 - \epsilon)^{k-1}\left( 1 + \frac{\epsilon}{n-1}\right)^{{k(n-1) +2}}\leq  \frac{1}{\sqrt{2 \pi} k^{1.5}} (1 - \epsilon)^{k-1} \text{exp}\left(\epsilon k + O\left(\frac{\epsilon}{n}\right)\right)\lesssim \frac{1}{\sqrt{2 \pi} k^{1.5}}\left( (1 - \epsilon)e^{\epsilon}\right)^{k-1}.
\end{align*}

Furthermore, it is clear by comparison with a branching process that with probability $1$ the component of the root is finite, and hence $\sum_k P_k(n,p) = 1$. It follows that the probability that a vertex in $G(n,n,p)$ belongs to a component of order larger than $k_0 \in \mathbb{N}$ is equal to
\[
\sum_{k \geq k_0} P_k(n,p).
\]
Hence, if we let $Y_{\geq k_0}$ be the number of vertices in $G(n,n,p)$ which belong to a component of order larger than $k_0$, then we have that
\begin{align*}
\mathbb{E}(Y_{\geq k_0}) &= n \sum_{k \geq k_0} P_k(n,p)\lesssim  n \sum_{k \geq k_0}\frac{1}{\sqrt{2 \pi} k^{1.5}}\left( (1 - \epsilon)e^{\epsilon}\right)^{k-1}\lesssim n k_0^{-\frac{3}{2}} \frac{ ((1-\epsilon)e^{\epsilon})^{k_0}}{1- (1-\epsilon)e^{\epsilon}}.
\end{align*}
However, $(1-\epsilon)e^{\epsilon} = 1 - \frac{\epsilon^2}{2} + O(\epsilon^3)$ and so 
\[
\mathbb{E}(Y_{\geq k_0}) \lesssim  n k_0^{-\frac{3}{2}} \frac{4}{\epsilon^2}.
\]

Taking $k_0 = \sqrt{\frac{n}{3 \epsilon}}$, we see that 
\[
\mathbb{E}\left(Y_{\geq \sqrt{\frac{n}{3 \epsilon}}}\right) = O\left( \frac{n^{\frac{1}{4}}}{\epsilon^{\frac{5}{4}}}\right) = o\left( \sqrt{\frac{n}{\epsilon}} \right).
\]	
\end{appendix}
\end{document}